\pdfoutput=1
\documentclass[11pt]{article}
\usepackage{authblk}
\usepackage[toc,page]{appendix}
\usepackage[top=2cm, bottom=2cm, left=2cm, right=2cm]{geometry}
\usepackage{centernot}

\usepackage{color}
\usepackage{helvet}         
\usepackage{courier}        
\usepackage{type1cm}        

\usepackage{framed}
\usepackage{tikz}
\usepackage{makeidx}         
\usepackage{graphicx}        
\usepackage{multicol}        
\usepackage[bottom]{footmisc}

\usepackage{amsmath}
\usepackage{amssymb}
\usepackage{bbold}
\usepackage{amsthm}
\usepackage{subcaption}
\usepackage{sidecap}
\usepackage{floatrow}
\usepackage{pdflscape}
\usepackage{pdflscape}
\usepackage{comment}
\usepackage[font=small]{caption}
\usepackage{enumitem}
\usepackage{esint}


\usepackage{scalerel}

\newtheorem{theorem}{Theorem}
\newtheorem{corollary}[theorem]{Corollary}
\newtheorem{lemma}[theorem]{Lemma}

\newtheorem{proposition}[theorem]{Proposition}

\theoremstyle{remark}

\newtheorem{remark}[theorem]{\bf Remark}

\numberwithin{theorem}{section}
\numberwithin{question}{section}
\numberwithin{figure}{section}
\numberwithin{equation}{section}

\allowdisplaybreaks[4]






\begin{document}

\title{Conformal covariance of connection probabilities \\ in the 2D critical FK-Ising model}
\bigskip{}
\author[1]{Federico Camia}
\author[2]{Yu Feng}

\affil[1]{NYU Abu Dhabi, UAE \& Courant Institute, USA}
\affil[2]{Tsinghua University, China}

\date{}


\global\long\def\CR{\mathrm{CR}}
\global\long\def\ST{\mathrm{ST}}
\global\long\def\SF{\mathrm{SF}}
\global\long\def\cov{\mathrm{cov}}
\global\long\def\dist{\mathrm{dist}}
\global\long\def\SLE{\mathrm{SLE}}
\global\long\def\hSLE{\mathrm{hSLE}}
\global\long\def\CLE{\mathrm{CLE}}
\global\long\def\GFF{\mathrm{GFF}}
\global\long\def\inte{\mathrm{int}}
\global\long\def\ext{\mathrm{ext}}
\global\long\def\inrad{\mathrm{inrad}}
\global\long\def\outrad{\mathrm{outrad}}
\global\long\def\dimH{\mathrm{dim}}
\global\long\def\capa{\mathrm{cap}}
\global\long\def\diam{\mathrm{diam}}
\global\long\def\free{\mathrm{free}}
\global\long\def\Dist{\mathrm{Dist}}
\global\long\def\hF{{}_2\mathrm{F}_1}
\global\long\def\simple{\mathrm{simple}}
\global\long\def\even{\mathrm{even}}
\global\long\def\odd{\mathrm{odd}}
\global\long\def\st{\mathrm{ST}}
\global\long\def\usf{\mathrm{USF}}
\global\long\def\Leb{\mathrm{Leb}}
\global\long\def\LP{\mathrm{LP}}
\global\long\def\coulomb{\LH}
\global\long\def\coulombnew{\LG}
\global\long\def\kfunc{p}
\global\long\def\OO{\mathcal{O}}
\global\long\def\parti{\mathbf{Q}}
\global\long\def\rad{\mathrm{rad}}
\global\long\def\IsingHfree{\mathbb{E}_{\mathbb{H}}^{(a,\mathrm{f})}}
\global\long\def\oH{\overline{\mathbb{H}}}
\global\long\def\aIsingHplus{\widehat{\mathbb{E}}_{\mathbb{H}}^{\dot{\boldsymbol{x}}^a}}

\global\long\def\eps{\epsilon}
\global\long\def\ov{\overline}
\global\long\def\U{\mathbb{U}}
\global\long\def\T{\mathbb{T}}
\global\long\def\HH{\mathbb{H}}
\global\long\def\LA{\mathcal{A}}
\global\long\def\LB{\mathcal{B}}
\global\long\def\LC{\mathcal{C}}
\global\long\def\LD{\mathcal{D}}
\global\long\def\LF{\mathcal{F}}
\global\long\def\LK{\mathcal{K}}
\global\long\def\LE{\mathcal{E}}
\global\long\def\LG{\mathcal{G}}
\global\long\def\LI{\mathcal{I}}
\global\long\def\LJ{\mathcal{J}}
\global\long\def\LL{\mathcal{L}}
\global\long\def\LM{\mathcal{M}}
\global\long\def\LN{\mathcal{N}}
\global\long\def\LQ{\mathcal{Q}}
\global\long\def\LR{\mathcal{R}}
\global\long\def\LT{\mathcal{T}}
\global\long\def\LS{\mathcal{S}}
\global\long\def\LU{\mathcal{U}}
\global\long\def\LV{\mathcal{V}}
\global\long\def\LW{\mathcal{W}}
\global\long\def\LX{\mathcal{X}}
\global\long\def\LY{\mathcal{Y}}
\global\long\def\PartF{\mathcal{Z}}
\global\long\def\LH{\mathcal{H}}
\global\long\def\LJ{\mathcal{J}}
\global\long\def\R{\mathbb{R}}
\global\long\def\C{\mathbb{C}}
\global\long\def\N{\mathbb{N}}
\global\long\def\Z{\mathbb{Z}}
\global\long\def\E{\mathbb{E}}
\global\long\def\PP{\mathbb{P}}
\global\long\def\QQ{\mathbb{Q}}
\global\long\def\A{\mathbb{A}}
\global\long\def\one{\mathbb{1}}
\global\long\def\bn{\mathbf{n}}
\global\long\def\MR{MR}
\global\long\def\cond{\,|\,}
\global\long\def\la{\langle}
\global\long\def\ra{\rangle}
\global\long\def\tree{\Upsilon}
\global\long\def\prob{\mathbb{P}}
\global\long\def\hm{\mathrm{Hm}}
\global\long\def\cross{\mathrm{Cross}}

\global\long\def\sf{\mathrm{SF}}
\global\long\def\wr{\varrho}

\global\long\def\Im{\operatorname{Im}}
\global\long\def\Re{\operatorname{Re}}

\global\long\def\ud{\mathrm{d}}
\global\long\def\pder#1{\frac{\partial}{\partial#1}}
\global\long\def\pdder#1{\frac{\partial^{2}}{\partial#1^{2}}}
\global\long\def\der#1{\frac{\ud}{\ud#1}}

\global\long\def\bZnn{\mathbb{Z}_{\geq 0}}

\global\long\def\Vfunc{\LG}
\global\long\def\gfunc{g^{(\rr)}}
\global\long\def\hfunc{h^{(\rr)}}

\global\long\def\SimplexInt{\rho}
\global\long\def\CubeInt{\widetilde{\rho}}

\global\long\def\ii{\mathfrak{i}}
\global\long\def\rr{\mathfrak{r}}
\global\long\def\chamber{\mathfrak{X}}
\global\long\def\Wchamber{\mathfrak{W}}

\global\long\def\SimplexIntKappa8{\SimplexInt}

\global\long\def\nested{\boldsymbol{\underline{\Cap}}}
\global\long\def\unnested{\boldsymbol{\underline{\cap\cap}}}
\global\long\def\unnested{\boldsymbol{\underline{\cap\cap}}}

\global\long\def\acycle{\vartheta}
\global\long\def\bcycle{\tilde{\acycle}}
\global\long\def\Gloop{\Theta}

\global\long\def\metric{\mathrm{dist}}

\global\long\def\adj#1{\mathrm{adj}(#1)}

\global\long\def\bs{\boldsymbol}

\global\long\def\edge#1#2{\langle #1,#2 \rangle}
\global\long\def\graph{G}

\newcommand{\conn}{\vartheta_{\scaleobj{0.7}{\mathrm{RCM}}}}
\newcommand{\hatconn}{\widehat{\vartheta}_{\mathrm{RCM}}}
\newcommand{\realpt}{\smash{\mathring{x}}}
\newcommand{\corrind}{\LC}
\newcommand{\bssymb}{\pi}
\newcommand{\PRCM}{\mu}
\newcommand{\coeff}{p}
\newcommand{\MainConst}{C}

\global\long\def\removeLink{/}

\maketitle

\begin{abstract}
    We study connection probabilities between vertices of the square lattice for the critical random-cluster (FK) model with cluster weight 2, which is related to the critical Ising model. We consider the model on the plane and on domains conformally equivalent to the upper half-plane. We prove that, when appropriately rescaled, the connection probabilities between vertices in the domain or on the boundary have nontrivial limits, as the mesh size of the square lattice is sent to zero, and that those limits are conformally covariant. This provides an important step in the proof of the Delfino-Viti conjecture for FK-Ising percolation as well as an alternative proof of the conformal covariance of the Ising spin correlation functions. In an appendix, we also derive new exact formulas for some Ising boundary spin correlation functions.
\end{abstract}

\noindent\textbf{Keywords:} 
connection probability, FK-Ising model, Ising model, random-cluster model, conformal field theory, correlation function,  conformal invariance\\

\noindent\textbf{MSC:}  Primary 82B20, 82B27, 60K35; Secondary 60J67

\tableofcontents

\section{Introduction}
\subsection{Background and motivation}\label{sec::bac::mot}
Fortuin and Kasteleyn introduced the \emph{random-cluster model} in the 1970s (see \cite{FORTUIN1972536}) as a general family of discrete percolation models that combines together Bernoulli percolation, graphical representations of spin models (Ising \& Potts models), and polymer models (as a limiting case).
Generally, in such models, edges are declared open or closed according to a given probability measure, the simplest being the independent product measure of Bernoulli percolation.
Of particular interest are percolation properties, that is, whether various points in space are connected by paths of open edges. 

The random-cluster model has been actively investigated in the past decades, for instance, because of its important feature of \emph{criticality}: for certain parameter values the model exhibits a continuous phase transition.
Criticality can be practically identified as follows.
On a lattice with a small mesh, say $\delta \Z^2$, consider the probability that an open path connects two opposite sides of a topological rectangle (i.e.,~a bounded domain with four marked points on its boundary). This probability tends to zero as $\delta \to 0$ when the model is ``subcritical,''  
while it tends to one as $\delta \to 0$ when the model is ``supercritical.'' 
At the critical point,  
the connection probability has a nontrivial limit, which belongs to $(0,1)$ and depends on the ``shape'' (i.e.,~the conformal modulus) of the topological rectangle.
The exact identification of the limit of the connection probability, though, is highly nontrivial.

The phase transition in the random-cluster model has been argued to result in conformal invariance and universality for the scaling limit of the model (see, e.g.,~\cite{Cardy:Scaling_and_renormalization_in_statistical_physics}). 
For generic values of the cluster weight parameter $q \in [1,4]$, it was recently shown~\cite{DKKMO:Rotational_invariance_in_critical_planar_lattice_models} that correlations in the critical random-cluster model become rotationally invariant in the scaling limit. This provides strong evidence of conformal invariance, while still not being enough to prove it. 
Conformal invariance had been previously rigorously established for the FK-Ising model (cluster weight $q=2$) and for Bernoulli site percolation on the triangular lattice (related to Bernoulli bond percolation, corresponding to cluster weight $q=1$) \cite{SmirnovPercolationConformalInvariance,CamiaNewmanPercolationFull,CamiaNewmanPercolation,SmirnovHolomorphicFermion,ChelkakSmirnovIsing,CDHKS:Convergence_of_Ising_interfaces_to_SLE,KemppainenSmirnovFullLimitFKIsing,KemppainenSmirnovBoundaryTouchingLoopsFKIsing,IzyurovMultipleFKIsing}. 

In addition to proving conformal invariance, identifying in the scaling limit objects that have a conformal field theory (CFT) interpretation is crucial in order to get access to the full power of the CFT formalism applicable to critical lattice models (see, e.g.,~\cite{henkel2013conformal}). In this direction, in the case of critical site percolation on the triangular lattice, one of us recently established~\cite{Cam24,PercGasket} the conformal covariance of connection probabilities in the scaling limit, showing that they can be interpreted as CFT correlation functions and proving a conjecture formalized by Aizenman in the 1990s. 
We then moved one step forward and started to explore the CFT structure of critical percolation~\cite{CamiaFengLogPercolationMath,CamiaFengLogPercolationPhysics}, identifying the scaling limits of various connection probabilities with CFT correlation functions and proving a rigorous version of an operator product expansion (OPE).

The first main motivation of this article is to provide 
a natural extension of the aforementioned works~\cite{Cam24,PercGasket} to the FK-Ising model, which is of great interest to both mathematicians and physicists. 
In those works, the local independence of percolation is used in the proofs, so it is natural to ask whether one can adapt the arguments developed for percolation to deal with the critical random-cluster model with cluster weight $q\neq1$.
In this paper, we focus on the case $q=2$, the only one for which the conformal invariance of the scaling limit of interfaces has been proved so far.
As we will see, extending the results of~\cite{Cam24,PercGasket} to the FK model with $q=2$ requires additional work and involves new ingredients, namely a classical result by Wu~\cite{WUTwopoint} on Ising two-point functions, a spatial mixing property and, in the case of connection probabilities involving boundary points, Smirnov's FK-Ising fermionic observable (see~\cite{SmirnovHolomorphicFermion})\footnote{Wu’s result on the Ising two-point function and Smirnov’s FK-Ising observable are
only used to figure out the exact orders of the normalization factors in Theorems 1.1 and 1.4.}.

The second main motivation is to provide an alternative approach to study the conformal covariance and the CFT structure of spin and energy correlations in the Ising and Potts models, which are classical models of ferromagnetism and are among the most studied models of statistical mechanics.
In the case of the Ising model, the conformal covariance and the CFT structure of spin and energy correlations have been established rigorously to a large extent~\cite{HonglerSmirnovIsingEnergy,ChelkakIzyHolomorphic,ChelkakHonglerIzyurovConformalInvarianceCorrelationIsing,CHI:Correlations_of_primary_fields_in_the_critical_planar_Ising_model,UniversalitySpinIsing} using discrete complex analysis tools, where the \emph{s-holomorphicity} of certain observables plays an essential role. However, s-holomorphicity is difficult to prove beyond the cases of the Ising and FK-Ising models.
Since the correlations of some of the most basic Ising and Potts fields, such as the spin and energy fields, can be expressed in terms of point-to-point connection probabilities in the random-cluster model via the Edwards-Sokal coupling\footnote{In particular, the FK-Ising random-cluster model is related to the Ising spin model.}~\cite{EdwardsSokal}, it is interesting to develop a geometric approach to study conformal covariance and the CFT structure of spin and energy correlations based on connection probabilities and interfaces in the random-cluster model.\footnote{It would also be interesting to construct spin or energy correlations directly for $\mathrm{CLE}$ in the continuum.}
Such an approach is already interesting for the case of the Ising model, but could prove potentially even more useful to study the scaling limits of Potts model with values of $q\neq 2$.

We will show that, for the 2D critical FK-Ising model, (normalized) point-to-point connection probabilities of various kinds of link patterns have conformally covariant scaling limits (see Theorems~\ref{thm::cvg_proba} and~\ref{thm::main_boundary} below).
As a corollary, we provide a new proof of conformal covariance of Ising spin correlations (see Corollary~\ref{coro::two_point_FK} below).
The main inputs of the proofs are the FKG inequality, RSW estimates, the one-arm exponent for $\mathrm{CLE}$ computed in~\cite{SchrammSheffieldWilsonConformalRadii}, and the convergence of interfaces towards $\mathrm{CLE}_{16/3}$ in the Camia-Newman topology\footnote{See~\cite{CamiaNewmanPercolationFull}.}~\cite{KemppainenSmirnovFullLimitFKIsing,KemppainenSmirnovBoundaryTouchingLoopsFKIsing}.
 We also use a spatial mixing property, which is essentially a consequence of the FKG inequality and RSW estimates, as shown in~\cite{DuminilCopinHonglerNolinRSWFKIsing}. We note that, although \cite{DuminilCopinHonglerNolinRSWFKIsing} deals with FK percolation with $q=2$, there seems to be no fundamental obstacle to extending the arguments in that paper to other values of $q \in [1,4]$, given the corresponding RSW estimates established in~\cite{DCMTRCMFractalProperties}.

In the present paper, results proved using discrete complex analysis techniques are needed directly only when dealing with correlation functions involving boundary vertices, namely in Section~\ref{sec::con::bou}\footnote{In Section~\ref{sec::con::bou}, they are used to replace a classical result by Wu on Ising correlations between pairs of points in the bulk.} and in the appendix. They are also used indirectly because the proofs of convergence of discrete interfaces towards $\mathrm{CLE}_{16/3}$ involve the s-holomorphicity of certain observables (see~\cite{KemppainenSmirnovFullLimitFKIsing,KemppainenSmirnovBoundaryTouchingLoopsFKIsing}). However, the recent groundbreaking work~\cite{DKKMO:Rotational_invariance_in_critical_planar_lattice_models} suggests that a proof of convergence and conformal invariance of interfaces for $q\in [1,4]$ without using s-holomorphicity may be possible in the future.

We emphasize that, for our results involving only vertices in the bulk, the convergence of interfaces to $\mathrm{CLE}_{16/3}$ is the only place where s-holomorphicity is used.
If one could prove convergence to $\mathrm{CLE}_{\kappa}$ for other FK models with $q \in [1,4]$, then a combination of our arguments in this paper and standard percolation techniques would allow us to extend our results to those FK models, at least in a weaker form (normalizing connection probabilities with the probability of the one-arm event).

\subsection{Random-cluster model}

For definiteness and to take full advantage of known scaling limit results (see~\cite{SmirnovHolomorphicFermion,CDHKS:Convergence_of_Ising_interfaces_to_SLE}), we consider subgraphs of the square lattice $\Z^2$, which is the graph with vertex set $V(\Z^2):=\{ z = (m, n) \colon m, n\in \Z\}$ and edge set $E(\Z^2)$ given by edges $\edge{z}{w}$ between vertices $z,w \in V(\mathbb{Z}^2)$ whose Euclidean distance equals one (called \emph{neighbors}).
This is our primal lattice. Its standard dual lattice is denoted by $(\Z^2)^{\bullet}$. 
The medial lattice $(\Z^2)^{\diamond}$ is the graph {whose vertices are the centers of the edges of the square lattice and whose edges connect vertices at distance $1/\sqrt{2}$.} 
For a subgraph $\graph \subset \Z^2$, we define its boundary to be the following set of vertices:
\begin{align*}
	\partial \graph = \{ z \in V(\graph) \, \colon \, \exists \; w \not\in V(\graph) \text{ such that }\edge{z}{w}\in E(\Z^2)\} ,
\end{align*}
{and similarly for subgraphs of $(\Z^2)^{\bullet}$ and $(\Z^2)^{\diamond}$.}
When we add the subscript or superscript $a$, we mean that the lattices $\Z^2, (\Z^2)^{\bullet}, (\Z^2)^\diamond$ have been scaled by $a > 0$. 
We consider the models in the scaling limit $a \to 0$. For $z\in \mathbb{C}$ and $r>0$, we write
\begin{equation*}
	B_r(z)=\{w\in \mathbb{C}:|z-w|<r\}.
\end{equation*}

Let $\graph = (V(\graph), E(\graph))$ be a finite subgraph of $\Z^2$.
A random-cluster \textit{configuration} 
$\omega=(\omega_e)_{e \in E(\graph)}$ is an element of $\{0,1\}^{E(\graph)}$.
An edge $e \in E(\graph)$ is said to be \textit{open} (resp.~\textit{closed}) if $\omega_e=1$ (resp.~$\omega_e=0$).
We view the configuration $\omega$ 
as a subgraph of $\graph$ with vertex set $V(\graph)$  and edge set $\{e\in E(\graph) \colon \omega_e=1\}$.
We denote by $o(\omega)$ (resp.~$c(\omega)$) the number of open (resp.~closed) edges in~$\omega$.

We are interested in the connectivity properties of the graph $\omega$ with various boundary conditions. 
The maximal connected\footnote{Two vertices $z$ and $w$ are said to be \textit{connected} by $\omega$ if there exists a sequence  $\{z_j \colon 0\le j\le l\}$  of vertices such that
	$z_0 = z$, $z_l = w$, and each edge $\edge{z_j}{z_{j+1}}$ is open in $\omega$ for $0 \le j < l$.} components of $\omega$ are called \textit{clusters}.
The boundary conditions encode how the vertices are connected outside of $\graph$.
More precisely, by a \textit{boundary condition} $\bssymb$ we refer to a partition $\bssymb_1 \sqcup \cdots \sqcup \bssymb_m$ of $\partial \graph$.
Two vertices $z,w \in \partial \graph$ are said to be \textit{wired} in $\bssymb$ if $z,w \in \bssymb_j$ for some $j$. 
In contrast, \textit{free} boundary segments comprise vertices that are not wired with any other vertex (so the corresponding part $\pi_j$ is a singleton).  
We denote by $\omega^{\bssymb}$
the (quotient) graph obtained from the configuration $\omega$ by identifying the wired vertices in $\bssymb$.

Finally, the \textit{random-cluster model} on $\graph$ with edge-weight $p\in [0,1]$, cluster-weight $q>0$,
and boundary condition $\bssymb$, is the probability measure $\smash{\PRCM^{\bssymb}_{p,q,\graph}}$ on
the set $\{0,1\}^{E(\graph)}$ of configurations $\omega$  defined by
\begin{align*}
	\PRCM^{\bssymb}_{p,q,\graph}[\omega] 
	:= \; & \frac{p^{o(\omega)}(1-p)^{c(\omega)}q^{k(\omega^{\bssymb})}}{\underset{\omega \in \{0,1\}^{E(\graph)}}{\sum} p^{o(\omega)}(1-p)^{c(\omega)}q^{k(\omega^{\bssymb})} } ,
\end{align*}
where $k(\omega^{\bssymb})$ is the number of 
connected components of the graph $\omega^{\bssymb}$.
For $q=2$, this model is also known as the \textit{FK-Ising model}, while for $q=1$, it is simply the Bernoulli bond percolation (i.e., it is a product measure, with the edges taking independent values). The random-cluster model combines together several important models in the same family.
For integer values of $q$, it is very closely related to the $q$-state Potts model, 
and by taking a suitable limit, the case of $q=0$ corresponds to the uniform spanning tree (see, e.g.,~\cite{Duminil-Copin-PIMS_lectures}). 
For $q \in [1,4]$, it was proven~\cite{DuminilSidoraviciusTassionContinuityPhaseTransition} that, for a suitable choice of edge-weight $p$, namely
\begin{align*}
	p = p_c(q) := \frac{\sqrt{q}}{1+\sqrt{q}} ,
\end{align*}
the random-cluster model exhibits a \textit{continuous phase transition}
in the sense that, for $p > p_c(q)$, there almost surely exists an infinite cluster, while for $p < p_c(q)$, there is no infinite cluster almost surely.
Moreover, the limit $p \searrow p_c(q)$ is approached in a continuous way. 
(This is also expected to hold when $q \in (0,1)$, while it is known that the phase transition is discontinuous when $q > 4$~\cite{DCGagnebinHarelManolescuTassionDiscontinuity}.)
Therefore, the scaling limit is expected to be conformally invariant for all $q \in [0,4]$.
In the present article, we consider point-to-point connection probabilities in the critical FK-Ising model.

\subsection{Connection probabilities of interior vertices}
Fix $n\geq 2$, let $\parti=(Q_1,\ldots,Q_r)$ be a partition of $\{1,2,\ldots,n\}$.  For $a>0$, we denote by $a\mathbb{Z}^2$ the scaled square lattice. For a simply connected subgraph $\Omega^{a}\subseteq a\mathbb{Z}^2$, we denote by $\mathbb{P}^{a}=\mathbb{P}^a_{\Omega}$ the critical FK-Ising measure on $\Omega^{a}$ with free boundary conditions\footnote{The boundary condition chosen here is not essential. We can change to, for instance, wired or alternating wired/free boundary conditions.}. Let $z_1^a,\ldots,z_n^a\in \Omega^{a}$ be $n$ distinct vertices. Denote by $G(\parti;z_1^a,\ldots,z_n^a)$ the event that $z_1^a,\ldots,z_n^a$ are connected to each other according to the partition $\parti$, meaning that $z_i^a$ and $z_j^a$ are in the same open cluster if and only if $i$ and $j$ are in the same element of $\bs{Q}$. See Figure~\ref{fig:link-pattern-four} for a schematic example.
\begin{figure}
	\begin{subfigure}[b]{0.23\textwidth}
		\begin{center}
			\includegraphics[width=1\textwidth]{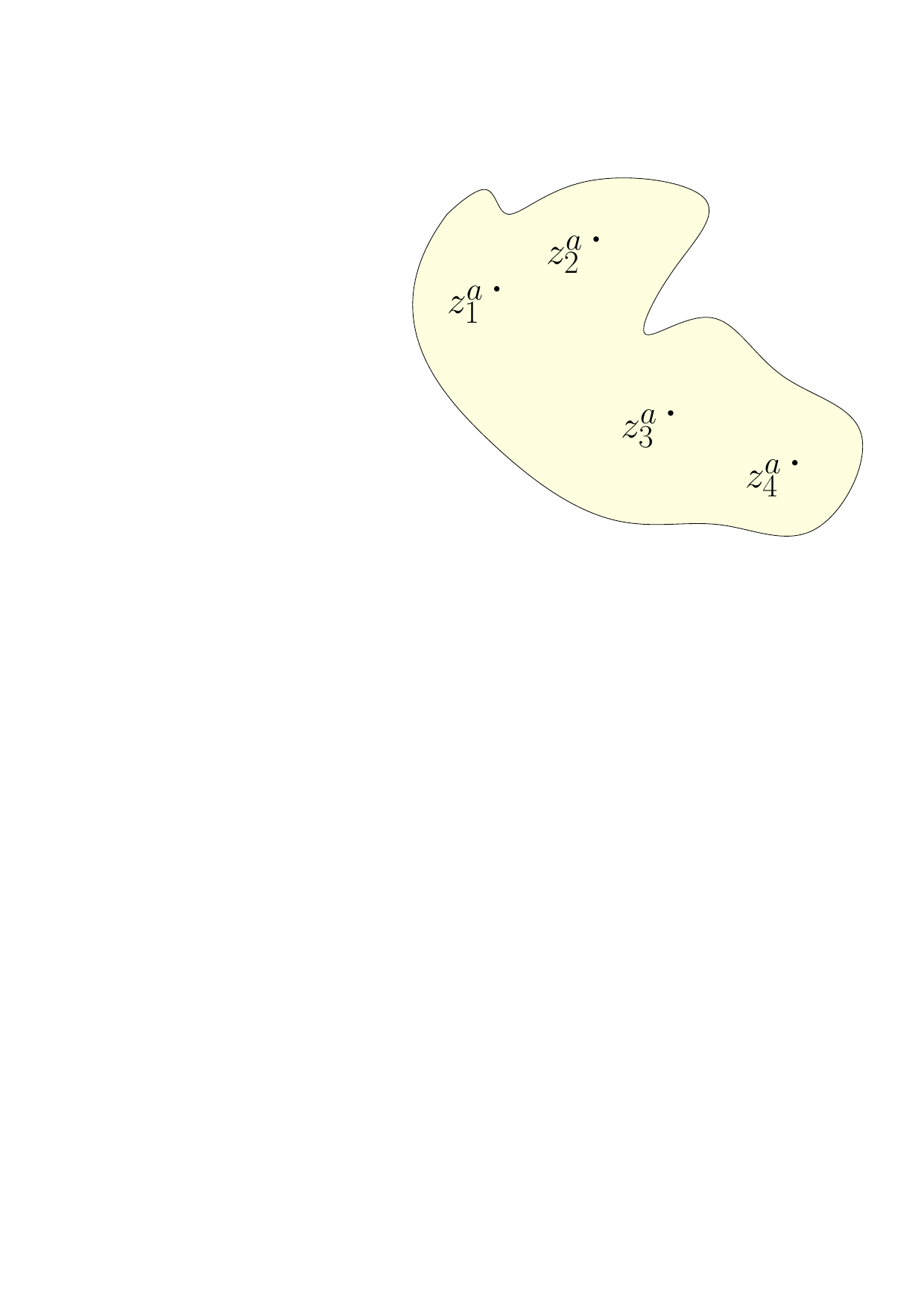}
		\end{center}
		\caption{}
	\end{subfigure}
	\begin{subfigure}[b]{0.23\textwidth}
		\begin{center}
			\includegraphics[width=1\textwidth]{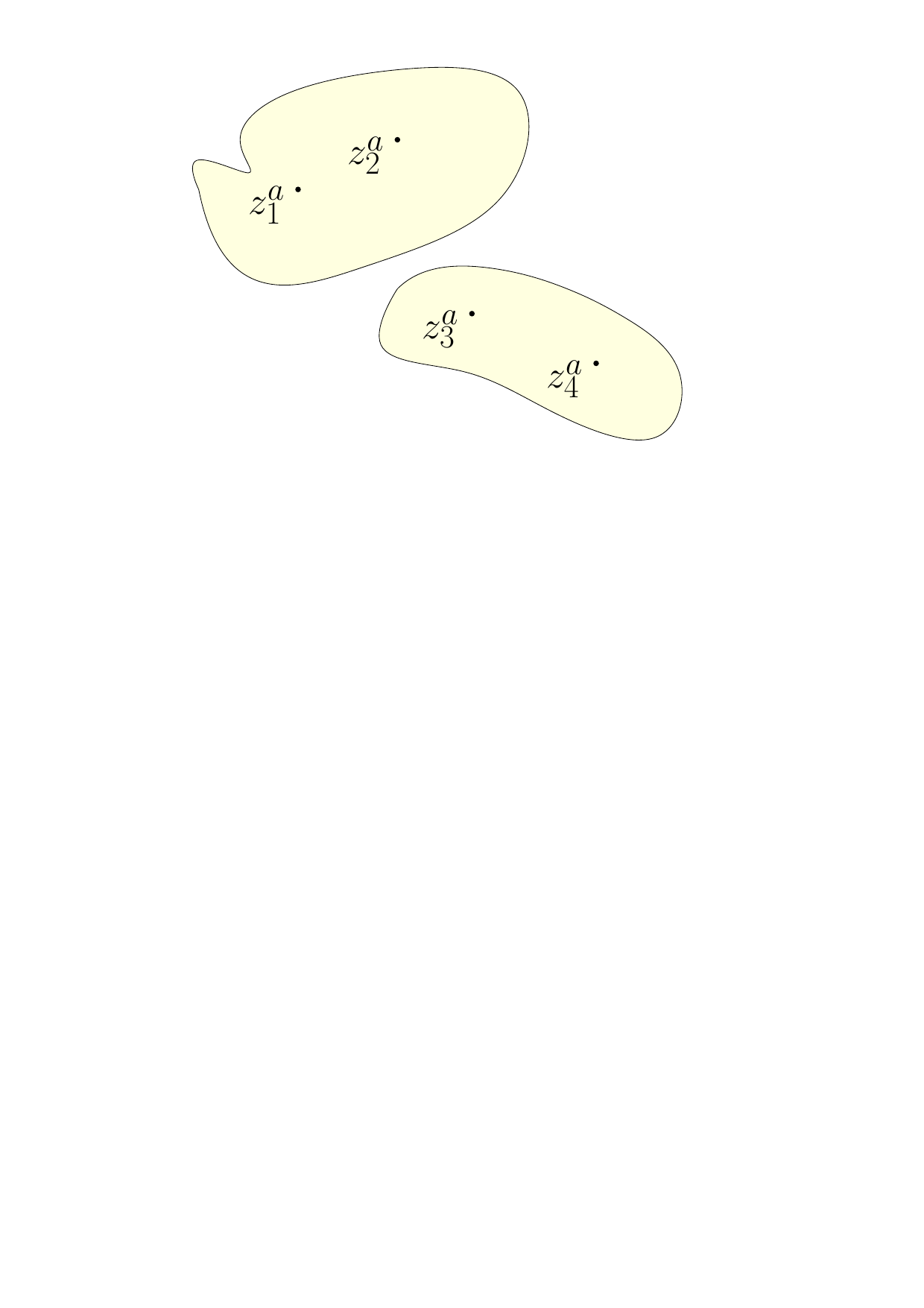}
		\end{center}
		\caption{}
	\end{subfigure}
	\begin{subfigure}[b]{0.23\textwidth}
		\begin{center}
			\includegraphics[width=1\textwidth]{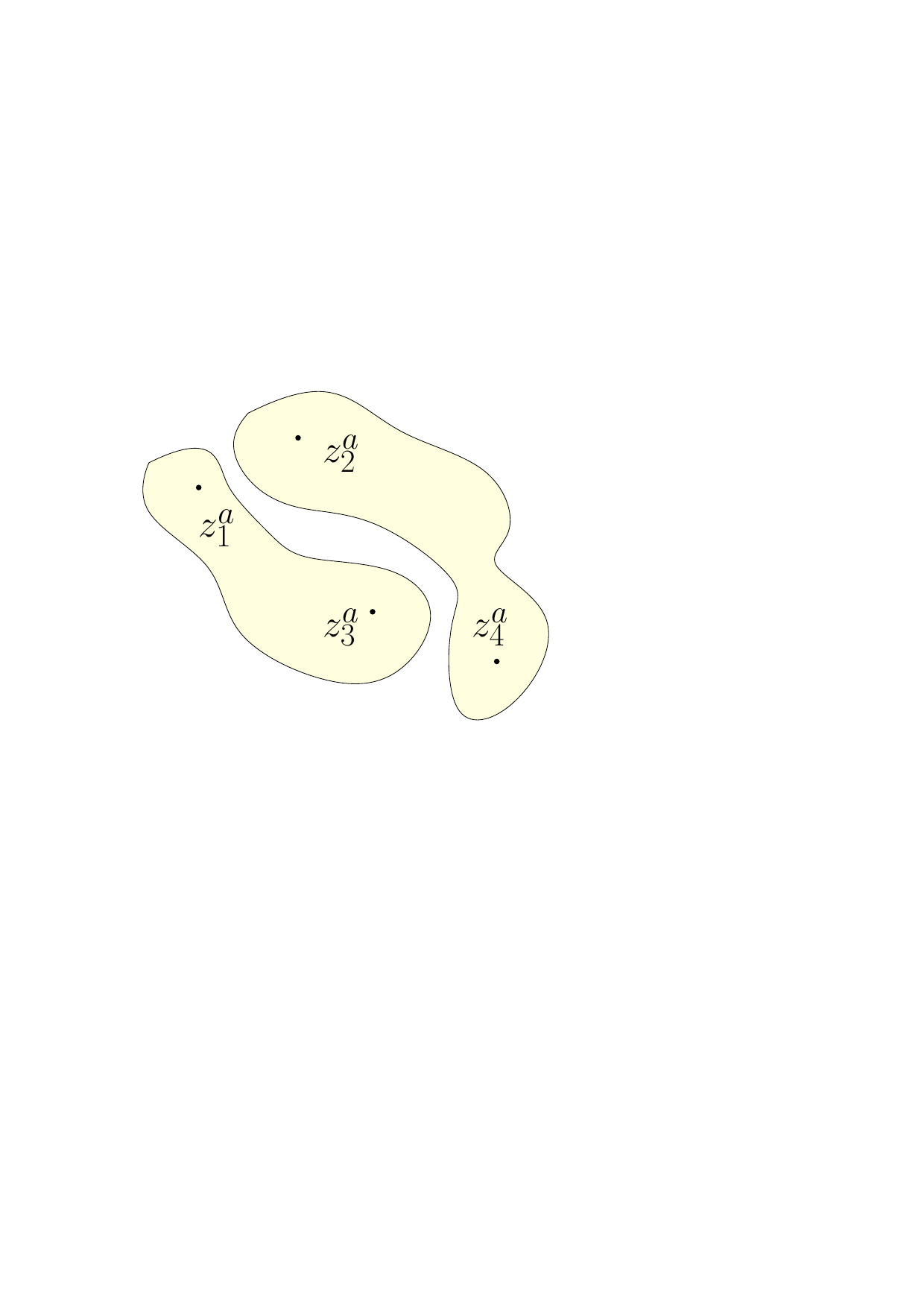}
		\end{center}
		\caption{}
	\end{subfigure}
	\begin{subfigure}[b]{0.23\textwidth}
		\begin{center}
			\includegraphics[width=1\textwidth]{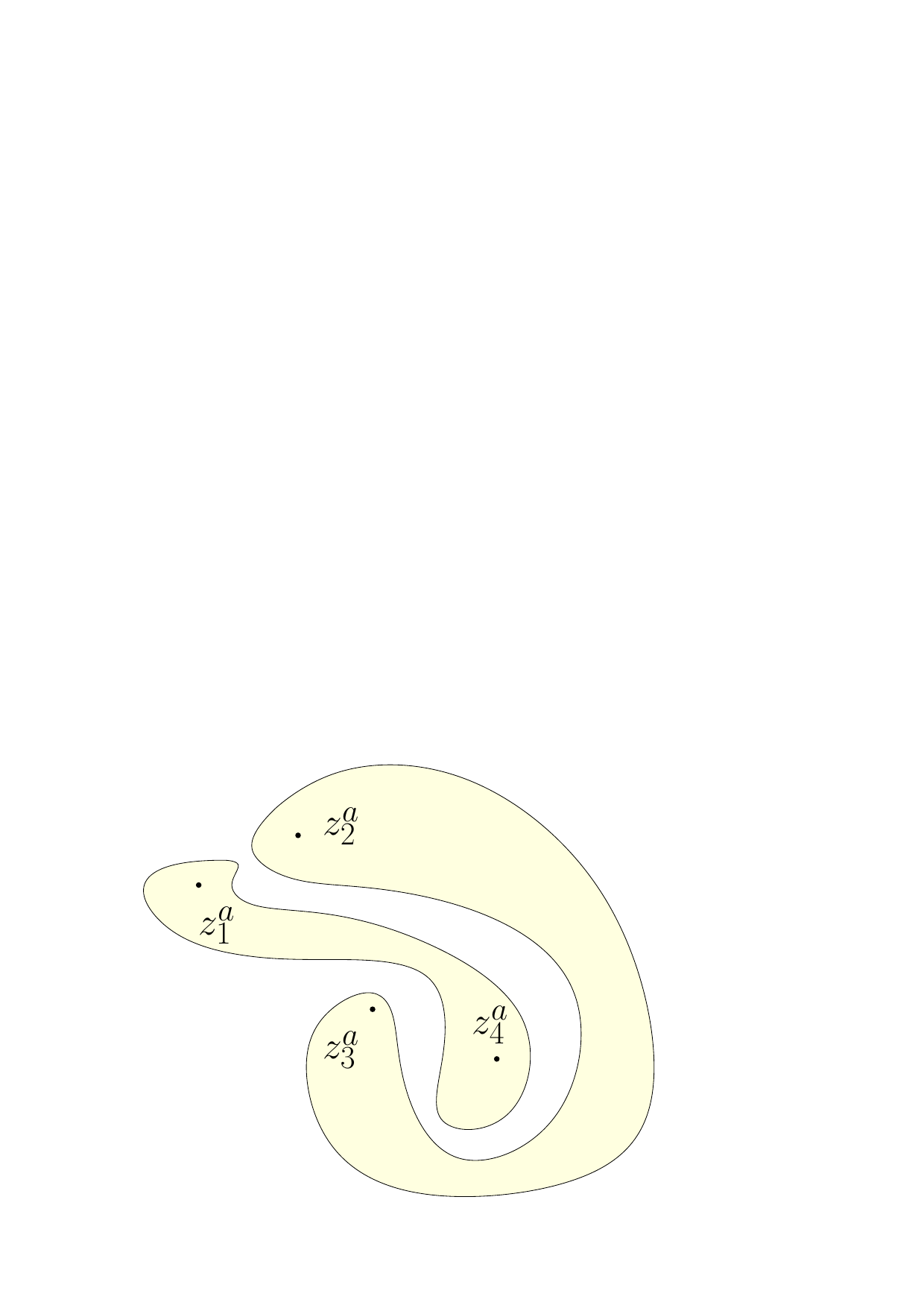}
		\end{center}
		\caption{}
	\end{subfigure}
    \caption{The event $G(\parti;z_1^a,z_2^a,z_3^a,z_4^a)$ with (a) $\parti= (\{1,2,3,4\})$; (b) $\parti = (\{1,2\},\{3,4\})$; (c) $\parti= (\{1,3\},\{2,4\})$; and (d) $\parti = (\{1,4\},\{2,3\})$. The yellow regions represent open clusters and two yellow regions are disjoint if and only if they represent distinct open clusters.}
       \label{fig:link-pattern-four}
\end{figure}
\begin{theorem} \label{thm::cvg_proba}
	Let $\Omega\subseteq \mathbb{C}$ be a simply connected domain and $z_1,\ldots,z_n\in \Omega$ be $n$ distinct points. Let $\Omega^{a}\subseteq a\mathbb{Z}^2$ be a sequence of simply connected domains that converges to $\Omega$ in the  Hausdorff metric\footnote{Our proofs can be generalized to the Carath\'edory convergence of discrete domains easily, which is weaker than the Hausdorff convergence used in this paper for simplicity.} as $a\to 0$. Suppose that $z_1^a,\ldots,z_{n}^a\in \Omega^a$ are vertices satisfying $\lim_{a\to 0}z_j^a = z_j$ for $1\leq j\leq n$. Let $\parti$ be a partition of $\{1,2,\ldots,n\}$ that contains no singletons. Then we have the following:
\begin{enumerate}[label=\textnormal{(\arabic*)}, ref=(\arabic*)]
    \item The limit 
    \begin{equation} \label{eqn::def_P}
		P(\Omega;\parti;z_1,\ldots,z_n):=\lim_{a\to 0}a^{-\frac{n}{8}}\times \mathbb{P}^a_{\Omega}\left[G(\parti;z_1^a,\ldots,z_{n}^a)\right]
	\end{equation}
exists and belongs to $(0,\infty)$.  \label{item::thm_cvg_proba}
\item The function $P$ defined via~\eqref{eqn::def_P} satisfies the following conformal covariance property: if $\varphi$ is a conformal map from $\Omega$ onto some $\Omega'$ such that $\varphi(z_j)\neq \infty$ for $1\leq j\leq n$, then we have 
	\begin{equation} \label{eqn::conformal_covariance}
P(\Omega';\parti;\varphi(z_1),\ldots,\varphi(z_n))=P(\Omega;\parti;z_1,\ldots,z_n)\times \prod_{j=1}^n |\varphi'(z_j)|^{-\frac{1}{8}}.
	\end{equation} \label{item::thm::cov}
\end{enumerate}
\end{theorem}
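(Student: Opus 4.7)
The plan is to adapt the approach developed in~\cite{Cam24,PercGasket} for critical Bernoulli site percolation to the FK-Ising setting. The lack of local independence of the FK model is compensated by the FKG inequality and by the RSW-based spatial mixing estimates of~\cite{DuminilCopinHonglerNolinRSWFKIsing}. The overall strategy is to decompose the connection event into a \emph{microscopic} factor, governing the probability that each marked vertex belongs to a nontrivial open cluster, and a \emph{macroscopic} factor, describing how the resulting clusters merge. The former will produce the normalization $a^{n/8}$ via the $1/8$ one-arm exponent, while the latter is controlled in the scaling limit by the $\mathrm{CLE}_{16/3}$ loop ensemble.

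Concretely, I would fix $\epsilon>0$ small enough that the disks $B_{\epsilon}(z_j)$ are pairwise disjoint and compactly contained in $\Omega$, and let $A_j^a(\epsilon)$ be the one-arm event that $z_j^a$ is connected by open edges in $\Omega^a$ to $\partial B_{\epsilon}(z_j)$. Since $\parti$ has no singletons, $G(\parti;z_1^a,\ldots,z_n^a)\subset\bigcap_{j=1}^n A_j^a(\epsilon)$, so one may factor
\[
\mathbb{P}^{a}_{\Omega}\!\left[G(\parti;z_1^a,\ldots,z_n^a)\right]
=\mathbb{P}^{a}_{\Omega}\!\left[\bigcap_{j=1}^n A_j^a(\epsilon)\right]\cdot
\mathbb{P}^{a}_{\Omega}\!\left[G(\parti;z_1^a,\ldots,z_n^a)\,\Big|\,\bigcap_{j=1}^n A_j^a(\epsilon)\right].
\]
For the first factor, FKG gives the lower bound $\prod_j\mathbb{P}^{a}_{\Omega}[A_j^a(\epsilon)]$, and the RSW-based spatial mixing of~\cite{DuminilCopinHonglerNolinRSWFKIsing} supplies a matching upper bound up to a multiplicative error $1+o_\epsilon(1)$ uniform in $a$. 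The one-point factor $a^{-1/8}\mathbb{P}^{a}_{\Omega}[A_j^a(\epsilon)]$ converges as $a\to 0$ to a finite nonzero quantity $\lambda_j(\epsilon)$; existence, nontriviality, and the correct scaling are pinned down by combining RSW quasi-multiplicativity with Wu's classical bulk two-point asymptotic~\cite{WUTwopoint} and the Edwards--Sokal coupling, which together identify $a^{-1/4}\mathbb{P}^{a}_{\Omega}[z^a\leftrightarrow w^a]$ with a nondegenerate $|z-w|^{-1/4}$-type limit for bulk points and therefore fix the exponent $1/8$ per insertion.

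For the conditional factor, on $\bigcap_j A_j^a(\epsilon)$ each $z_j^a$ is surrounded by an outermost FK cluster boundary of diameter at least $\epsilon$. By the convergence of FK-Ising loop ensembles to $\mathrm{CLE}_{16/3}$ in the Camia--Newman topology~\cite{KemppainenSmirnovFullLimitFKIsing,KemppainenSmirnovBoundaryTouchingLoopsFKIsing}, the joint law of these macroscopic outermost loops converges to that of the corresponding $\mathrm{CLE}_{16/3}$ loops surrounding $z_1,\ldots,z_n$, and the event $G(\parti;z_1^a,\ldots,z_n^a)$ is encoded at the macroscopic level by which of these loops (and nested macroscopic clusters) coincide, a continuous functional of the limiting ensemble. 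Hence $\mathbb{P}^{a}_{\Omega}[G\mid\cap_j A_j^a(\epsilon)]$ converges as $a\to 0$ to a $\mathrm{CLE}_{16/3}$-measurable quantity $\Lambda_\epsilon(\Omega;\parti;z_1,\ldots,z_n)$. Multiplying the microscopic and conditional limits, and using the $\mathrm{CLE}_{16/3}$ one-arm exponent $1/8$ of~\cite{SchrammSheffieldWilsonConformalRadii} to control the $\epsilon$-dependence, one obtains the existence of $\lim_{\epsilon\to 0}\prod_j\lambda_j(\epsilon)\cdot\Lambda_\epsilon(\Omega;\parti;z_1,\ldots,z_n)\in(0,\infty)$, which defines the function $P(\Omega;\parti;z_1,\ldots,z_n)$ in part~\ref{item::thm_cvg_proba}.

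Part~\ref{item::thm::cov} then follows cleanly: $\mathrm{CLE}_{16/3}$ is conformally invariant, so $\Lambda_\epsilon$ transforms trivially under $\varphi$, while a Koebe distortion argument combined with the $1/8$ one-arm exponent shows that each normalized one-arm prefactor transforms as $|\varphi'(z_j)|^{-1/8}$, producing the announced conformal weights. The main obstacle, in contrast to the independent percolation setting of~\cite{Cam24,PercGasket}, is precisely the absence of local independence: justifying the product factorization of the joint one-arm event and the $a\to 0$ convergence of the conditional probability both require quantitative FKG and RSW inputs that are uniform in $a$, in order to safely exchange the two nested limits $a\to 0$ and $\epsilon\to 0$.
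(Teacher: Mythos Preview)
Your overall architecture matches the paper's: factor $\mathbb{P}^a_\Omega[G(\parti;\ldots)]$ into a joint one-arm prefactor and a conditional probability, use spatial mixing to reduce the prefactor to a power of the full-plane one-arm, use Wu's two-point asymptotic to pin down the exponent $1/8$, and use CLE$_{16/3}$ convergence for the macroscopic factor. The conformal covariance argument via Koebe distortion and the CLE one-arm exponent is also essentially what the paper does (its Lemma~\ref{lem::scaling}).

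There is, however, one genuine gap. You write that ``$\mathbb{P}^{a}_{\Omega}[G\mid\cap_j A_j^a(\epsilon)]$ converges as $a\to 0$ to a $\mathrm{CLE}_{16/3}$-measurable quantity'' because ``the event $G(\parti;\ldots)$ is encoded \ldots\ by a continuous functional of the limiting ensemble.'' Neither part of this is justified as stated. First, the conditioning event $\bigcap_j A_j^a(\epsilon)$ has probability of order $a^{n/8}\to 0$, so CLE convergence of the \emph{unconditioned} loop ensemble says nothing directly about the conditioned measure. The paper resolves this with a separation/coupling lemma (its Lemma~\ref{lem::separation}): the measure conditioned on $\{z_j^a\leftrightarrow\partial B_\epsilon(z_j^a)\}$ is coupled, outside a small ball $B_\delta(z_j^a)$, to the measure conditioned on the annulus crossing $\{\partial B_\eta(z_j^a)\leftrightarrow\partial B_\epsilon(z_j^a)\}$, and the latter event has probability bounded away from zero, so CLE convergence applies. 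Second, point-to-point connection is \emph{not} a continuous functional in the Camia--Newman topology; the paper handles this by sandwiching $G(\parti;\ldots)$ between events of the form $\{B_{\delta_m}(z_j)\leftrightarrow B_{\delta_m}(z_k)\}$ (together with open circuits in annuli) and then sending $\delta_m\to 0$.

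A minor point: your final $\epsilon\to 0$ limit is unnecessary. The quantity $a^{-n/8}\mathbb{P}^a_\Omega[G]$ does not depend on $\epsilon$, so once you prove that each factor in your decomposition converges as $a\to 0$ for fixed $\epsilon$, the product is automatically $\epsilon$-independent. The paper instead introduces an \emph{inner} scale $\delta_m\to 0$ (with $\epsilon$ fixed) to make the spatial-mixing errors vanish; this also lets it compare the joint one-arm in $\Omega$ directly to $\big(\mathbb{P}^a_{\mathbb{Z}^2}[0\leftrightarrow\partial B_1(0)]\big)^n$ rather than to $\prod_j\mathbb{P}^a_\Omega[A_j^a(\epsilon)]$, which avoids having to separately prove convergence of $a^{-1/8}\mathbb{P}^a_\Omega[A_j^a(\epsilon)]$.
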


The normalization factor $a^{\frac{1}{8}}$ in~\eqref{thm::cvg_proba} is related to the interior one-arm exponent for the FK-Ising model and can be derived using Wu's result on the full-plane Ising two-point spin correlation (see~\cite{WUTwopoint} and Theorem~\ref{thm::two_point_Ising} below for more details). As we will see in the proof, without Wu's result (Theorem~\ref{thm::two_point_Ising}), we can still prove the results in Theorem~\ref{thm::cvg_proba} by combining Lemmas~\ref{lem::cvg_proba_aux1},~\ref{lem::norma_1} and~\ref{lem::scaling} below, but with the normalization factor $a^{\frac{1}{8}}$ replaced by $\mathbb{P}_{\mathbb{Z}^2}^a[0\longleftrightarrow \partial B_1(0)]$.



We emphasize that the domain $\Omega$ in Theorem~\ref{thm::cvg_proba} is not necessarily bounded. For $n\geq 2$, we write $\parti_{n}=(\{1,2,\ldots,n\})$ for the special partition with a single element, corresponding to the case in which $z_1^a,\ldots z_n^a$ belong to the same cluster. Then Theorem~\ref{thm::cvg_proba} immediately implies that there exists a constant $C_1\in (0,\infty)$ such that
\begin{equation*}
	P(\mathbb{C};\parti_2;z_1,z_2)=C_1|z_1-z_2|^{-\frac{1}{4}},
\end{equation*}
which can also be derived from the rotational invariance of the full-plane Ising correlations given by~\cite[Remark~2.26]{ChelkakHonglerIzyurovConformalInvarianceCorrelationIsing} (or~\cite{RotationInvari}) and the Edwards-Sokal coupling (see~\cite{EdwardsSokal}). 
Moreover, since M\"obius transformations have three degrees of freedom, we can also conclude from Theorem~\ref{thm::cvg_proba} that there exists a constant $C_2\in (0,\infty)$ such that 
\begin{equation} \label{eqn::P3}
	P(\mathbb{C};\parti_3;z_1,z_2,z_3)=C_2|z_1-z_2|^{-\frac{1}{8}}|z_1-z_3|^{-\frac{1}{8}}|z_2-z_3|^{-\frac{1}{8}}.
\end{equation}
Consequently, we have the following factorization formula
\begin{equation*}
	P(\mathbb{C};\parti_3;z_1,z_2,z_3)=\frac{C_2}{C_1^{\frac{3}{2}}}\sqrt{	P(\mathbb{C};\parti_2;z_1,z_2)\times 	P(\mathbb{C};\parti_2;z_1,z_3)\times 	P(\mathbb{C};\parti_2;z_2,z_3)}.
\end{equation*}
Analogous results are derived in~\cite[Section~1.1]{Cam24} for percolation, in which case, the value of the constant $C_2/C_1^{3/2}$, first conjectured by Delfino and Viti in~\cite{Delfino_2011}, was recently computed rigorously in~\cite{IntegraCLE}, using techniques that rely on Liouville quantum gravity and the imaginary DOZZ formula~\cite{Schomerus_2003,Zam05,KP07}. According to private communications with the authors of~\cite{IntegraCLE}, similar techniques should also allow to compute the ratio $C_2/C_1^{3/2}$ for the FK-Ising model. Combined with the results of this paper, such a computation would provide a complete proof of the Delfino-Viti conjecture for the FK-Ising model.

As another application of Theorem~\ref{thm::cvg_proba}, we can partially recover results from~\cite{ChelkakHonglerIzyurovConformalInvarianceCorrelationIsing}:

\begin{corollary}
	Assume the same setup as in Theorem~\ref{thm::cvg_proba}. Consider the critical Ising model on $\Omega^a$ with free\footnote{The boundary condition here is not essential. We can change to, for instance, $\oplus$ boundary condition or alternating $\oplus/{\mathrm{free}}$ boundary conditions.} boundary condition and denote by $\mathbb{E}^a_{\Omega}$ the corresponding expectation. Then 
	\begin{equation*}
		\langle \sigma_{z_1}\ldots\sigma_{z_n}\rangle_{\Omega}:=\lim_{a\to 0}a^{-\frac{n}{8}} \times \mathbb{E}^a_{\Omega}\left[\sigma_{z_1^a}\ldots\sigma_{z_n^a}\right]
	\end{equation*}
	exists and belongs to $[0,\infty)$. The limit equals $0$ if and only if $n$ is odd. Moreover, $\langle \sigma_{z_1}\cdots\sigma_{z_n}\rangle_{\Omega}$ satisfies the same conformal covariance property as in~\eqref{eqn::conformal_covariance}.
\end{corollary}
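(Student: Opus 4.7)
The plan is to derive the corollary from Theorem~\ref{thm::cvg_proba} via the Edwards--Sokal coupling between the critical FK-Ising model with free boundary conditions and the critical Ising model with free boundary conditions. Under this coupling, an Ising configuration is produced from an FK-Ising sample by assigning, independently and uniformly, a $\pm 1$ spin to each FK cluster.

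First, I would condition on the connectivity pattern induced by the FK configuration at the marked vertices. If $\parti=(Q_1,\ldots,Q_r)$ denotes that pattern, then on $G(\parti;z_1^a,\ldots,z_n^a)$ the product $\sigma_{z_1^a}\cdots\sigma_{z_n^a}$ factors as $\prod_{j=1}^r \varepsilon_j^{|Q_j|}$, where the $\varepsilon_j\in\{\pm 1\}$ are the independent uniform signs attached to the $r$ distinct clusters. Taking expectation over these signs, each block contributes a factor $1$ when $|Q_j|$ is even and $0$ when $|Q_j|$ is odd. Writing $\mathcal{P}^{\even}_n$ for the set of partitions of $\{1,\ldots,n\}$ all of whose blocks have even cardinality, this yields the finite decomposition
\begin{equation*}
\mathbb{E}^a_{\Omega}\left[\sigma_{z_1^a}\cdots\sigma_{z_n^a}\right] \;=\; \sum_{\parti\in \mathcal{P}^{\even}_n}\mathbb{P}^a_{\Omega}\left[G(\parti;z_1^a,\ldots,z_n^a)\right].
\end{equation*}

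If $n$ is odd, the set $\mathcal{P}^{\even}_n$ is empty (a sum of even integers cannot equal an odd number), so the left-hand side vanishes identically and the rescaled limit is $0$. If $n$ is even, every partition in $\mathcal{P}^{\even}_n$ consists of blocks of size $\ge 2$ and hence contains no singletons, so Theorem~\ref{thm::cvg_proba}\ref{item::thm_cvg_proba} applies to each summand. Multiplying by $a^{-n/8}$ and passing to the limit (interchanging the limit with the finite sum is trivial), I obtain
\begin{equation*}
\langle \sigma_{z_1}\cdots\sigma_{z_n}\rangle_{\Omega}
\;=\; \sum_{\parti\in \mathcal{P}^{\even}_n} P(\Omega;\parti;z_1,\ldots,z_n),
\end{equation*}
which exists and is strictly positive (each summand lies in $(0,\infty)$ by Theorem~\ref{thm::cvg_proba}), hence belongs to $[0,\infty)$ as claimed.

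Conformal covariance is then immediate: applying Theorem~\ref{thm::cvg_proba}\ref{item::thm::cov} to each term and noting that the factor $\prod_{j=1}^n|\varphi'(z_j)|^{-1/8}$ does not depend on $\parti$, one can pull it out of the sum and conclude. There is essentially no new obstacle in this argument; all substance is encoded in Theorem~\ref{thm::cvg_proba}, and the only verifications required are the standard form of the Edwards--Sokal coupling with free boundary conditions and the elementary cancellation of odd blocks under the uniform spin coloring.
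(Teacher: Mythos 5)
Your proposal is correct and follows essentially the same route as the paper: the paper's proof also uses the Edwards--Sokal coupling to write $\mathbb{E}^a_{\Omega}[\sigma_{z_1^a}\cdots\sigma_{z_n^a}]$ as the sum of $\mathbb{P}^a_{\Omega}[G(\parti;z_1^a,\ldots,z_n^a)]$ over partitions $\parti$ with all blocks of even size, and then invokes Theorem~\ref{thm::cvg_proba} term by term. Your write-up merely makes explicit the cluster-coloring cancellation, the emptiness of the even-partition set for odd $n$, and the absence of singletons, all of which the paper leaves implicit.
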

\begin{proof}
	Let $\mathcal{Q}$ be the set of all partitions $\parti=\left(Q_1,\ldots,Q_l\right)$ of $\{1,2,\ldots,n\}$ such that each $Q_r$ contains an even number of elements. According to the Edwards-Sokal coupling (see~\cite{EdwardsSokal}), we have
	\begin{equation}\label{eqn::Ising_npoint_aux1}
		\mathbb{E}^a_{\Omega}\left[\sigma_{z_1^a}\cdots\sigma_{z_n^a}\right]=\sum_{\parti\in \mathcal{Q}} \mathbb{P}^a_{\Omega}\left[G\left(\parti;z_1^a,\ldots,z_{n}^a\right)\right].
	\end{equation}
	Then the desired conclusions follow immediately from~\eqref{eqn::Ising_npoint_aux1} and Theorem~\ref{thm::cvg_proba}.
\end{proof}

The proof of Theorem~\ref{thm::cvg_proba} follows the spirit in~\cite{Cam24}, that is, relating connection probabilities of interior vertices to the probabilities of events involving interfaces on the lattice and CLE loops in the continuum, conditional on certain crossing events that have probability $0$ in the continuum. However, compared with the percolation case in~\cite{Cam24}, in the present case, one encounters additional difficulties due to the lack of independence of the states (open or closed) of different edges. We deal with this problem using the so-called spatial mixing property proved in~\cite{DuminilCopinHonglerNolinRSWFKIsing}, which intuitively reads as follows: given two events $\mathcal{A}_1$ and $\mathcal{A}_2$ that depend only on the states of edges inside edge sets $E_1$ and $E_2$, respectively, then $\mathcal{A}_1$ and $\mathcal{A}_2$ are almost independent when $E_1$ is far from $E_2$ (see Lemma~\ref{lem::spatial} for more details).

We denote by $\overline{\mathbb{P}}^a=\overline{\mathbb{P}}^a_{\Omega}$ the critical FK-Ising measure on $\Omega^a$ with wired boundary conditions. The same strategy can be used to show the following result {(an analogous result for percolation is proved in~\cite{PercGasket})}:
\begin{theorem} \label{thm::onepoint_boundary}
	Let $\Omega\subset \mathbb{C}$ be a simply connected domain and $z\in \Omega$. Let $\Omega^{a}\subseteq a\mathbb{Z}^2$ be a sequence of simply connected domains that converges to $\Omega$ under the Hausdorff metric as $a\to 0$. Suppose that $z^a\in \Omega$ satisfies $\lim_{a\to 0} z^a=z$. Then, there exists a constant $C_3\in (0,\infty)$ such that 
	\begin{equation} \label{eqn::onepoint_boundary}
		g(\Omega;z):=\lim_{a\to 0}a^{-\frac{1}{8}}\times \overline{\mathbb{P}}_{\Omega}^a\left[z^a\longleftrightarrow \partial\Omega^a\right]=C_3 \rad(z,\Omega)^{-\frac{1}{8}},
	\end{equation}
where $\rad(z,\Omega)$ denotes the conformal radius of $\Omega$ from $z$. 
\end{theorem}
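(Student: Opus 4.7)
The proof follows the strategy of Theorem~\ref{thm::cvg_proba}, specialized to the case $n=1$ but with wired boundary conditions replacing free ones. I would proceed in three steps: (i) prove that the rescaled probability admits a finite positive limit $g(\Omega;z)$; (ii) establish the conformal covariance
\begin{equation*}
g(\varphi(\Omega);\varphi(z)) = g(\Omega;z)\cdot |\varphi'(z)|^{-1/8}
\end{equation*}
for every conformal map $\varphi$ sending $z$ to a finite point; (iii) deduce the explicit form by specializing $\varphi$ to a Riemann map $\Omega\to\U$ with $\varphi(z)=0$, so that $|\varphi'(z)|=\rad(z,\Omega)^{-1}$, and setting $C_3 := g(\U;0)$.

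For step (i), I would fix $r_0\in(0,\dist(z,\partial\Omega))$ and write the event $\{z^a\leftrightarrow\partial\Omega^a\}$ as the intersection of a microscopic one-arm event inside $B_{r_0}(z)$ with a macroscopic event that the cluster of $z^a$, once it reaches $\partial B_{r_0}(z)$, extends to $\partial\Omega^a$. The spatial mixing property of~\cite{DuminilCopinHonglerNolinRSWFKIsing} (Lemma~\ref{lem::spatial}) together with RSW estimates asymptotically decouples these two factors. Wu's formula for the full-plane Ising two-point function (Theorem~\ref{thm::two_point_Ising}), combined with monotonicity and a full-plane comparison, yields
\begin{equation*}
\overline{\mathbb{P}}^a_{\Omega}\left[z^a\leftrightarrow\partial B_{r_0}(z)\right] = c\,a^{1/8}\,r_0^{-1/8}(1+o(1)) \quad \text{as } a\to 0,
\end{equation*}
for a universal constant $c>0$. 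The macroscopic factor converges, by convergence of FK-Ising interfaces to $\CLE_{16/3}$~\cite{KemppainenSmirnovFullLimitFKIsing,KemppainenSmirnovBoundaryTouchingLoopsFKIsing} combined with the Hausdorff convergence of $\Omega^a$ to $\Omega$, to a conformally invariant quantity $h(\Omega,z,r_0)$. Letting $a\to 0$ and then $r_0\to 0$, using FKG and a chaining argument to check that the $r_0$-dependence cancels, produces $g(\Omega;z)\in(0,\infty)$.

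Step (ii) mirrors part (2) of Theorem~\ref{thm::cvg_proba}: the macroscopic factor is conformally invariant because $\CLE_{16/3}$ is, and the microscopic factor transforms like a local one-arm probability under the linearization of $\varphi$ at $z$, contributing the factor $|\varphi'(z)|^{-1/8}$ in the limit $r_0\to 0$. Step (iii) is then immediate from (ii).

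The main obstacle I anticipate is the decoupling in step (i). Unlike in Theorem~\ref{thm::cvg_proba}, where the marked points lie in the bulk at comparable scales, here a microscopic one-arm event near $z^a$ must be separated from a genuinely macroscopic event reaching $\partial\Omega^a$. This requires applying the spatial mixing estimate uniformly across all intermediate scales under the wired boundary condition and carefully handling potentially irregular portions of $\partial\Omega$ via the assumed Hausdorff convergence of $\Omega^a$ to $\Omega$.
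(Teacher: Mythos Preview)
Your three-step outline---existence of the limit, conformal covariance, then identification via a Riemann map---matches the paper's proof exactly, and the tools you invoke (spatial mixing, the one-arm coupling, CLE convergence under $\overline{\mathbb{P}}_\Omega$, and Lemma~\ref{lem::scaling} for the $|\varphi'(z)|^{-1/8}$ factor) are the same ones the paper uses.

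One displayed claim is incorrect as written and should be tightened. For fixed $r_0$, the asymptotic
\[
\overline{\mathbb{P}}^a_{\Omega}\bigl[z^a\leftrightarrow\partial B_{r_0}(z)\bigr] = c\,a^{1/8}\,r_0^{-1/8}(1+o(1)) \quad (a\to 0)
\]
is false: the limit $\lim_{a\to 0}a^{-1/8}\overline{\mathbb{P}}^a_{\Omega}[z^a\leftrightarrow\partial B_{r_0}(z)]$ exists but is a domain-dependent quantity $V(\Omega,z,r_0)$, and only satisfies $V(\Omega,z,r_0)\sim c\,r_0^{-1/8}$ in the further limit $r_0\to 0$. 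Wu's two-point result gives $a^{1/4}$ behavior for a connection probability between two bulk points; extracting the one-arm $a^{1/8}$ requires the additional step of Lemma~\ref{lem::norma_2} (the ratio of the two-point probability to the square of the one-arm probability converges), and even then you obtain this only for the full-plane measure, not $\overline{\mathbb{P}}^a_\Omega$. The paper sidesteps this by never isolating the microscopic factor: it keeps a fixed $\epsilon$ and writes $a^{-1/8}\overline{\mathbb{P}}^a_\Omega[z^a\leftrightarrow\partial\Omega^a]$ as a product of ratios (analogs of Lemmas~\ref{lem::cvg_proba_aux1} and~\ref{lem::norma_1}), each converging separately. Your proposed ``chaining argument to check that the $r_0$-dependence cancels'' then becomes unnecessary, since the product is automatically $\epsilon$-independent. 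Finally, the ``main obstacle'' you flag---separating a microscopic one-arm event from a macroscopic connection to $\partial\Omega^a$---is not new relative to Theorem~\ref{thm::cvg_proba}; exactly the same coupling (Lemma~\ref{lem::separation}) handles it.
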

We denote by $\mathbb{E}_{\Omega}^{(a,\mathbf{p})}$ the expectation of the critical Ising measure on $\Omega^a$ with $\oplus$ boundary condition. Thanks to the Edwards-Sokal coupling (see~\cite{EdwardsSokal}), we have 
\begin{equation} \label{eqn::Ising_mag}
\mathbb{E}_{\Omega}^{(a,\mathbf{p})}\left[\sigma_{z^a}\right]=\overline{\mathbb{P}}_{\Omega}^a\left[z^a\longleftrightarrow\partial\Omega^a\right].
\end{equation} 
Consequently, a combination of Theorem~\ref{thm::onepoint_boundary} and~\eqref{eqn::Ising_mag} gives the scaling limit of the Ising magnetization $\mathbb{E}_{\Omega}^{(a,\mathbf{p})}\left[\sigma_{z^a}\right]$ normalized by $a^{-\frac{1}{8}}$, which was derived in~\cite[Corollary~1.3]{ChelkakHonglerIzyurovConformalInvarianceCorrelationIsing} using discrete complex analysis techniques, with an explicit constant $C_3=2^{\frac{5}{12}}e^{-\frac{3}{2}\zeta'(-1)}$, where $\zeta'$ denotes the derivative of Riemann's zeta function.

\subsection{Connection probabilities involving boundary vertices}
{This section concerns the case in which some (or all) of $z_1^a,\ldots,z_{n}^a$ are on the boundary of $\Omega^a$.
In such a situation, we can derive results similar to those in the previous section, but with different normalization factors for the points on the boundary.} 

For simplicity, we only consider the critical FK-Ising model on the scaled upper half-plane $a\left(\mathbb{H}\cap \mathbb{Z}^2\right)$ with free boundary condition on $\mathbb{R}$. 
We use $\mathbb{P}_{\mathbb{H}}^a$ to denote the corresponding measure. 
\begin{theorem} \label{thm::main_boundary}
		Let $n,\ell$ be non-negative integers such that $n+\ell\geq 2$. Let $z_1,\ldots,z_n\in \mathbb{H}$ and $x_1,\ldots,x_{\ell}\in \mathbb{R}$.
		Suppose that $z_1^a,\ldots,z_{n}^a\in a(\mathbb{H}\cap \mathbb{Z}^2)$ and $x_1^a,\ldots,x_{\ell}^a\in \partial(\mathbb{H}\cap \mathbb{Z}^2)$ are  vertices satisfying $\lim_{a\to 0}z_j^a=z_j, \lim_{a\to 0}x_k^a=x_k$ for $1\leq j\leq n$ and $1\leq k\leq \ell$. Let $\parti$ be a partition of $\{1,2,\ldots,n+\ell\}$ that contains no singletons. Then 
		\begin{equation} \label{eqn::def::R}
			R(\parti;z_1,\ldots,z_n;x_1,\ldots,x_{\ell}):=\lim_{a\to 0} a^{-\frac{n}{8}-\frac{\ell}{2}} \times \mathbb{P}^a_{\mathbb{H}}\left[G(\parti;z_1^a,\ldots,z_n^a,x_1^a,\ldots,x_{\ell}^a)\right]
		\end{equation}
	exists and belongs to $(0,\infty)$. Moreover, if $\varphi$ is a conformal map from $\mathbb{H}$ onto itself such that $\varphi(x_k)\neq \infty$ for  $1\leq k\leq \ell$, {$\varphi(z_j)\neq \infty$ for  $1\leq j\leq n$}, then we have 
	\begin{equation*}
		R(\parti;\varphi(z_1),\ldots,\varphi(z_n);\varphi(x_1),\ldots,\varphi(x_{\ell}))=R(\parti;z_1,\ldots,z_n;x_1,\ldots,x_{\ell})\times \prod_{j=1}^n |\varphi'(z_j)|^{-\frac{1}{8}}\times \prod_{k=1}^{\ell} |\varphi'(x_k)|^{-\frac{1}{2}}.
	\end{equation*}
When the number of vertices is small, we have explicit expressions for $R$ up to multiplicative constants:
\begin{enumerate}[label=\textnormal{(\arabic*)}, ref=(\arabic*)]
	\item For $n=0$ and $\ell\in \{2,3\}$, there exist constants $C_4,C_5\in (0,\infty)$ such that 
	\begin{equation}\label{eqn::FK_bd_two_three}
		R(\parti_2;x_1,x_2)=C_4|x_1-x_2|^{-1},\quad R(\parti_3;x_1,x_2,x_3)=C_5|x_1-x_2|^{-\frac{1}{2}}|x_1-x_3|^{-\frac{1}{2}}|x_2-x_3|^{-\frac{1}{2}}.
	\end{equation}
As a consequence, we have the factorization formula
\begin{equation*}
R(\parti_3;x_1,x_2,x_3)=\frac{C_5}{C_4^{\frac{3}{2}}}\sqrt{R(\parti_2;x_1,x_2)\times R(\parti_2;x_1,x_3)\times R(\parti_2;x_2,x_3)}.
\end{equation*}
\item For $n=\ell=1$, $x_1=0$ and $z_1=re^{\ii\theta}=x+\ii y\in \mathbb{H}$, there exists a constant $C_6\in(0, \infty)$ such that 
\begin{equation*}
	R(\parti;0;z)=C_6\frac{y^{\frac{3}{8}}}{|z|} =C_6\frac{\left(\sin\theta\right)^{\frac{3}{8}}}{r^{\frac{5}{8}}}.
\end{equation*}
\end{enumerate}
\end{theorem}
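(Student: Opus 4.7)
The plan is to follow the strategy of Theorem~\ref{thm::cvg_proba}, suitably extended to handle boundary vertices, where three new ingredients enter the picture: (i) each boundary vertex $x_k^a$ will contribute a normalization factor $a^{1/2}$ instead of $a^{1/8}$, reflecting the boundary one-arm exponent for the critical FK-Ising model; (ii) the macroscopic cluster structure is described by a $\CLE_{16/3}$ in $\mathbb{H}$ together with its boundary-touching loops, whose convergence is provided by~\cite{KemppainenSmirnovFullLimitFKIsing,KemppainenSmirnovBoundaryTouchingLoopsFKIsing}; and (iii) the sharp asymptotic for the boundary one-arm event, which replaces Wu's two-point formula, is derived from Smirnov's FK-Ising fermionic observable.

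For the existence and positivity of $R$, I would fix a small $\epsilon>0$, surround each interior point $z_j$ by the disk $B_\epsilon(z_j)$, and each boundary point $x_k$ by the half-disk $B_\epsilon(x_k)\cap\mathbb{H}$. Conditioning on the cluster structure outside these neighborhoods---which, in the scaling limit, is encoded by $\CLE_{16/3}$ in the Camia--Newman topology---reduces the event $G(\parti;\ldots)$ to finitely many topological configurations of macroscopic loops compatible with $\parti$, each determining a set of approximately independent one-arm events at the marked vertices. The corresponding one-arm probabilities produce an asymptotic factor of order $a^{1/8}$ at each $z_j^a$ and of order $a^{1/2}$ at each $x_k^a$; the near-independence is justified by the spatial mixing property of~\cite{DuminilCopinHonglerNolinRSWFKIsing}, and strict positivity of $R$ follows from RSW estimates applied to at least one contributing pattern. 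The conformal covariance is then a direct consequence of the conformal invariance of $\CLE_{16/3}$ in $\mathbb{H}$ together with the local scaling of one-arm probabilities, which produces precisely the factors $|\varphi'(z_j)|^{-1/8}$ and $|\varphi'(x_k)|^{-1/2}$.

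The explicit formulas in parts~(1) and~(2) then follow from conformal covariance alone, by exploiting the three-parameter group of M\"obius maps of $\mathbb{H}$. For $\ell=2$, the affine maps $\varphi(w)=\lambda(w-x_1)+x_1'$ act transitively on ordered pairs of boundary points, which forces $R(\parti_2;x_1,x_2)=C_4|x_1-x_2|^{-1}$. For $\ell=3$, the identity $|\varphi(x_i)-\varphi(x_j)|=|\varphi'(x_i)|^{1/2}|\varphi'(x_j)|^{1/2}|x_i-x_j|$, valid for any real M\"obius map $\varphi$, implies that $R(\parti_3;x_1,x_2,x_3)\prod_{i<j}|x_i-x_j|^{1/2}$ is invariant and hence constant, which yields~\eqref{eqn::FK_bd_two_three}. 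For $n=\ell=1$ with $x_1=0$, the stabilizer of $0$ in the M\"obius group of $\mathbb{H}$ consists of the maps $\varphi(w)=\lambda w/(1-\mu w)$ with $\lambda>0$ and $\mu\in\mathbb{R}$, and this subgroup acts transitively on $\mathbb{H}$; choosing $\varphi$ that sends $z$ to $\ii$ gives $\lambda=y/|z|^2$ and $|\varphi'(z)|=1/y$, and the transformation rule immediately yields $R(\parti;z;0)=C_6\, y^{3/8}/|z|$.

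The main obstacle is obtaining the sharp boundary one-arm asymptotic at the proper normalization $a^{1/2}$. In the bulk the interior exponent $1/8$ and its multiplicative constant are essentially supplied by Wu's two-point formula, but on the boundary with free boundary conditions there is no analogous off-the-shelf input, so one must extract the sharp asymptotic $\mathbb{P}^a_{\mathbb{H}}[x^a\longleftrightarrow\partial B_r(x^a)]\asymp (a/r)^{1/2}$ from Smirnov's fermionic observable. A secondary technical challenge is to implement spatial mixing and near-independence of the one-arm events uniformly up to the boundary, but this is accessible within the framework of~\cite{DuminilCopinHonglerNolinRSWFKIsing}.
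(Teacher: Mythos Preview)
Your proposal is correct and follows essentially the same approach as the paper: extend the proof of Theorem~\ref{thm::cvg_proba} by treating boundary vertices with the normalization $a^{1/2}$, obtain the sharp boundary one-arm asymptotic from Smirnov's fermionic observable (the paper does this via Proposition~\ref{prop::one_arm_bd} in a Dobrushin domain and then transfers it to $\mathbb{H}$ using spatial mixing), and derive the explicit formulas in (1) and (2) purely from M\"obius covariance. Your computation for the case $n=\ell=1$, using the stabilizer of $0$ and the identity $|\varphi'(z)|=1/y$ when $\varphi(z)=\ii$, is exactly the paper's argument phrased slightly differently (the paper writes down the candidate $f(z)=|z-\bar z|^{3/8}/|z|$ and checks it has the same covariance).
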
 

The normalization factor in~\eqref{eqn::def::R} is related to the boundary one-arm exponent for the FK-Ising model (see~\cite[Theorems~1 and 2]{PolyFKIsing}).
As part of the proof of~\eqref{eqn::scaling_boundary}, we will derive
\begin{equation} \label{eqn::scaling_boundary}
	\lim_{a\to 0}a^{-\frac{1}{2}} \mathbb{P}^a_{\mathbb{H}}\left[0\longleftrightarrow \partial B_1(0)\right]=C_7
\end{equation}
 using Smirnov's FK-Ising fermionic observable (see~\cite{SmirnovHolomorphicFermion}).
 We note that, without~\eqref{eqn::scaling_boundary}, one can still obtain a result like~\eqref{eqn::def::R}, but with $a^{\frac{1}{2}}$ replaced by $\mathbb{P}_{\mathbb{H}}^a\left[0\longleftrightarrow \partial B_1(0)\right]$. This follows from the observation that
 \begin{equation}\label{eqn::scaling_bd}
 	\lim_{a\to 0}\frac{\mathbb{P}_{\mathbb{H}}^a\left[0\longleftrightarrow \partial B_{\epsilon}(0)\right]}{\mathbb{P}_{\mathbb{H}}^a\left[0\longleftrightarrow \partial B_1(0)\right]}=\epsilon^{-\frac{1}{2}},\quad \forall \epsilon>0,
 \end{equation}
which can be derived using the boundary one-arm exponent obtained in~\cite[Theorems~1 and 2]{PolyFKIsing} and the argument in~\cite[Proof of Proposition~4.9]{GarbanPeteSchrammPivotalClusterInterfacePercolation}. 


\subsection{Organization of the rest of the paper and outlook}
In Section~\ref{sec::pre}, we collect some known results that will be used in the proofs of the main results of the paper. In Section~\ref{sec::con::int}, we study the connection probabilities of points in the bulk and prove Theorem~\ref{thm::cvg_proba}. In Section~\ref{sec::con::bou}, we study connection probabilities of points that can be either in the bulk or on the boundary, and prove Theorem~\ref{thm::main_boundary}. The paper ends with an appendix dedicated to the Ising model in a domain with a boundary, in which we provide explicit formulas for some Ising boundary spin correlations.

Theorems~\ref{thm::cvg_proba} and~\ref{thm::main_boundary} consider connection probabilities between points at fixed Euclidean distance from each other, which are related to correlation functions of the Ising spin (magnetization) field.
The Ising energy correlations on the lattice can also be expressed in terms of point-to-point connection probabilities in the FK-Ising model via the Edwards-Sokal coupling.
It would be interesting if one could give a more geometric approach to establish the conformal covariance of Ising energy correlations, as explained in Section~\ref{sec::bac::mot}. A fundamental difference is that, in this case, one would need to consider vertices that are a finite number of lattice spaces apart.

In recent works~\cite{CamiaFengLogPercolationMath,CamiaFengLogPercolationPhysics} on critical Bernoulli site percolation on the triangular lattice, we studied the asymptotic behavior of certain limiting connection probabilities as two points get close to each, identifying the presence of a logarithmic correction to the leading-order power-law behavior.
It would be interesting to extend that analysis to the FK model.
However, the arguments and ideas in~\cite{CamiaFengLogPercolationMath,CamiaFengLogPercolationPhysics} are not sufficient to deal with the critical random-cluster models with $q\neq 1$ (even if we assume the convergence of interfaces towards $\mathrm{CLE}_{\kappa}$) because, when $q\neq 1$, one loses independence and, in particular, one needs to consider the influence of the boundary on the states of edges in the bulk.

In future work, we plan to study the Ising energy field and explore a new proof of conformal covariance of Ising energy correlations at criticality based on the convergence of interfaces in the critical FK-Ising model towards $\mathrm{CLE}_{16/3}$, with the hope that it can be generalized to deal with the Potts model with other values of $q$ (assuming the convergence of interfaces towards $\mathrm{CLE}_{\kappa}$ for the corresponding critical random-cluster model).

\section{Preliminaries} \label{sec::pre}
In this section, we collect some known results that will be used in various places of our proofs. The first is the convergence of FK-Ising interfaces in domains with Dobrushin boundary conditions towards $\mathrm{SLE}_{16/3}$ curves, which was proven in a celebrated group effort summarized in~\cite{CDHKS:Convergence_of_Ising_interfaces_to_SLE}. 
The second is the convergence of FK-Ising loop ensembles towards $\mathrm{CLE}_{16/3}$ given in~\cite{KemppainenSmirnovFullLimitFKIsing,KemppainenSmirnovBoundaryTouchingLoopsFKIsing}. The third is Wu's classic result~\cite{TTWu} on the scaling limit of Ising two-point correlation functions. The last one is the convergence of Smirnov's FK-Ising fermionic observable~\cite{SmirnovHolomorphicFermion}. As explained in the introduction, Wu's result on the Ising two-point function and the convergence of Smirnov's FK-Ising observable are only used to figure out the exact orders of the normalization factors in Theorems~\ref{thm::cvg_proba} and~\ref{thm::main_boundary}.

\subsection{Conformal invariance of interfaces and loop ensembles}
\subsubsection*{Dobrushin domains}

A \textit{discrete Dobrushin domain} is a simply connected subgraph of $\Z^2$, or $a \Z^2$, with two marked boundary points $x_1, x_2$ in counterclockwise order, whose precise definition is given below.

Firstly, we define the \textit{medial Dobrushin domain}.
{Edges are oriented in such a way that the four edges around a vertex of $\mathbb{Z}^2$ (respectively, $(\mathbb{Z}^2)^{\bullet}$) form a circuit that winds around the vertex clockwise (resp., counterclockwise).}
Let $x_1^\diamond, x_{2}^\diamond$ be distinct medial vertices. Let $(x_1^\diamond \, x_2^\diamond), (x_{2}^\diamond \, x_{1}^\diamond)$ be two oriented paths on $(\Z^2)^\diamond$ satisfying the following conditions {(where we use the convention $x_3^\diamond=x_1^\diamond$)}: 
\begin{itemize}[leftmargin=2em]
	
	\item  the two paths are edge-avoiding and satisfy $(x_1^\diamond x_2^\diamond)\cap (x_2^\diamond x_1^{\diamond})=\{x_1^{\diamond},x_2^{\diamond}\}$;
	
	\item  
	the infinite connected component of $(\Z^2)^\diamond\setminus \smash{\overset{2}{\underset{j=1}{\bigcup}}}
	(x_j^\diamond \, x_{j+1}^\diamond)$ lies on the right (resp., left) of the oriented path $(x_1^\diamond \, x_2^\diamond)$ (resp., $(x_2^\diamond\, x_2^\diamond)$). 
\end{itemize}
Given $\{(x_j^\diamond \, x_{j+1}^\diamond) \colon 1\leq j\leq 2\}$, the medial Dobrushin domain $(\Omega^\diamond; x_1^\diamond, x_{2}^\diamond)$ is defined 
as the subgraph of $(\Z^2)^\diamond$ induced by the vertices lying on or enclosed by the circuit obtained by concatenating $(x_1^\diamond x_2^\diamond)$ and $(x_2^\diamond x_1^\diamond)$.
For each $j \in \{1,2\}$, the \textit{outer corner}  $w_{j}^{\diamond}\in (\mathbb{Z}^2)^\diamond\setminus\Omega^\diamond$ is defined to be a medial vertex adjacent to $x_j^\diamond$, and the \textit{outer corner edge} $e_j^\diamond$ is defined to be the medial edge connecting {$x_j^{\diamond}$ and $w_{j}^{\diamond}$}.

\smallbreak

Secondly, we define the \textit{primal Dobrushin domain} 
$(\Omega;x_1, x_{2})$ induced by $(\Omega^\diamond;x_1^\diamond,x_{2}^\diamond)$ as follows: 
\begin{itemize}[leftmargin=2em]
	\item 
	the edge set $E(\Omega)$ consists of edges passing through endpoints of medial edges in 
	$E(\Omega^\diamond)\setminus (x_2^\diamond x_1^\diamond)$; 
	
	\item  
	the vertex set $V(\Omega)$ consists of endpoints of edges in $E(G)$; 
	
	\item  
	the marked boundary vertex $x_j$ is defined to be the vertex in $\Omega$ nearest to $x_j^\diamond$ for each $j=1,2$; 
	
	\item  
	the arc $(x_1 \, x_2)$ is the set of edges whose midpoints are vertices in $(x_1^\diamond \, x_2^\diamond)\cap \partial \Omega$.
\end{itemize}

Lastly, we define the \textit{dual Dobrushin {domain} }$(\Omega^{\bullet};x_1^{\bullet},x_{2}^{\bullet})$ induced by $(\Omega^\diamond; x_1^\diamond,x_{2}^\diamond)$ in a similar way. 
More precisely, $\Omega^{\bullet}$ is the subgraph of $(\Z^2)^{\bullet}$ with edge set consisting of edges passing through endpoints of medial edges in $E(\Omega^\diamond)\setminus (x_2^\diamond x_1^\diamond)$ and vertex set consisting of the endpoints of these edges. 
The marked boundary vertex $x_j^{\bullet}$ is defined to be the vertex in $V(\Omega^{\bullet})$ nearest to $x_j^\diamond$ for $j=1,2$. 
The boundary arc $(x_{2}^{\bullet} \, x_{1}^{\bullet})$ is the set of edges whose midpoints are vertices in $(x_{2}^\diamond \, x_{1}^\diamond)\cap \Omega^\diamond$.

\subsection*{Boundary conditions, loops and interfaces}
{We will consider the critical FK-Ising model on $\Omega^a$ with two types of boundary conditions:
\begin{enumerate}
    \item free boundary conditions,
    \item Dobrushin boundary conditions, that is, free on $(x_2^a x_1^a)$ and wired on $(x_1^a x_2^a)$.
\end{enumerate}
We note that the first type can be considered a degenerate case of the second, with $x_1^a=x_2^a$.}

Let $\omega\in \{0,1\}^{E(\Omega^a)}$ be a configuration of the FK-Ising model on $\Omega^a$. 
{For both types of boundary conditions mentioned above, we can draw edge-self-avoiding interfaces on $\Omega^{a,\diamond}$ using the edges of the medial lattice as follows:
\begin{itemize}
    \item each edge belongs to a unique interface,
    \item edges are connected in such a way that no interface crosses an open primal edge or open dual edge.
\end{itemize}

In the case of free boundary conditions, the edges of the medial lattice form a collection $\Gamma^a$ of loops that do not cross each other or themselves.
In the (non-degenerate) case of Dobrushin boundary conditions, in addition to loops, there is an edge-self-avoiding interface $\gamma^a$ connecting the outer corners $w_1^{a,\diamond}$ and $w_2^{a,\diamond}$ on the medial Dobrushin domain $(\Omega^{a,\diamond};x_1^{a,\diamond},x_2^{a,\diamond})$. Both $\Gamma^a$ and $\gamma^a$ have a conformally invariant scaling limit, and we will make use of this fact.}

\subsection*{Topologies and convergence of interfaces}
{In this section, we specify the topologies used to formulate the convergence of loops and interfaces and the convergence of collections of loops.}

First, as in~\cite{Cam24}, we define a distance function $\Delta$ on $\mathbb{C}\times \mathbb{C}$ given by
\begin{equation*}
	\Delta(u,v):=\inf_{\phi}\int_0^a \frac{|\phi'(t)|}{1+|\phi(t)|^2}\ud t,
\end{equation*}
where the infimum is over all differentiable curves $\phi:[0,1]\to \mathbb{C}$ with $\phi(0)=u$ and $\phi(1)=v$. Note that, if we write $\widehat{\mathbb{C}}:=\mathbb{C}\cup \{\infty\}$ and extend $\Delta$ to be a function on $\widehat{\mathbb{C}}\times \widehat{\mathbb{C}}$, then $\left(\widehat{\mathbb{C}}, \Delta\right)$ is compact. 

Second, for two planar continuous oriented curves $\gamma_1,\gamma_2:[0,1]\to \mathbb{C}$, we define
\begin{equation} \label{eqn::def_dist}
	\dist\left(\gamma_1,\gamma_2\right):=\inf_{\psi,\tilde{\psi}}\sup_{t\in[0,1]} \Delta\left(\gamma_1(\psi(t)),\gamma_2(\tilde{\psi}(t))\right),
\end{equation}
where the infimum is taken over all increasing homeomorphisms $\psi,\tilde{\psi}:[0,1]\to [0,1]$.

Third, for two sets of loops, $\Gamma_1$ and $\Gamma_2$, we define 
\begin{equation} \label{eqn::def_Dist}
	\Dist\left(\Gamma_1,\Gamma_2\right):=\inf\left\{\epsilon>0: \forall \gamma_1\in \Gamma_1,\enspace \exists \gamma_2\in \Gamma_2 \text{ s.t. }\dist(\gamma_1,\gamma_2)\leq \epsilon \text{ and vice versa}\right\}. 
\end{equation}

\begin{theorem}\label{thm::SLE}(\cite{CDHKS:Convergence_of_Ising_interfaces_to_SLE})
	Let $\Omega\subseteq \mathbb{C}$ be a simply connected domain with locally connected boundary and let $x_1,x_2\in \partial\Omega$ be $2$ distinct points. Let $(\Omega^a;x_1^a,x_2^a)$ be a sequence of primal Dobrushin {domains} satisfying: $\Omega^a$ converges to $\Omega$ under the Hausdorff metric and $x_1^a\to x_1, x_2^a\to x_2$, as $a\to 0$. Consider the critical FK-Ising model on $(\Omega^a;x_1^a,x_2^a)$ with Dobrushin boundary conditions described above. Then the interface $\gamma^a$ converges weakly, as $a\to 0$, under the topology induced by $\dist$ (see~\eqref{eqn::def_dist}) towards $\mathrm{SLE}_{16/3}$ on $\Omega$ from $x_1$ to $x_2$ (for more details on $\mathrm{SLE}$, see~\cite{LawlerConformallyInvariantProcesses} or~\cite{RohdeSchrammSLEBasicProperty}). 
\end{theorem}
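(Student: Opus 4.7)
The plan is to follow the Smirnov strategy that first proved conformal invariance for FK-Ising: construct a discrete fermionic observable on the medial domain, prove its ``s-holomorphicity'' and boundary behavior, pass to a continuous holomorphic limit, and then combine this with tightness to identify the interface $\gamma^a$ with $\SLE_{16/3}$ via its L\"owner driving function.

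Concretely, for each medial edge $e^\diamond \in \Omega^{a,\diamond}$ I would define Smirnov's FK-Ising parafermionic observable $F^a(e^\diamond)$ as the expectation (with suitable complex winding weight) of a fermionic indicator that the exploration interface from $e_1^\diamond$ passes through $e^\diamond$. The key algebraic step is to show that, precisely at the self-dual value $p_c(2)=\sqrt{2}/(1+\sqrt{2})$, the observable satisfies Smirnov's s-holomorphicity relation around every vertex of $\Omega^{a,\diamond}$: the projections of $F^a$ onto the two lines determined by the medial-edge orientation agree on the two medial edges incident to any square-lattice vertex. Together with the explicit boundary values on the wired and free arcs, this yields a discrete Riemann--Hilbert problem whose solutions are controlled uniformly in $a$ (discrete Beurling and Harnack estimates for s-holomorphic functions), and one identifies the $a\to 0$ limit with $\sqrt{\varphi'(\cdot)}$ for the conformal map $\varphi:\Omega\to\HH$ sending $(x_1,x_2)$ to $(0,\infty)$.

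Next, to get convergence of the \emph{curve} and not just of a single observable, I would establish tightness of the laws of $\gamma^a$ in the topology induced by $\dist$ of \eqref{eqn::def_dist}. For this I would apply the Kemppainen--Smirnov precompactness criterion, whose hypotheses reduce to uniform bounds on the probability that the interface makes an unforced crossing of a conformal annulus; these bounds follow from the strong RSW estimates for critical FK-Ising established in~\cite{DuminilCopinHonglerNolinRSWFKIsing}, together with the FKG inequality. Along any subsequential limit, the curve can be parametrized by half-plane capacity and encoded by a continuous L\"owner driving function $W_t$.

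Finally, I would identify $W_t = \sqrt{16/3}\, B_t$ by exploiting the observable as a martingale. Evaluated in the slit domain obtained by removing the interface up to time $t$ and then mapped conformally to $\HH$, the observable $F^a$ gives (in the limit) an explicit function of $W_t$ whose martingale property forces $W_t$ to be a continuous martingale with quadratic variation $\frac{16}{3}t$ by L\'evy's characterization; hence $\gamma^a \Rightarrow \SLE_{16/3}$. The main obstacle is the s-holomorphicity step and its uniform-regularity consequences: establishing the exact discrete Cauchy--Riemann identity at $p_c(2)$ requires the fermionic winding factor to be tuned precisely to the FK-Ising parameters, and then converting the local identity into global Hausdorff-type convergence of the observable requires delicate discrete complex-analytic estimates up to the boundary, especially near rough points of $\partial\Omega$, where one must appeal to a Carath\'eodory-type approximation argument to transfer convergence from smooth approximating domains to $\Omega$ itself.
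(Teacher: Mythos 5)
The paper does not prove this statement itself: it is quoted verbatim from \cite{CDHKS:Convergence_of_Ising_interfaces_to_SLE} (with the underlying machinery of \cite{SmirnovHolomorphicFermion,ChelkakSmirnovIsing} and the Kemppainen--Smirnov tightness framework), and your sketch --- s-holomorphic fermionic observable at $p_c(2)$, Riemann--Hilbert boundary value problem, RSW-based precompactness of the curves, and martingale identification of the L\"owner driving process as $\sqrt{16/3}\,B_t$ --- is precisely the strategy of those cited works. So your proposal is correct in outline and takes essentially the same route as the paper's source, with the caveat that it is a high-level sketch whose hardest steps (the discrete complex-analytic estimates up to a possibly rough boundary) are acknowledged but not carried out.
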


\begin{theorem} \label{thm::CLE}(\cite[Theorem~1.1]{KemppainenSmirnovFullLimitFKIsing}, ~\cite[Theorem~1.1]{KemppainenSmirnovBoundaryTouchingLoopsFKIsing})
	Assume the same setup as in Theorem~\ref{thm::cvg_proba}.   Consider the critical FK-Ising model on $\Omega^a$ with free boundary conditions. Then the collection of loops $\Gamma^a$  converges weakly as $a\to0$ under the topology induced by $\Dist$ (see~\eqref{eqn::def_Dist}). We denote the limiting measure by $\mathbb{P}=\mathbb{P}_{\Omega}$. Moreover, $\mathbb{P}_{\Omega}$ is conformally invariant. For wired boundary conditions, the corresponding conclusions also hold and we denote by $\overline{\mathbb{P}}=\overline{\mathbb{P}}_{\Omega}$ the limiting measure. 
\end{theorem}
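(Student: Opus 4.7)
\noindent\textbf{Proof plan for Theorem~\ref{thm::CLE}.} The plan has two parts: first establish tightness of the laws of $\Gamma^a$ in the topology induced by $\Dist$, and then identify every subsequential limit by using the interface convergence of Theorem~\ref{thm::SLE} through an exploration procedure.

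For tightness, I would invoke the Aizenman--Burchard framework adapted to collections of loops. The key input is uniform control of annulus crossing probabilities for the critical FK-Ising model, which is provided by the RSW estimates established in~\cite{DuminilCopinHonglerNolinRSWFKIsing} (valid for both free and wired boundary conditions, and for mixed conditions at a macroscopic distance). These estimates imply that, uniformly in $a$, no loop in $\Gamma^a$ can make many crossings of a small annulus, and that macroscopic loops are locally equicontinuous in the sense required by Aizenman--Burchard. Combined with the fact that the space of compact collections of loops endowed with $\Dist$ is Polish and compact when restricted to loops of diameter bounded below (and the number of loops of diameter $\geq \epsilon$ is bounded uniformly in $a$ by the same RSW inputs), this gives precompactness of the family $\{\mathrm{law}(\Gamma^a)\}_a$.

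For the identification of the limit, I would use a branching exploration tree. Fix a countable dense set $\{z_k\}\subset\Omega$. For each $z_k$, one can define a pair of discrete interfaces that together trace the outer boundary of the FK cluster containing $z_k$ (equivalently, the loop surrounding $z_k$ in $\Gamma^a$): modify the boundary conditions to create a Dobrushin-type setup with target near $z_k$, and apply Theorem~\ref{thm::SLE} to get convergence of each exploration curve to an $\SLE_{16/3}$ (or $\SLE_{16/3}(\rho)$ after conditioning on previously discovered loops) in the appropriate domain. Iterating this over $\{z_k\}$ recovers all macroscopic loops in the limit; the uniform control of small loops from the tightness step shows that nothing is lost in the limit. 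The collection obtained in this way satisfies the two characterizing properties of $\CLE_{16/3}$ (domain Markov property and $\SLE_{16/3}$-type loop exploration), hence every subsequential limit agrees with $\CLE_{16/3}$ and conformal invariance is inherited from the conformal invariance of $\SLE_{16/3}$. The wired case is analogous, with the additional care that the outermost loops touch $\partial\Omega^a$; here one starts the exploration from boundary points and uses the boundary-touching loop convergence arguments of~\cite{KemppainenSmirnovBoundaryTouchingLoopsFKIsing}.

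The main obstacle, and the one that required the most new ideas in~\cite{KemppainenSmirnovFullLimitFKIsing,KemppainenSmirnovBoundaryTouchingLoopsFKIsing}, is the passage from the convergence of a \emph{single} interface (Theorem~\ref{thm::SLE}) to the convergence of the \emph{whole} collection of loops. Two specific points are delicate: (i) controlling conditional laws along the exploration so that, after discovering finitely many loops, the remaining configuration is still a critical FK-Ising model in a random domain with boundary conditions whose limit is tractable; this requires a uniform modulus-of-continuity estimate on the boundary of the unexplored region, which again reduces to RSW. And (ii), in the free boundary case, handling loops that touch $\partial\Omega$: such loops are limits of macroscopic clusters attached to the wired arcs introduced by the exploration, and ruling out pathological accumulations near the boundary requires the boundary RSW bounds of~\cite{DuminilCopinHonglerNolinRSWFKIsing} together with an Aizenman--Burchard-type argument on the boundary.
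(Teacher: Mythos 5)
You should first note that the paper does not prove Theorem~\ref{thm::CLE} at all: it is imported verbatim from \cite{KemppainenSmirnovFullLimitFKIsing,KemppainenSmirnovBoundaryTouchingLoopsFKIsing}, so there is no internal argument to compare against. Your outline is a reasonable reconstruction of the strategy of those cited works (tightness from RSW-type crossing estimates, identification of the limit through a branching exploration built on the single-interface convergence of Theorem~\ref{thm::SLE}, and separate treatment of boundary-touching loops), and in that sense it is aimed at the right target. But as a proof it has genuine gaps, because the steps you list as ``delicate'' are precisely the content of the theorem rather than technical afterthoughts.

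Concretely: (i) you cannot ``apply Theorem~\ref{thm::SLE}'' along the exploration, since after discovering loops the unexplored region is a \emph{random} domain with rough boundary and mixed boundary conditions, whereas Theorem~\ref{thm::SLE} is stated for a deterministic sequence of Dobrushin domains converging to a fixed domain; what is needed is a version of the convergence that is uniform over the class of domains produced by the exploration, together with control of how boundary conditions on the discovered loops translate in the limit. Establishing this uniformity (and the re-emergence of the spatial Markov property in the continuum) is the main technical body of \cite{KemppainenSmirnovFullLimitFKIsing}, and your plan only names it. (ii) The identification step ``the limit satisfies the two characterizing properties of $\CLE_{16/3}$, hence equals $\CLE_{16/3}$'' presupposes a uniqueness/characterization theorem that you neither state precisely nor verify the hypotheses of; in \cite{KemppainenSmirnovFullLimitFKIsing} the limit is instead described directly as a branching $\SLE_{16/3}(16/3-6)$ exploration tree, and conformal invariance is deduced from that explicit description rather than from an axiomatic characterization. (iii) For the topology induced by $\Dist$ in~\eqref{eqn::def_Dist}, every discrete loop (including microscopic ones) must be matched to a nearby continuum loop and vice versa, so the tightness step must also control the full hierarchy of small loops, not only the macroscopic ones; this again requires more than the statement that ``nothing is lost in the limit.'' In short, your proposal is a faithful road map of the external proof, but the load-bearing estimates are asserted rather than established, so it cannot replace the citation the paper relies on.
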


We emphasize that the hypothesis on the convergence of discrete domains is not optimal here, but the present version will be sufficient for our purposes.

\subsection{Scaling limit of two-point Ising correlation functions and FK-Ising connection probabilities} \label{sec::two_point_Ising}
{Consider the critical Ising measure on $a\mathbb{Z}^2$ and let $\mathbb{E}_{\mathbb{Z}^2}^a$ denote the corresponding expectation.}

\begin{theorem} \label{thm::two_point_Ising}(\cite{TTWu,WUTwopoint})
	Let $y_1^a, y_2^a\in a\mathbb{Z}^2$ satisfy $\lim_{a\to 0}y_1^a=0$ and $\lim_{a\to 0}y_2^a=1$. Then we have 
	\begin{equation} \label{eqn::two_point_Ising}
		\lim_{a\to 0}a^{-\frac{1}{4}}\times \mathbb{E}_{\mathbb{Z}^2}^a\left[\sigma_{y_1^a}\sigma_{y_2^a}\right]=C_8,
	\end{equation}
where $C_8>0$ is a universal constant.   
\end{theorem}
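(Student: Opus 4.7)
The plan is to combine the classical Kaufman--Onsager--Yang fermionic representation of the Ising two-point function with Toeplitz determinant asymptotics, supplemented by rotational invariance of the scaling limit in order to handle arbitrary directions of approach.

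First, by the scaling of the measure and translation invariance on $\mathbb{Z}^2$, writing $y_j^a = a\,\tilde y_j^a$ with $\tilde y_j^a \in \mathbb{Z}^2$ and setting $\bs v^a := \tilde y_2^a - \tilde y_1^a$ (so that $|\bs v^a| = (1+o(1))/a$), the claim is equivalent to
\[
|\bs v|^{1/4} \, \mathbb{E}^1_{\mathbb{Z}^2}[\sigma_0 \sigma_{\bs v}] \longrightarrow C_8 \qquad \text{as } |\bs v|\to\infty,
\]
uniformly in the angular direction of $\bs v$. For the aligned case $\bs v = (n,0)$, the Onsager--Kaufman fermionization of the transfer matrix (via a Jordan--Wigner transformation) expresses $\mathbb{E}^1_{\mathbb{Z}^2}[\sigma_{(0,0)} \sigma_{(n,0)}]$ as an $n \times n$ Toeplitz determinant $D_n(\phi_c)$ with an explicit symbol $\phi_c$ on the unit circle. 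At the critical temperature, $\phi_c$ degenerates into a function with a single Fisher--Hartwig singularity of index $1/2$, and applying the Fisher--Hartwig asymptotic formula (now rigorous, due to Deift--Its--Krasovsky and others) yields
\[
D_n(\phi_c) = C_W \, n^{-1/4} \bigl(1 + o(1)\bigr),
\]
with $C_W$ an explicit constant expressible through the Barnes $G$-function.

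To pass from aligned to arbitrary directions of $\bs v^a$, I would invoke rotational invariance of the two-point function in the scaling limit. This follows either from the conformal covariance of Ising correlations established by Chelkak--Hongler--Izyurov via discrete holomorphic observables, or from the recent rotational invariance result of Duminil-Copin and collaborators for the critical FK-Ising model (coupled to the Ising spin model through the Edwards--Sokal representation). Either route forces $\lim_{|\bs v|\to\infty}|\bs v|^{1/4}\,\mathbb{E}^1_{\mathbb{Z}^2}[\sigma_0\sigma_{\bs v}]$ to be independent of the direction of $\bs v$ and thus equal to the aligned value $C_W$; setting $C_8 := C_W$ finishes the argument.

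The main obstacle is precisely this direction-independence step: the Toeplitz analysis by itself only produces asymptotics along lattice axes, and matching the constant across directions requires substantial additional input. Classical approaches (Wu, McCoy--Wu) handle the diagonal direction by a separate Toeplitz identity and then interpolate via Painlev\'e transcendent analysis, which is technically demanding; modern approaches route through the s-holomorphic observables of Smirnov and Chelkak--Izyurov, whose scaling limits encode the full conformal structure but require delicate discrete complex analysis to establish.
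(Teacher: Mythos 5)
The paper does not actually prove this statement: it is imported from Wu \cite{TTWu} and McCoy--Wu \cite{WUTwopoint}, and that is precisely why the step you flag as ``the main obstacle'' is the crux rather than a technicality. Your reduction and the axis-aligned analysis (Jordan--Wigner/Toeplitz representation of $\langle\sigma_{(0,0)}\sigma_{(n,0)}\rangle$, Fisher--Hartwig asymptotics \`a la Deift--Its--Krasovsky giving $C_W n^{-1/4}$) are the standard classical route and are fine as far as they go, but they only cover displacements lying on a lattice axis. In the theorem's setting the rescaled displacement $(y_2^a-y_1^a)/a$ is a lattice vector whose direction merely tends to the horizontal; its vertical component may still diverge as $a\to 0$. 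What is needed is $\langle\sigma_{(0,0)}\sigma_{(M,N)}\rangle = C_8\,(M^2+N^2)^{-1/8}\,(1+o(1))$ uniformly over $N=o(M)$, not just for $N=0$, and nothing in your sketch produces this.

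Neither of your proposed patches closes this gap as stated. The rotational-invariance result of Duminil-Copin et al.\ gives asymptotic rotational invariance of large-scale observables of the critical FK model; it does not by itself give the existence of the limit of $|v|^{1/4}\langle\sigma_0\sigma_v\rangle$ along a given direction (or along a sequence of nearly-horizontal lattice directions), which you must have in hand before a symmetry argument can identify the constant with the aligned value $C_W$; extracting a statement at the precise $|v|^{-1/4}$ scale from that input is exactly the missing work. Invoking Chelkak--Hongler--Izyurov (or Pinson's rotational invariance of the spin-spin correlation) is logically legitimate, but those references already contain the full content of the theorem -- full-plane two-point asymptotics with a universal, direction-independent constant -- so at that point the Toeplitz computation is redundant and your argument collapses to a citation, which is exactly what the paper does (citing Wu and McCoy--Wu, where the general-direction asymptotics are obtained by a substantially more involved analysis, not by an abstract symmetry argument). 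As an independent proof, the proposal therefore has a genuine gap at its decisive step.
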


\begin{corollary} \label{coro::two_point_FK}
Let $y_1^a, y_2^a\in a\mathbb{Z}^2$ satisfy $\lim_{a\to 0}y_1^a=0$ and $\lim_{a\to 0}y_2^a=1$. Then we have
	\begin{equation*}
		\lim_{a\to 0} a^{-\frac{1}{4}}\times \mathbb{P}_{\mathbb{Z}^2}^a\left[y_1^a\longleftrightarrow y_2^a\right]=C_8,
	\end{equation*}
where $C_8$ is the same universal constant in~\eqref{eqn::two_point_Ising}. 
\end{corollary}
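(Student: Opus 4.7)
}
The plan is to reduce the statement to Wu's theorem (Theorem~\ref{thm::two_point_Ising}) via the Edwards--Sokal coupling. Recall that on a finite graph (and, by taking the thermodynamic limit, on $a\mathbb{Z}^2$ at criticality, where the Gibbs measure is unique for $q=2$), the Edwards--Sokal coupling produces a joint law of an FK-Ising configuration $\omega$ and an Ising spin configuration $\sigma$ such that, conditionally on $\omega$, the spins are constant on each FK-cluster with i.i.d.~uniform signs. In particular,
\begin{equation*}
\mathbb{E}\bigl[\sigma_{y_1^a}\sigma_{y_2^a}\,\big|\,\omega\bigr]
= \mathbb{1}\{y_1^a \longleftrightarrow y_2^a \text{ in }\omega\},
\end{equation*}
and taking expectations yields the standard identity
\begin{equation*}
\mathbb{E}^a_{\mathbb{Z}^2}\bigl[\sigma_{y_1^a}\sigma_{y_2^a}\bigr]
= \mathbb{P}^a_{\mathbb{Z}^2}\bigl[y_1^a \longleftrightarrow y_2^a\bigr].
\end{equation*}

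The first step is to justify this identity on the full plane. Since the critical FK-Ising and Ising models on $a\mathbb{Z}^2$ each admit a unique infinite-volume Gibbs measure (at $q=2$ and $p=p_c(2)$), one obtains the identity above by taking the limit, along an exhaustion by finite boxes, of the corresponding finite-volume identities under (say) free boundary conditions; both sides converge because two-point correlations and two-point connection probabilities are continuous under the weak convergence of the respective measures (monotonicity in boundary conditions and DLR/FKG give the requisite tightness).

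The second and final step is simply to invoke Theorem~\ref{thm::two_point_Ising}: multiplying both sides of the Edwards--Sokal identity by $a^{-1/4}$ and letting $a \to 0$,
\begin{equation*}
\lim_{a\to 0} a^{-\frac{1}{4}}\, \mathbb{P}^a_{\mathbb{Z}^2}\bigl[y_1^a \longleftrightarrow y_2^a\bigr]
= \lim_{a\to 0} a^{-\frac{1}{4}}\, \mathbb{E}^a_{\mathbb{Z}^2}\bigl[\sigma_{y_1^a}\sigma_{y_2^a}\bigr]
= C_8.
\end{equation*}

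There is no real obstacle here: the corollary is essentially a reformulation of Wu's asymptotic in the FK language, and the only point that requires a word of justification is the passage from the finite-volume Edwards--Sokal identity to the infinite-volume one, which is standard given uniqueness of the critical Gibbs measure for $q=2$.
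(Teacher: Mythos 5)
Your proposal is correct and follows essentially the same route as the paper: apply the Edwards--Sokal identity $\mathbb{P}^a_{\mathbb{Z}^2}[y_1^a \longleftrightarrow y_2^a] = \mathbb{E}^a_{\mathbb{Z}^2}[\sigma_{y_1^a}\sigma_{y_2^a}]$ and then invoke Theorem~\ref{thm::two_point_Ising}. Your extra remarks on passing the identity to infinite volume are a harmless elaboration of a step the paper takes for granted.
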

\begin{proof}
The Edwards-Sokal coupling (see~\cite{EdwardsSokal}) implies that
\begin{equation} \label{eqn::two_point_aux1}
	\mathbb{P}^a_{\mathbb{Z}^2}\left[y_1^a\longleftrightarrow y_2^a\right]=\mathbb{E}^a_{\mathbb{Z}^2}\left[\sigma_{y_1^a}\sigma_{y_2^a}\right].
\end{equation}
The desired conclusion follows readily from~\eqref{eqn::two_point_aux1} and Theorem~\ref{thm::two_point_Ising}.
\end{proof}

\subsection{Scaling limit of Smirnov's FK-Ising fermionic observable} \label{sec::FKObservable}
To deal with connection probabilities involving boundary vertices, Theorem~\ref{thm::two_point_Ising}, which is a main ingredient in the proof of Theorem~\ref{thm::cvg_proba}, is not sufficient.
A manifestation of this fact is that the boundary arm exponents are typically different than the interior ones.
This affects the normalization of crossing probabilities involving boundary vertices.
For this case, unable to use Theorem~\ref{thm::two_point_Ising}, we will find the exact order of the proper normalization using Smirnov's FK-Ising fermionic observable (see~\cite{SmirnovHolomorphicFermion}), as explained below.

\begin{figure}
    \centering
    \includegraphics[width=0.45\linewidth]{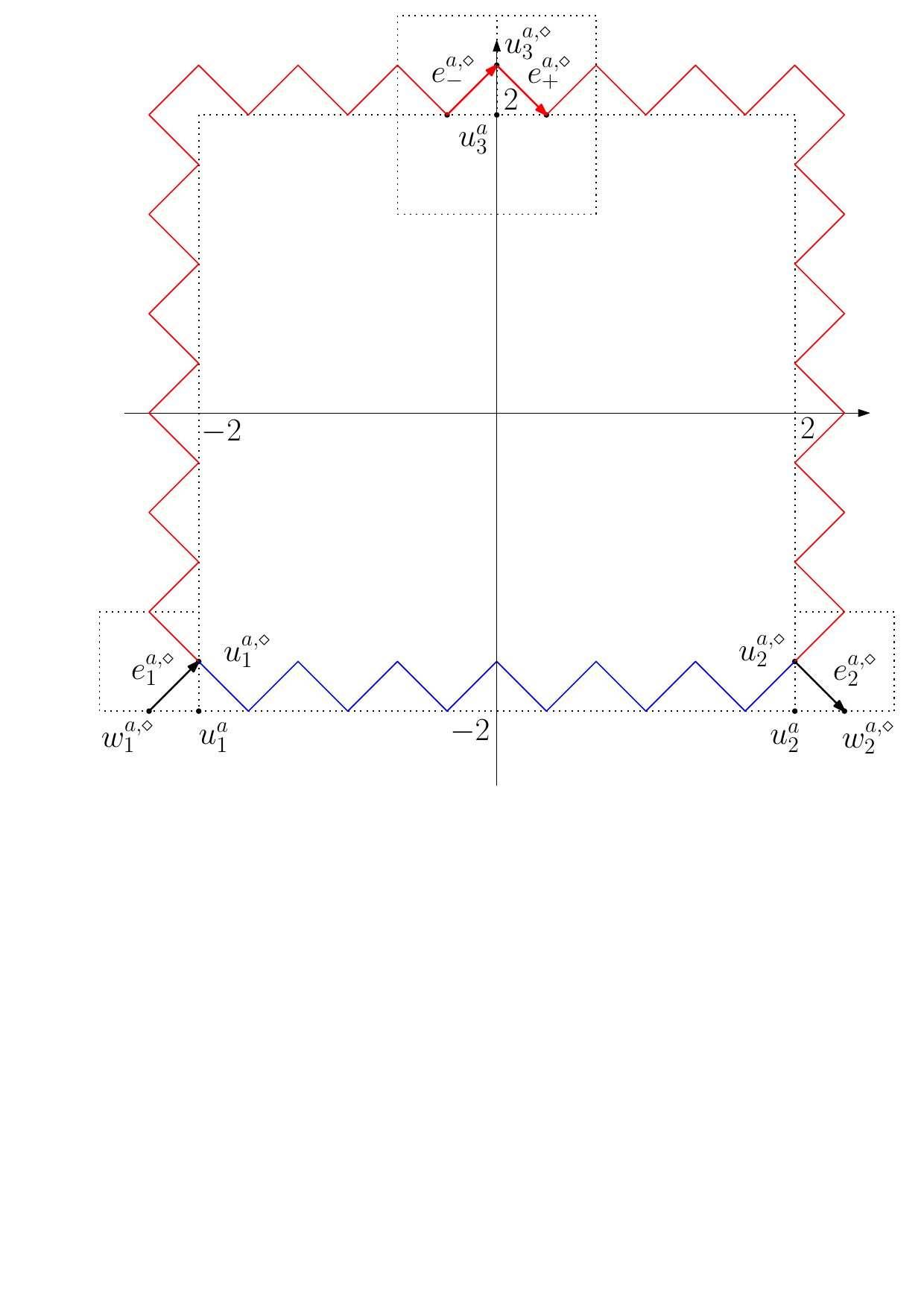}
    \caption{An illustration of the Dobrushin domain $\Box_2^a$ and the vertices, medial vertices, and (oriented) medial edges described in Section~\ref{sec::FKObservable}. Recall that the four medial edges
around a vertex of $a\mathbb{Z}^2$ (respectively, $a(\mathbb{Z}^2)^{\bullet}$) form a circuit that winds around the vertex clockwise (resp.,
counterclockwise).}
    \label{fig:FKObservable}
\end{figure}

Let $\Box_2^a=[-2,2]^2 \cap a\mathbb{Z}^2$, consider the the Dobrushin domain $(\Box_2^a;u_1^a,u_2^a)$ and recall the definitions of the medial vertices $u_{1}^{a,\diamond}$ and $u_{2}^{a,\diamond}$ adjacent to $u_{1}^{a}$ and $u_{2}^{a}$ and of the outer corners $w_{1}^{a,\diamond}$ and $w_{2}^{a,\diamond}$ adjacent to $u_{1}^{a,\diamond}$ and $u_{2}^{a,\diamond}$, respectively (see Figure~\ref{fig:FKObservable}).
\begin{proposition} \label{prop::one_arm_bd}
	Let $u_1^a$ and $u_2^a$ be the southwest and southeast corners of the box $\Box_2^a$, respectively. Let $u_3^a=\partial \Box_2^a\cap \{\ii y: y>0\}$. Consider the critical FK-Ising model on $(\Box_2^a;u_1^a, u_2^a)$ with Dobrushin boundary conditions and denote by $\mathbb{P}_*^a$ the corresponding measure. Then there exists a universal constant $C_9$ such that 
	\begin{equation} \label{eqn::sharp_boundary}
		\lim_{a\to 0} a^{-\frac{1}{2}} \times\mathbb{P}_*^a\left[u_3^a\longleftrightarrow (u_1^a u_2^a)\right]=C_9.
	\end{equation}
\end{proposition}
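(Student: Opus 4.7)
The plan is to identify the connection event $\{u_3^a\longleftrightarrow (u_1^a u_2^a)\}$ with the event that the FK-Ising interface $\gamma^a$ passes through a specific medial edge near $u_3^a$, and then to extract its scaling by evaluating Smirnov's fermionic observable at that edge and invoking its convergence.

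Let $F^a:E(\Box_2^{a,\diamond})\to\mathbb{C}$ be Smirnov's observable associated with the Dobrushin domain $(\Box_2^a;u_1^a,u_2^a)$, defined by
\begin{equation*}
    F^a(e) \;=\; \mathbb{E}_*^a\!\left[\mathbb{1}_{\{e\in\gamma^a\}}\,\exp\!\left(\tfrac{\ii}{2}\,W(\gamma^a,e,w_2^{a,\diamond})\right)\right],
\end{equation*}
where $W(\gamma^a,e,w_2^{a,\diamond})$ denotes the total winding of $\gamma^a$ from $e$ to the terminal outer corner. By the results of~\cite{SmirnovHolomorphicFermion} together with the boundary refinements used in~\cite{CDHKS:Convergence_of_Ising_interfaces_to_SLE}, the rescaled observable $a^{-1/2}F^a$ converges, uniformly on compact subsets of $\overline{\Box_2}\setminus\{u_1,u_2\}$, to an explicit holomorphic function $f=c_0\sqrt{\phi'}$, where $\phi:\Box_2\to\mathbb{H}$ is the conformal map sending $(u_1,u_2)$ to $(0,\infty)$ with the appropriate branch and $c_0$ is an absolute constant.

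The first step is the geometric identification. Let $e_3^a$ be the medial edge of $\Box_2^{a,\diamond}$ adjacent to $u_3^a$ on the bulk side. Then
\begin{equation*}
    \{u_3^a\longleftrightarrow (u_1^a u_2^a)\} \;=\; \{e_3^a\in\gamma^a\},
\end{equation*}
possibly up to a combinatorial factor coming from the pair of medial edges flanking $u_3^a$. This follows from the standard fact that $\gamma^a$ separates the primal cluster of the wired arc from the dual cluster of the free arc, hence $\gamma^a$ reaches a medial edge just inside $u_3^a$ precisely when the primal cluster of $(u_1^a u_2^a)$ extends up to $u_3^a$.

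The second step exploits the fact that $e_3^a$ is a boundary medial edge: whenever $\gamma^a$ passes through $e_3^a$, the winding $W(\gamma^a,e_3^a,w_2^{a,\diamond})$ takes a deterministic value fixed by the boundary geometry and the orientation conventions, so the phase in $F^a(e_3^a)$ trivializes. Consequently, $|F^a(e_3^a)|$ equals $\mathbb{P}_*^a[e_3^a\in\gamma^a]$ up to an explicit rational factor, and combining this with the pointwise limit $a^{-1/2}|F^a(e_3^a)|\to|f(u_3)|$ yields
\begin{equation*}
    \lim_{a\to 0} a^{-1/2}\,\mathbb{P}_*^a[u_3^a\longleftrightarrow (u_1^a u_2^a)] \;=\; C_9,
\end{equation*}
with $C_9$ an explicit positive constant depending only on the fixed geometric data $(\Box_2, u_1, u_2, u_3)$. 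The main obstacle is the careful discrete bookkeeping required to make these identifications precise: locating the correct medial edge, computing the deterministic winding at a boundary edge with the right orientation conventions, and matching normalizations with the convergence theorem. These steps are technical but follow the standard blueprint of s-holomorphic analyses of FK-Ising observables on Dobrushin domains; once they are settled, the proposition reduces to a direct application of the convergence of Smirnov's observable.
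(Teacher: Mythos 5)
Your proposal is correct and follows essentially the same route as the paper: identify $\{u_3^a\longleftrightarrow (u_1^au_2^a)\}$ with the interface $\gamma^a$ passing through the boundary medial edge at $u_3^a$, use the deterministic winding there to turn $|F^a|$ into the connection probability times an explicit constant (the paper's Lemma~\ref{lem::one_arm_bd_aux1}, via~\cite{FPWFKIsing}), and conclude from the convergence of the rescaled observable. The only point to be careful about is that Smirnov's convergence is stated locally uniformly in the interior, so the convergence at the boundary point $u_3$ needs the flat-boundary regularity argument of~\cite{ChelkakIzyHolomorphic}, which is exactly what the paper supplies in Lemma~\ref{lem::one_arm_bd_aux2} and what your appeal to ``boundary refinements'' implicitly assumes.
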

Note that~\eqref{eqn::sharp_boundary} gives the sharpness of the boundary one-arm exponent for the FK-Ising model. 

The proof of Proposition~\ref{prop::one_arm_bd} relies on the following  observations: (1) the choice of Dobrushin boundary conditions implies that the edges of the medial lattice form a collection $\Gamma^a$ of non-crossing loops and an edge-self-avoiding interface $\gamma^a$ parameterized from $w_{1}^{a,\diamond}$ to $w_{2}^{a,\diamond}$;
(2) denoting by $u_3^{a,\diamond}\in \partial \Box_{2}^{a,\diamond}$ the medial vertex to the north of $u_3^a$ closest to $u_3^a$
and letting {$e_{3,-}^{a,\diamond}, e_{3,+}^{a,\diamond}$} be the oriented\footnote{Recall that medial edges 
are oriented in such a way that the four edges around a vertex of $\mathbb{Z}^2$ (respectively, $(\mathbb{Z}^2)^{\bullet}$) form a circuit that winds around the vertex clockwise (resp., counterclockwise).}
medial edges around $u_3^a$ with $u_3^{a,\diamond}$ as their end vertex and beginning vertex, respectively (see Figure~\ref{fig:FKObservable}), then 
\begin{equation} \label{eqn::equi_events}
	\{u_3^a \longleftrightarrow (u_1^a u_2^a)\}=\{\gamma^a \text{ passes through }e_{3,-}^{a,\diamond} \}=\{\gamma^a \text{ passes through }e_{3,+}^{a,\diamond}\};
\end{equation}
(3) the probabilities of the latter two events in~\eqref{eqn::equi_events} can be related to the value of Smirnov's observable on the medial vertex $u_3^{a,\diamond}$. 

We interpret each oriented medial edge $e^{\diamond}$ as a complex number and define
\begin{equation*}
	\nu\left(e^{\diamond}\right):=\left(\frac{e^{\diamond}}{|e^{\diamond}|}\right)^{-\frac{1}{2}}.
\end{equation*}
Note that $\nu(e^{\diamond})$ is defined up to a sign, which we will specify when necessary. We denote by $\mathbb{E}_{*}^a$ the expectation corresponding to $\mathbb{P}_*^a$. Now let us recall the definition of FK-Ising fermionic observable given in~\cite{SmirnovHolomorphicFermion}. Recall that in the Dobrushin domain $(\Box_2^a;u_1^a,u_2^a)$, the outer corner $w_2^{a,\diamond}\in (a\mathbb{Z}^2)^{\diamond}\setminus \Box_{2}^{a,\diamond}$ is a medial vertex adjacent to $u_2^{a,\diamond}$, and the outer corner edge $e_2^{a,\diamond}$ is the medial edge connecting $u_2^{a,\diamond}$ and $w_2^{a,\diamond}$.
\begin{itemize}
	\item First, define the \textit{edge observable} on edges and outer corner edges $e$ of $\Box_{2}^{a,\diamond}$ as
\begin{align*}
	F^{a}(e) :=  
	\nu(e_{2}^{a,\diamond}) 
	\; \E^{a}_{*} \Big[ \one \{e\in\gamma^{a}\} 
	\exp\Big(-\frac{\ii}{2} W_{\gamma^{a}} \big( e_{2}^{\delta,a} ,e \big) \Big) \Big] ,
\end{align*}
where $e_{2}^{a,\diamond}$ is the oriented outer corner edge connecting to $w_{2}^{a,\diamond}$ and oriented to have $w_{2}^{a,\diamond}$ as its end vertex, 
$W_{\gamma^a}\big(e_{2}^{a,\diamond}, e\big) {\in \R}$ is the winding number from $w_{2}^{a,\diamond}$ to $e$ 
along the reversal of $\gamma^{a}$. Note that $F^a$ is only defined up to a sign. 

\item Second, we define the \textit{vertex observable} on interior vertices $z^{\diamond}$ of $\Box_{2}^{a,\diamond}$ as
\begin{align*}
	F^{a}(z^{\diamond}) := \frac{1}{2}\sum_{e^{\diamond} \sim z^{\diamond}} F^{a}(e^{\diamond}) ,	
\end{align*}
where the sum is over the four medial edges $e^{\diamond} \sim z^{\diamond}$ having $z^{\diamond}$ as an endpoint. 

\item Third, we define the \textit{vertex observable} on vertices in $\partial\Box_2^{a,\diamond} \setminus \{u_1^{a,\diamond},u_2^{a,\diamond}\}$ as follows. 
For any $z^{a,\diamond} \in \partial\Box_2^{a,\diamond} \setminus \{u_1^{a,\diamond},u_2^{a,\diamond}\}$, let $e_-^{a,\diamond}, e_+^{a,\diamond} \in \partial\Box_2^{a,\diamond} \setminus \{u_1^{a,\diamond},u_2^{a,\diamond}\}$ be the oriented medial edges having $z^{a,\diamond}$ as their end vertex and beginning vertex, respectively. Set 
\begin{align*} 
	F^{a}(z^{\diamond}) =
	\begin{cases}
		\sqrt{2} \exp(-\ii\frac{\pi}{4})F^{a}(e_+^{a,\diamond}) + \sqrt{2} \exp(\ii\frac{\pi}{4})F^a(e_-^{a,\diamond}), & \text{if $z^{a,\diamond}$ $\in (u_1^{a,\diamond}u_2^{a,\diamond})$} , \\[.5em]
		\sqrt{2} \exp(-\ii\frac{\pi}{4})F^a(e_-^{a,\diamond}) + \sqrt{2} \exp(\ii\frac{\pi}{4})F^a(e_+^{a,\diamond}), & \text{if $z^{a,\diamond}$ $\in (u_2^{a,\diamond}u_1^{a,\diamond})$}.
	\end{cases}
\end{align*} 
\end{itemize}

\begin{lemma}\label{lem::one_arm_bd_aux1}
	With an appropriate choice of the sign of $\nu(e_2^{a,\diamond})$, we have 
	\begin{equation} \label{eqn::value_proba}
		F^a(u_3^{a,\diamond})= 2\sqrt{2} \cos\left(\frac{\pi}{8}\right)\times \mathbb{P}_*^a \left[\gamma^a \text{passes through } {e_{3,-}^{a,\diamond}} \right]=2\sqrt{2} \cos\left(\frac{\pi}{8}\right)\times\mathbb{P}_*^a\left[u_3^a\longleftrightarrow (u_1^a u_2^a)\right]. 
	\end{equation}
\end{lemma}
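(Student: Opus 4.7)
The plan is to unpack the definition of the boundary vertex observable at $u_3^{a,\diamond}$ and show that, after carefully tracking windings and phases, the three factors of the target identity (namely $2\sqrt{2}\cos(\pi/8)$, the connection probability, and the connectivity event) line up. I will split the argument into three steps: an event-equivalence step, a direct application of the boundary formula, and a winding/phase computation.

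\textbf{Step 1 (event equivalence).} First I would verify that
\[
\{\gamma^a \text{ passes through } e_{3,-}^{a,\diamond}\}=\{\gamma^a \text{ passes through } e_{3,+}^{a,\diamond}\}=\{u_3^a \longleftrightarrow (u_1^a u_2^a)\}.
\]
The first equality is a local topological observation: the two medial edges incident to $u_3^{a,\diamond}$ that go outside $\Box_2^{a,\diamond}$ are not part of the restricted medial graph, so $u_3^{a,\diamond}$ has degree two inside the domain, forcing any interface visiting $u_3^{a,\diamond}$ to use both $e_{3,-}^{a,\diamond}$ and $e_{3,+}^{a,\diamond}$. The second equality is the standard Dobrushin-interface property: $\gamma^a$ traces the outer boundary of the open primal cluster of the wired arc, and the only way for this boundary to approach the top side precisely at $u_3^a$ is through $\{e_{3,\pm}^{a,\diamond}\}$, which happens iff $u_3^a$ itself lies in that cluster.

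\textbf{Step 2 (boundary formula).} Since $u_3^a$ lies on the free arc, $u_3^{a,\diamond}$ belongs to the medial arc $(u_2^{a,\diamond}u_1^{a,\diamond})$, so the second branch of the boundary vertex-observable definition applies, yielding
\[
F^a(u_3^{a,\diamond}) \;=\; \sqrt{2}\,e^{-\ii\pi/4} F^a(e_{3,-}^{a,\diamond}) + \sqrt{2}\,e^{\ii\pi/4} F^a(e_{3,+}^{a,\diamond}).
\]
For each boundary edge $e \in \{e_{3,-}^{a,\diamond}, e_{3,+}^{a,\diamond}\}$, the winding $W_{\gamma^a}(e_2^{a,\diamond}, e)$ from $w_2^{a,\diamond}$ back to $e$ along the reversal of $\gamma^a$ is independent of the particular realization of $\gamma^a$ on the event $\{e\in\gamma^a\}$, modulo $4\pi$: any admissible interface is forced to wind in the same way around the boundary of $\Box_2^{a,\diamond}$ up to homotopy. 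Denoting the (deterministic) values by $W_{\pm}$ and writing $P := \mathbb{P}_*^a[u_3^a \longleftrightarrow (u_1^a u_2^a)]$, Step~1 gives
\[
F^a(e_{3,\pm}^{a,\diamond}) \;=\; \nu(e_2^{a,\diamond})\, e^{-\ii W_{\pm}/2}\, P.
\]

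\textbf{Step 3 (phase computation).} The oriented edges $e_{3,-}^{a,\diamond}$ and $e_{3,+}^{a,\diamond}$ have directions $e^{\ii\pi/4}$ and $e^{-\ii\pi/4}$, respectively, and the backward interface must turn by a fixed angle of $\pi/2$ at $u_3^{a,\diamond}$ (since it enters along one of the two admissible boundary edges and exits along the other), which gives $W_+ - W_- = \pm \pi/2$. Substituting into the identity of Step~2,
\[
F^a(u_3^{a,\diamond}) \;=\; \sqrt{2}\,\nu(e_2^{a,\diamond})\, e^{-\ii W_-/2}\, P \,\bigl(e^{-\ii\pi/4} + 1\bigr),
\]
(with the alternative sign of $W_+-W_-$ producing the factor $1+e^{\ii\pi/4}$ instead, of the same modulus). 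Since $|1+e^{-\ii\pi/4}| = \sqrt{2+\sqrt{2}} = 2\cos(\pi/8)$, one has $1+e^{-\ii\pi/4} = 2\cos(\pi/8)\,e^{-\ii\pi/8}$, so by choosing the sign of $\nu(e_2^{a,\diamond})$ to absorb the overall phase $e^{-\ii W_-/2 - \ii\pi/8}$, the right-hand side becomes $2\sqrt{2}\cos(\pi/8)\cdot P$, which is the claimed formula.

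The main obstacle is the bookkeeping of winding and orientation conventions in Step~3: one must identify unambiguously the directions of the two boundary edges at $u_3^{a,\diamond}$, compute the precise turn of the reversed interface, and fix the sign of $\nu(e_2^{a,\diamond})$ consistently. These are routine but delicate within Smirnov's framework, and all the other ingredients reduce to a local topological observation together with the definition of the observable.
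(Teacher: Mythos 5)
Your Steps 1--2 are sound and correspond to what the paper uses: your event equivalence is exactly \eqref{eqn::equi_events}, and the deterministic-winding claim for boundary medial edges is the same observation the paper makes; where the paper then simply invokes \cite[Eq.~(3.25)]{FPWFKIsing}, you attempt to rederive the boundary identity from the definitions. That is legitimate in principle, but your Step 3 has a genuine gap, in fact an error. The two possible signs of $W_+-W_-$ do \emph{not} produce factors of the same modulus. Writing the bracket as $e^{-\ii\pi/4}e^{-\ii W_-/2}+e^{\ii\pi/4}e^{-\ii W_+/2}$, the choice $W_+=W_-+\pi/2$ gives $e^{-\ii W_-/2}\left(1+e^{-\ii\pi/4}\right)$, of modulus $2\cos(\pi/8)$, while the choice $W_+=W_--\pi/2$ gives $e^{-\ii W_-/2}\left(e^{-\ii\pi/4}+e^{\ii\pi/2}\right)$, of modulus $2\cos(3\pi/8)=2\sin(\pi/8)$; there is no factorization producing $1+e^{\ii\pi/4}$ in the second case. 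So, as written, your argument cannot distinguish the claimed constant $2\sqrt{2}\cos(\pi/8)$ from $2\sqrt{2}\sin(\pi/8)$: you must actually determine in which order and with which orientations the reversed interface traverses $e_{3,-}^{a,\diamond}$ and $e_{3,+}^{a,\diamond}$, and match this with the winding convention and with which branch of the boundary vertex-observable definition applies on the free arc. This sign bookkeeping is precisely the content of the identity the paper cites.

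A second, related gap: your final move, ``choose the sign of $\nu(e_2^{a,\diamond})$ to absorb the overall phase $e^{-\ii W_-/2-\ii\pi/8}$,'' is not available, because the sign ambiguity of $\nu(e_2^{a,\diamond})$ (and of $F^a$) only permits absorbing a factor $\pm 1$. The lemma asserts an equality of numbers, not of moduli, so you must verify that the unimodular quantity $\nu(e_2^{a,\diamond})\,e^{-\ii W_-/2-\ii\pi/8}$ is real, i.e.\ $\pm1$; this requires computing the actual deterministic winding of the reversed interface from $e_2^{a,\diamond}$ (at the southeast corner) to the medial edges at $u_3^{a,\diamond}$, together with the explicit value of $\nu(e_2^{a,\diamond})=(e_2^{a,\diamond}/|e_2^{a,\diamond}|)^{-1/2}$. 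Until both the sign of $W_+-W_-$ and this global phase are pinned down, your computation establishes at most that $|F^a(u_3^{a,\diamond})|$ equals either $2\sqrt{2}\cos(\pi/8)$ or $2\sqrt{2}\sin(\pi/8)$ times $\mathbb{P}_*^a\left[u_3^a\longleftrightarrow (u_1^a u_2^a)\right]$, which falls short of \eqref{eqn::value_proba}.
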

\begin{proof}
	The first equal sign in~\eqref{eqn::value_proba} follows from~\cite[Eq.~(3.25)]{FPWFKIsing} and the observation that the winding number $W_{\gamma^a}(e_2^{a,\diamond}, e_{3,-}^{a,\diamond})$ is the same for all FK-Ising configurations. The second equal sign follows from~\eqref{eqn::equi_events}.
\end{proof}

It is a celebrated result in~\cite{SmirnovHolomorphicFermion} that, {as $a \to 0$, the function} $2^{-1/4}a^{-\frac{1}{2}}F^a(\cdot)$ converges locally uniformly towards an explicit holomorphic function on $[-2,2]^2$.
Since the boundary of our discrete domain $\Box_2^{a}$ is flat near $u_3^a$, we also have the convergence of $2^{-1/4}a^{-\frac{1}{2}}F^a(u_3^{a,\diamond})$.

\begin{lemma} \label{lem::one_arm_bd_aux2}
	We have the convergence
	\begin{equation*}
		\lim_{a\to 0} 2^{-1/4}a^{-\frac{1}{2}}|F^a(u_3^{a,\diamond})|=|\phi(\ii;[-2,2]^2;-1-\ii,1-\ii)|,
	\end{equation*}
where $\phi(\cdot;[-2,2]^2;-1-\ii,-1+\ii)$ is the unique (up to a sign) holomorphic function defined in~\cite[Proposition~3.6 and Remark~3.9]{FPWFKIsing}.
\end{lemma}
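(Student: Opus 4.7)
The plan is to reduce the lemma to the celebrated convergence of Smirnov's FK-Ising fermionic observable proved in \cite{SmirnovHolomorphicFermion}, and then to upgrade that convergence, which is stated in the bulk, to the boundary vertex $u_3^{a,\diamond}$ by using the fact that $\partial \Box_2^{a}$ is a flat straight segment in a neighborhood of $u_3^a$.

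First, I would recall the main theorem of \cite{SmirnovHolomorphicFermion}, as restated in Proposition~3.6 and Remark~3.9 of \cite{FPWFKIsing}: on the sequence of discrete Dobrushin domains $(\Box_2^a;u_1^a,u_2^a)$ approximating $([-2,2]^2;-1-\ii,1-\ii)$ (after the rescaling used in \cite{FPWFKIsing}), the normalized vertex observable $2^{-1/4}a^{-1/2}F^a(z^{a,\diamond})$ converges locally uniformly on compact subsets of the interior of $[-2,2]^2$ to the holomorphic function $\phi(\cdot;[-2,2]^2;-1-\ii,1-\ii)$. This function is characterized, uniquely up to a sign, by its explicit boundary conditions and its prescribed singular behavior at the two marked points.

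Next, the point $u_3^{a,\diamond}$ lies on $\partial \Box_2^{a,\diamond}$, so the bulk convergence does not directly apply. However, a neighborhood of $u_3^a$ inside $\partial \Box_2^a$ is a flat horizontal segment. Both the rescaled discrete observable $2^{-1/4}a^{-1/2}F^a$ and the continuum limit $\phi$ extend continuously to this flat boundary arc. In the FK-Ising framework, flat boundary segments are precisely the situation in which the $s$-holomorphicity of $F^a$ permits a Schwarz-type reflection across the boundary, effectively embedding the observable into a larger domain where the bulk convergence of \cite{SmirnovHolomorphicFermion} applies. Combining this reflection with the equicontinuity estimates for the normalized observable near the flat boundary (which follow from discrete holomorphicity together with a discrete maximum principle), one can transfer the convergence from the interior to $u_3^{a,\diamond}$, obtaining $2^{-1/4}a^{-1/2}F^a(u_3^{a,\diamond})\to \phi(\ii;[-2,2]^2;-1-\ii,1-\ii)$. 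Passing to absolute values, which eliminates the sign ambiguity intrinsic to the definition of $F^a$, then gives the claim.

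I expect the main technical obstacle to be the boundary upgrade in the previous paragraph, namely establishing uniform (in $a$) control of $2^{-1/4}a^{-1/2}F^a$ in a neighborhood of the flat boundary piece containing $u_3^a$ and verifying that the reflected discrete observable fits the hypotheses of the scaling limit theorem of \cite{SmirnovHolomorphicFermion}. All of the required ingredients, however, are already present in the discrete complex analysis literature on the critical FK-Ising model (the $s$-holomorphicity of $F^a$, flat-boundary reflection, and standard a priori regularity estimates for $s$-holomorphic functions), so no genuinely new input should be needed.
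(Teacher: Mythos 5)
Your proposal follows essentially the same route as the paper: the paper also takes Smirnov's bulk convergence as given and upgrades it to the flat boundary point $u_3^{a}$, simply by noting that the boundary near $u_3^a$ satisfies the regularity assumption of~\cite[Definition~3.14]{ChelkakIzyHolomorphic} and then invoking the boundary-convergence argument of~\cite[Proof of Lemma~4.8]{ChelkakIzyHolomorphic}, which supplies exactly the $s$-holomorphicity/a priori regularity input you sketch. The only difference is that you describe the technical mechanism (reflection plus equicontinuity) explicitly, whereas the paper outsources it by citation.
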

\begin{proof}
The boundary of $\Box_2^{a}$ near $u_3^a$ satisfies the regularity assumption in~\cite[Definition~3.14]{ChelkakIzyHolomorphic}. Thus, we can repeat the argument in~\cite[Proof of Lemma~4.8]{ChelkakIzyHolomorphic} to obtain the desired convergence. 
\end{proof}

\begin{proof}[Proof of Proposition~\ref{prop::one_arm_bd}]
	The desired conclusion follows immediately from Lemmas~\ref{lem::one_arm_bd_aux1} and \ref{lem::one_arm_bd_aux2}.
\end{proof}

\section{Connection probabilities of interior vertices} \label{sec::con::int}
\subsection{One-arm event coupling and the spatial mixing property}
For $0< r<R$, we denote by $\mathcal{A}_{r,R}(z)$ the event $\{\partial B_r(z)\longleftrightarrow \partial B_{R}(z)\}$ and by $\mathcal{O}_{r,R}(z)$ the event that there exists an open circuit surrounding $z$ inside $B_{R}(z)\setminus B_r(z)$.  If $A\subseteq \mathbb{C}$, we define 
\begin{equation} \label{eqn::def_ecu}
	d(z,A):=\inf_{w\in A} |z-w|. 
\end{equation}

The following lemma is an analog of~\cite[Lemma~2.1]{Cam24} for the FK-Ising model.

\begin{lemma} \label{lem::separation}
	Let $\Omega^a\subseteq a\mathbb{Z}^2$ and $z\in \Omega^a$. Let $\epsilon>0$ satisfy $\epsilon<\frac{d(\partial\Omega^a,z)}{10}$. Consider the critical FK-Ising measure $\mathbb{P}^a_{\pi}$ on $\Omega^a$ with arbitrary boundary condition $\pi$. Then for any $\epsilon>\delta>\eta>a$, there exists a coupling, $\mathbb{P}^{a}_{\eta,\delta}$, between $\tilde{\Lambda}^a\sim\mathbb{P}^a_{\pi}\left[\cdot | z\longleftrightarrow \partial B_{\epsilon}(z)\right]$ and $\widehat{\Lambda}^a\sim\mathbb{P}^a_{\pi}\left[\cdot| \mathcal{A}_{\eta,\epsilon}(z)\right]$, and an event $\mathcal{S}$, such that 
	\begin{equation*}
	\widehat{\mathcal{O}}_{\eta,\delta}(z)\subseteq \mathcal{S},
	\end{equation*}
where $	\widehat{\mathcal{O}}_{\eta,\delta}(z)$ denotes the event that there exists an open circuit surrounding $z$ inside $B_\delta(z)\setminus B_{\eta}(z)$ in $\widehat{\Lambda}^a$, and such that if $\mathcal{S}$ happens, then status of edges outside $B_{\delta}(z)$ is the same under both configurations $\tilde{\Lambda}^a$ and $\widehat{\Lambda}^a$. 
In particular, there exist universal constants $c_1,c_2\in (0,\infty)$ such that 
\begin{equation*}
	\mathbb{P}^{a}_{\eta,\delta}[\mathcal{S}]\geq 1-c_1\left(\frac{\eta}{\delta}\right)^{c_2}.
\end{equation*}
\end{lemma}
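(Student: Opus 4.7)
The plan is to construct the coupling by pivoting on the outermost open circuit around $z$ in the annulus $B_{\delta}(z)\setminus B_{\eta}(z)$ and exploiting the domain Markov property of the FK-Ising model; $\mathcal{S}$ will be (essentially) the event that $\tilde{\Lambda}^a$ and $\widehat{\Lambda}^a$ share a common outermost circuit together with a common exterior configuration.

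First, I would bound $\mathbb{P}^a_{\pi}[\mathcal{O}_{\eta,\delta}(z)\mid \mathcal{A}_{\eta,\epsilon}(z)]$ from below. Since $\mathcal{O}_{\eta,\delta}(z)$ and both conditioning events are increasing, FKG reduces matters to bounding the unconditional circuit probability $\mathbb{P}^a_{\pi}[\mathcal{O}_{\eta,\delta}(z)]$. Partitioning $B_{\delta}(z)\setminus B_{\eta}(z)$ into $\asymp \log_2(\delta/\eta)$ disjoint dyadic subannuli and applying RSW for critical FK-Ising \cite{DuminilCopinHonglerNolinRSWFKIsing} together with FKG, each subannulus contains an open circuit surrounding $z$ with probability bounded below by a universal $c_0\in(0,1)$; a geometric-decay argument then yields the target bound $1-c_1(\eta/\delta)^{c_2}$.

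Second, on $\mathcal{O}_{\eta,\delta}(z)$, let $\mathcal{C}$ be the outermost open circuit around $z$ in the annulus and let $D\subseteq B_{\delta}(z)$ denote its enclosed region. The pair $(\mathcal{C},\omega_{\mathrm{out}})$, where $\omega_{\mathrm{out}}$ is the restriction of the configuration to edges outside $D$, can be explored from outside, so by the domain Markov property the edges inside $D^{\circ}$ conditionally on $(\mathcal{C},\omega_{\mathrm{out}})$ form a critical FK-Ising configuration on $D$ with wired BC on $\mathcal{C}$, independently of the exterior. A direct calculation then shows that, under the two conditional measures $\mathbb{P}^a_{\pi}[\cdot\mid \mathcal{A}_{\eta,\epsilon}(z)]$ and $\mathbb{P}^a_{\pi}[\cdot\mid z\longleftrightarrow\partial B_{\epsilon}(z)]$, the joint law of $(\mathcal{C},\omega_{\mathrm{out}})$ factorizes as a common ``outer weight'' multiplied by the ``inner weights'' $\mathbb{P}_{\mathcal{C}}^{\mathrm{wired}}[\partial B_{\eta}(z)\longleftrightarrow \mathcal{C}]$ and $\mathbb{P}_{\mathcal{C}}^{\mathrm{wired}}[z\longleftrightarrow \mathcal{C}]$, respectively.

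The main technical step is to show that the ratio of the two inner weights is uniformly close to a constant in $\mathcal{C}$. By quasi-multiplicativity of the FK-Ising one-arm event, which follows from Lemma~\ref{lem::spatial} combined with the standard RSW-based separation-of-arms argument, one has, uniformly over admissible $C$,
\[
\frac{\mathbb{P}_{C}^{\mathrm{wired}}[z\longleftrightarrow C]}{\mathbb{P}_{C}^{\mathrm{wired}}[\partial B_{\eta}(z)\longleftrightarrow C]} = \mathbb{P}^{a}[z\longleftrightarrow \partial B_{\eta}(z)]\bigl(1+O((\eta/\delta)^{c_2})\bigr),
\]
and an analogous asymptotic holds for the ratio of normalizing constants $\mathbb{P}^a_{\pi}[z\longleftrightarrow \partial B_{\epsilon}(z)]/\mathbb{P}^a_{\pi}[\mathcal{A}_{\eta,\epsilon}(z)]$. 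Consequently the two joint laws of $(\mathcal{C},\omega_{\mathrm{out}})$ are within total variation $O((\eta/\delta)^{c_2})$. One then builds the coupling by sampling $\widehat{\Lambda}^{a}$ first, keeping the exterior of $D$ unchanged on $\widehat{\mathcal{O}}_{\eta,\delta}(z)$, resampling the interior of $D$ from $\mathbb{P}_{\widehat{\mathcal{C}}}^{\mathrm{wired}}[\cdot\mid z\longleftrightarrow \widehat{\mathcal{C}}]$ to form $\tilde{\Lambda}^{a}$, and absorbing the residual $O((\eta/\delta)^{c_2})$ marginal discrepancy of $\tilde{\Lambda}^{a}$ on the complementary (small-probability) event $\widehat{\mathcal{O}}_{\eta,\delta}(z)^c$. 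Taking $\mathcal{S}$ to be this construction's success event then gives $\widehat{\mathcal{O}}_{\eta,\delta}(z)\subseteq \mathcal{S}$, agreement of edges outside $B_{\delta}(z)$ on $\mathcal{S}$, and $\mathbb{P}^{a}_{\eta,\delta}[\mathcal{S}]\geq 1-O((\eta/\delta)^{c_2})$. The main obstacle is the quasi-multiplicativity estimate displayed above, which in the FK-Ising setting lacks the local independence used in Bernoulli percolation and must instead be derived from the spatial mixing of \cite{DuminilCopinHonglerNolinRSWFKIsing}.
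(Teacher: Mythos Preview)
Your overall architecture—pivot on the outermost open circuit $\mathcal{C}$ in $B_\delta\setminus B_\eta$, use domain Markov, compare the two induced laws on $(\mathcal{C},\omega_{\mathrm{out}})$—is the natural one, and your identification of the Radon--Nikodym factor
\[
\frac{\mathbb{P}_{C}^{\mathrm{wired}}[z\longleftrightarrow C]}{\mathbb{P}_{C}^{\mathrm{wired}}[\partial B_{\eta}(z)\longleftrightarrow C]}
\]
is correct. However, the displayed claim that this ratio equals $\mathbb{P}^{a}[z\longleftrightarrow \partial B_{\eta}(z)]\bigl(1+O((\eta/\delta)^{c_2})\bigr)$ \emph{uniformly over admissible $C$} is not justified by the tools you cite. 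Quasi-multiplicativity only yields that the ratio is comparable to $\mathbb{P}^{a}[z\longleftrightarrow \partial B_{\eta}(z)]$ up to universal multiplicative constants, not with multiplicative error $1+o(1)$. And Lemma~\ref{lem::spatial} compares probabilities of events supported in a small box under different far-away boundary conditions; here both events $\{z\leftrightarrow C\}$ and $\{\partial B_\eta(z)\leftrightarrow C\}$ use edges all the way out to $C$, so there is no separation of scales to exploit. In particular, when the outermost circuit $C$ happens to sit close to $\partial B_\eta(z)$ (which the inclusion $\widehat{\mathcal{O}}_{\eta,\delta}\subseteq\mathcal{S}$ forces you to handle), the ratio can deviate from any fixed constant by a bounded factor.

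This breaks your absorption step. To ``absorb the residual marginal discrepancy of $\tilde\Lambda^a$ on $\widehat{\mathcal{O}}_{\eta,\delta}^c$'' while keeping $\widehat{\mathcal{O}}_{\eta,\delta}\subseteq\mathcal{S}$, you need, for every exterior configuration $x$, the pointwise one-sided inequality
\[
\mathbb{P}^a_\pi\bigl[\mathcal{O}_{\eta,\delta}(z),\ \text{ext}=x \,\big|\, \mathcal{A}_{\eta,\epsilon}(z)\bigr]\ \le\ \mathbb{P}^a_\pi\bigl[\text{ext}=x \,\big|\, z\longleftrightarrow\partial B_\epsilon(z)\bigr],
\]
not merely a small total-variation distance between the two exterior laws. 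Your argument establishes neither. By contrast, the paper's proof simply defers to \cite[Lemma~2.1]{Cam24} and notes that only the FKG inequality and RSW estimates are used there, both of which hold for critical FK-Ising; it does not invoke the spatial mixing of Lemma~\ref{lem::spatial} at this stage. So your sketch is both more intricate than what the paper intends and, at its pivotal estimate, incomplete.
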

\begin{proof}
    The proof is essentially the same as that of~\cite[Lemma~2.1]{Cam24}. 
 The same strategy works here because the proof of~\cite[Lemma~2.1]{Cam24} is based on the FKG inequality and RSW estimates.
 Like percolation, the FK-Ising model also satisfies the FKG inequality (see, e.g.,~\cite{BurtonMeaneDensityUniquenessPercolation}) and RSW estimates (as shown in~\cite{DuminilCopinHonglerNolinRSWFKIsing}).
\end{proof}

We denote by $\mu_{G}^{\pi}$ the critical FK-Ising measure on $G\subseteq \mathbb{Z}^2$ with boundary condition $\pi$. For $N\geq 1$, write $\Box_N=[-N,N]^2\cap \mathbb{Z}^2$. 
  We will also use the ``spatial mixing property'' of the critical FK-Ising model:
\begin{lemma} \label{lem::spatial}
	There exist two universal constants $c_3, c_4\in (0,\infty)$ such that, for any $10N<M$, any boundary conditions $\tau,\pi$ on $\partial\Box_{M}$ and any event $\mathcal{A}$ that depends only on state of edges inside $\Box_{N}$, we have 
	\begin{equation*}
		|\mu_{\Box_{M}}^{\pi}(\mathcal{A})-\mu_{\Box_{M}}^{\tau}(\mathcal{A})|\leq c_3\left(\frac{N}{M}\right)^{c_4}\times \mu_{\Box_{M}}^{\pi}(\mathcal{A}).
	\end{equation*}
\end{lemma}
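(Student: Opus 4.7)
The plan is to follow the standard ``decoupling-via-circuit'' strategy for the random-cluster model, relying on the FKG inequality, the domain Markov property of the FK-Ising model, and the strong (boundary-condition-uniform) RSW estimates established in~\cite{DuminilCopinHonglerNolinRSWFKIsing}. Set $A:=\Box_{M/2}\setminus\Box_{2N}$, which is a well-separated annulus since $M>10N$; let $\mathcal{C}$ be the event that $A$ contains an open primal circuit surrounding $\Box_N$, and on $\mathcal{C}$ let $\Gamma$ denote the innermost such circuit. Iterating the strong RSW estimate over the $\Theta(\log_2(M/N))$ disjoint dyadic sub-annuli of $A$, together with the FKG inequality, yields constants $c,\alpha>0$ such that $\mu^{\pi}_{\Box_M}(\mathcal{C}^c)\leq c(N/M)^{\alpha}$ uniformly in the boundary condition $\pi$ on $\partial\Box_M$; the essential point is the uniformity of the RSW lower bounds in boundary conditions, which is the content of the strong RSW theorem of~\cite{DuminilCopinHonglerNolinRSWFKIsing} and what distinguishes the FK-Ising case from its percolation analogue (where independence makes the point trivial).

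By the domain Markov property, conditionally on $\{\Gamma=\gamma\}$ the configuration inside $\gamma$ is distributed as $\mu^{1}_{\mathrm{Int}(\gamma)}$ (wired on $\gamma$), independently of the boundary condition on $\partial\Box_M$. Setting $f(\gamma):=\mu^{1}_{\mathrm{Int}(\gamma)}(\mathcal{A})$ (well-defined since $\mathcal{A}$ is supported on edges in $\Box_N\subset\mathrm{Int}(\gamma)$), this gives, for any $\pi$ and $\tau$, the identity
\begin{align*}
\mu^{\pi}_{\Box_M}(\mathcal{A})-\mu^{\tau}_{\Box_M}(\mathcal{A})
=\sum_{\gamma}\bigl[\mu^{\pi}_{\Box_M}(\Gamma=\gamma)-\mu^{\tau}_{\Box_M}(\Gamma=\gamma)\bigr]f(\gamma)
+\bigl[\mu^{\pi}_{\Box_M}(\mathcal{A},\mathcal{C}^c)-\mu^{\tau}_{\Box_M}(\mathcal{A},\mathcal{C}^c)\bigr].
\end{align*}
The second bracket is controlled multiplicatively by applying the iterated RSW argument of the previous step \emph{conditionally} on the configuration in $\Box_N$: such a conditioning only alters the effective boundary condition along $\partial\Box_N$ seen from $A$, and the strong RSW of~\cite{DuminilCopinHonglerNolinRSWFKIsing} is uniform in such boundary conditions, giving $\mu^{\pi}(\mathcal{C}^c\mid\mathcal{A})\leq c(N/M)^{\alpha}$ and hence $\mu^{\pi}(\mathcal{A},\mathcal{C}^c)\leq c(N/M)^{\alpha}\mu^{\pi}(\mathcal{A})$. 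To handle the first bracket, I would introduce a second, outer separating circuit event $\mathcal{C}'$ in an annulus near $\partial\Box_M$ (say $\Box_M\setminus\Box_{3M/4}$); conditionally on the outermost such circuit $\Gamma'$, the joint law of $(\Gamma,\mathrm{config\,in\,}\mathrm{Int}(\Gamma'))$ is $\mu^1_{\mathrm{Int}(\Gamma')}$ and in particular independent of $\pi$, so the contribution of $\mathcal{C}'$ to the first bracket vanishes, while the residual contribution from $(\mathcal{C}')^c$ is again controlled by a conditional RSW bound of size $(N/M)^{\alpha}$.

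The main obstacle is precisely the upgrade from the additive estimate $|\mu^{\pi}(\mathcal{A})-\mu^{\tau}(\mathcal{A})|\lesssim(N/M)^{\alpha}$---which is essentially immediate from the decomposition above---to the claimed multiplicative one, which is significantly stronger when $\mathcal{A}$ has small probability. This upgrade requires circuit-existence estimates that survive conditioning on an arbitrary event $\mathcal{A}$ supported on $\Box_N$, a property that for Bernoulli percolation is trivial by independence but for FK-Ising demands the uniform-in-boundary-conditions strong RSW of~\cite{DuminilCopinHonglerNolinRSWFKIsing}. This is the only non-trivial ingredient and the reason that the statement is quoted from~\cite{DuminilCopinHonglerNolinRSWFKIsing} rather than reproved from scratch here.
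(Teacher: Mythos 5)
The paper does not reprove this lemma at all: its ``proof'' is the single citation to \cite[Proposition~5.11]{DuminilCopinHonglerNolinRSWFKIsing}, so your final move of deferring the statement to that reference matches the paper. But the sketch you build around the citation fails exactly at the step you single out as the main obstacle, the upgrade from an additive to a multiplicative bound. Two of your ingredients are sound: the uniform-in-boundary-conditions circuit estimate over the $\log_2(M/N)$ dyadic sub-annuli of $\Box_{M/2}\setminus\Box_{2N}$, and the bound $\mu^{\pi}_{\Box_M}(\mathcal{A},\mathcal{C}^c)\le c(N/M)^{\alpha}\,\mu^{\pi}_{\Box_M}(\mathcal{A})$, obtained by conditioning on the full configuration in $\Box_N$ (which determines $\mathcal{A}$) and using that RSW is uniform in the induced boundary conditions. (A minor repair: with the \emph{innermost} circuit $\Gamma$, the conditional law inside $\gamma$ is $\mu^{1}_{\mathrm{Int}(\gamma)}$ further conditioned on the absence of an open circuit in $A\cap\mathrm{Int}(\gamma)$, not $\mu^{1}_{\mathrm{Int}(\gamma)}$ itself; this is harmless since it is still independent of $\pi$, so your first identity survives with $f(\gamma):=\mu^{\pi}_{\Box_M}(\mathcal{A}\mid\Gamma=\gamma)$.)

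The genuine gap is the claim that, after conditioning on the outermost circuit $\Gamma'$ in $\Box_M\setminus\Box_{3M/4}$, ``the contribution of $\mathcal{C}'$ to the first bracket vanishes.'' Conditioning on $\Gamma'=\gamma'$ does make the law of everything inside $\gamma'$ independent of $\pi$, but the law of $\Gamma'$ itself still depends on the boundary condition, so the contribution of $\mathcal{C}'$ equals $\sum_{\gamma'}\bigl[\mu^{\pi}(\Gamma'=\gamma')-\mu^{\tau}(\Gamma'=\gamma')\bigr]h(\gamma')$ with $h(\gamma')=\mu^{1}_{\mathrm{Int}(\gamma')}(\mathcal{A}\cap\mathcal{C})$. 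Since $\gamma'$ lies at distance of order $M$ from $\partial\Box_M$, the two laws of $\Gamma'$ differ by an amount of order one, and $h(\gamma')$ is itself of order $\mu(\mathcal{A})$, so this term is a priori of size $\mathrm{const}\cdot\mu^{\pi}(\mathcal{A})$ rather than $(N/M)^{\alpha}\mu^{\pi}(\mathcal{A})$: you have only pushed the problem from $\Gamma$ to $\Gamma'$, and showing $h(\gamma')$ is constant in $\gamma'$ up to a factor $1+O((N/M)^{\alpha})$ is precisely the ratio-mixing statement being proved, so the argument as written is circular unless reorganized as a genuine multi-scale induction. Relatedly, the residual $(\mathcal{C}')^c$ term cannot be of size $(N/M)^{\alpha}$: the annulus $\Box_M\setminus\Box_{3M/4}$ has fixed modulus, so uniform RSW only yields a constant upper bound on the failure probability there. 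Hence your sketch establishes the additive estimate but not the multiplicative one, and the lemma genuinely rests on \cite[Proposition~5.11]{DuminilCopinHonglerNolinRSWFKIsing}, exactly as the paper states.
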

\begin{proof}
See~\cite[Proposition~5.11]{DuminilCopinHonglerNolinRSWFKIsing}.
\end{proof}
\subsection{Proof of Theorem~\ref{thm::cvg_proba}}
In the rest of the paper, let $\{\delta_m\}_{m=1}^{\infty}$ be a decreasing sequence such that $\lim_{m\to \infty}\delta_m=0$.
\subsubsection{Reduction to CLE conditional probabilities}
 We use the same strategy as in~\cite{Cam24} to prove the following result:
\begin{lemma} \label{lem::cvg_proba_aux1}
	Let $\parti$ be a partition of $\{1,2,\ldots,n\}$ that contains no singletons. Then, for any $\epsilon>0$ with 
	\begin{equation*}
		\epsilon<\frac{\min \{\min_{j\neq k}|z_j-z_k|,\min_{1\leq j\leq n}d(\Omega ,z_j)\}}{100},
	\end{equation*}
we have the convergence
\begin{equation} \label{eqn::cvg_proba_aux1}
	\lim_{a \to 0}\frac{\mathbb{P}^a\left[G(\parti;z_1^a,\ldots,z_{n}^a)\right]}{\mathbb{P}^a\left[z_i^a\longleftrightarrow \partial B_{\epsilon}(z_j^a), \enspace 1\leq j\leq n\right]}=\mathbb{P}\left[G(\parti;z_1,\ldots,z_n) | z_j\longleftrightarrow \partial B_{\epsilon}(z_j),\enspace 1\leq j\leq n\right],
\end{equation}
where  the right hand side of~\eqref{eqn::cvg_proba_aux1}, which belongs to $(0,\infty)$, can be defined in terms of conditional crossing probabilities as in~\eqref{eqn::def_lemaux1_aux1} and~\eqref{eqn::def_lemaux1_aux2} below for $\parti_n=(\{1,2,\ldots,n\})$, and in~\eqref{eqn::P_def_4} below for $\parti=(\{1,2\},\{3,4\})$. For general $\parti$, the quantity 
\[\mathbb{P}\left[G(\parti;z_1,\ldots,z_n) | z_j\longleftrightarrow \partial B_{\epsilon}(z_j),\enspace 1\leq j
\leq n\right]\]
 can be defined analogously.
\end{lemma}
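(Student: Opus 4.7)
The plan is to adapt the strategy of~\cite{Cam24}, replacing the use of edge-independence in percolation by two FK-Ising inputs: the one-arm coupling of Lemma~\ref{lem::separation} and the spatial mixing property of Lemma~\ref{lem::spatial}. The key idea is to decouple the behaviour near each $z_j^a$ from the macroscopic configuration and then to rewrite both the numerator and the denominator as probabilities of loop events that converge under Theorem~\ref{thm::CLE}.

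\textbf{Step 1: Coupling and localisation.} Pick two scales $\delta_m\to 0$ and $\eta_m>0$ with $\eta_m/\delta_m\to 0$ and $a\ll\eta_m<\delta_m<\epsilon/10$. For each $j$, apply Lemma~\ref{lem::separation} around $z_j^a$ to couple $\mathbb{P}^a[\,\cdot\,|\,z_j^a\leftrightarrow\partial B_\epsilon(z_j^a)]$ with $\mathbb{P}^a[\,\cdot\,|\,\mathcal{A}_{\eta_m,\epsilon}(z_j^a)]$ so that, on an event of probability at least $1-c_1(\eta_m/\delta_m)^{c_2}$, the two configurations agree outside $B_{\delta_m}(z_j^a)$ and there exists an open circuit surrounding $z_j^a$ in $B_{\delta_m}(z_j^a)\setminus B_{\eta_m}(z_j^a)$. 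Because the balls $B_\epsilon(z_j)$ are pairwise disjoint by hypothesis on $\epsilon$, these couplings can be run simultaneously; Lemma~\ref{lem::spatial} then implies that the joint law of the configurations around the different $z_j^a$'s, conditioned on the one-arm events, factorises up to a multiplicative error $O\bigl((\delta_m/\epsilon)^{c_4}\bigr)$. Consequently, up to a multiplicative error that vanishes as $a\to 0$ and then $m\to\infty$, the ratio in~\eqref{eqn::cvg_proba_aux1} equals
\[
\mathbb{P}^a\bigl[\,G(\parti;z_1^a,\ldots,z_n^a)\,\big|\,\mathcal{A}_{\eta_m,\epsilon}(z_j^a),\;1\le j\le n\,\bigr].
\]

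\textbf{Step 2: Reformulation via loops and passage to the limit.} On the good coupling event, the connectivity pattern of $z_1^a,\ldots,z_n^a$ is read off from the configuration outside $\cup_j B_{\delta_m}(z_j)$: two points lie in the same cluster iff the macroscopic clusters carrying the outermost circuits around them at scale $\delta_m$ merge. Both the conditioning event (existence of an FK-Ising loop around $z_j^a$ at scale between $\eta_m$ and $\epsilon$) and the partition event $G(\parti)$ are therefore determined by the loop ensemble $\Gamma^a$ at scales $\ge\delta_m$, and are continuity events for $\mathrm{CLE}_{16/3}$ (loops are simple, do not touch each other with probability one, and the relevant annulus-crossing events have positive $\mathrm{CLE}$ probability). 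Applying Theorem~\ref{thm::CLE} with the $\Dist$-topology, the discrete ratio converges as $a\to 0$ to the corresponding $\mathrm{CLE}_{16/3}$ conditional probability. A separation argument performed directly in the continuum (using the conformal invariance of $\mathrm{CLE}$ and the one-arm exponent of~\cite{SchrammSheffieldWilsonConformalRadii}) shows that this $\mathrm{CLE}$ probability converges as $\delta_m,\eta_m/\delta_m\to 0$ to a nontrivial limit, which is taken as the \emph{definition} of the right-hand side of~\eqref{eqn::cvg_proba_aux1}. Strict positivity of the limit follows from the FKG inequality applied to an open circuit surrounding all $z_j$'s at a common scale $\asymp\epsilon$, while finiteness is immediate because the numerator is no larger than the denominator.

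\textbf{Main obstacle.} Two difficulties arise, both absent in the percolation case of~\cite{Cam24}. First, the lack of edge-independence forbids a direct product decomposition around the $n$ marked points; this is precisely what Lemma~\ref{lem::spatial} is designed to handle, but it must be combined carefully with the one-arm coupling of Lemma~\ref{lem::separation} to ensure that the $n$ local couplings can be performed simultaneously with a controlled total error. Second, the target event has zero measure in the continuum, so the right-hand side of~\eqref{eqn::cvg_proba_aux1} is not a genuine conditional probability but must be defined through a double limit in $\eta_m,\delta_m$; the main technical point is to check that this double limit exists and that the order of the limits $a\to 0$ and $\eta_m/\delta_m\to 0$ can be interchanged, which is achieved via the uniform estimates provided by Lemmas~\ref{lem::separation} and~\ref{lem::spatial}.
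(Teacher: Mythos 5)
Your overall strategy is the one the paper follows: replace the point-to-boundary conditioning by annulus-crossing events via the coupling of Lemma~\ref{lem::separation}, pass to the continuum via Theorem~\ref{thm::CLE}, and define the right-hand side of~\eqref{eqn::cvg_proba_aux1} through a double limit in $\eta$ and $\delta_m$. However, Step 1 as written has a flaw. You propose running $n$ single-point couplings and gluing them by an approximate factorisation of the joint conditional law, with a claimed multiplicative error $O\bigl((\delta_m/\epsilon)^{c_4}\bigr)$. Lemma~\ref{lem::spatial} does not give this: decoupling the $n$ one-arm events at scale $\epsilon$ around points at mutual distance of order $1$ produces an error of order $(\epsilon/\mathrm{sep})^{c_4}$, which does \emph{not} vanish as $a\to 0$ or $m\to\infty$ for fixed $\epsilon$, so the asserted identity of the ratio with $\mathbb{P}^a[G\mid \mathcal{A}_{\eta_m,\epsilon}(z_j^a),\,\forall j]$ would only hold up to a non-vanishing factor. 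The factorisation is in fact unnecessary here (it is the content of Lemma~\ref{lem::norma_1}, not of this lemma): the denominator is already the \emph{joint} one-arm probability, and the paper simply constructs a single coupling between the two jointly conditioned measures, with failure probability $n c_1(\eta/\delta_m)^{c_2}$; the single-point construction of Lemma~\ref{lem::separation} extends verbatim to this multi-point conditioning.

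The more serious gap is that your argument only addresses the case $\parti=\parti_n$ in which all points are connected. For a partition with several blocks, e.g.\ $\parti=(\{1,2\},\{3,4\})$, the event $G(\parti)$ contains the \emph{disconnection} $z_1\centernot{\longleftrightarrow}z_3$, and the statement that ``two points lie in the same cluster iff the macroscopic clusters carrying the outermost circuits at scale $\delta_m$ merge'' hides the key difficulty: the clusters of $B_{\delta_m}(z_1)$ and $B_{\delta_m}(z_3)$ may be disjoint outside $\cup_j B_{\delta_m}(z_j)$ and yet be connected through the small balls, so the discrete event is not determined by the loop ensemble at scales $\geq\delta_m$. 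The paper controls the probability of this bad event by combining the spatial mixing property with the polychromatic arm exponents of~\cite{PolyFKIsing} (the bound $\mathbb{P}[\mathcal{F}_{\sqrt{\delta_m},L}(z_1)]\lesssim(\sqrt{\delta_m}/L)^{35/24-o(1)}$ beats the cost $(\delta_m/\epsilon)^{-1/2}$ of the conditioning), and this estimate is what makes the sandwich close in the multi-block case. Relatedly, your positivity argument via an FKG circuit surrounding \emph{all} the $z_j$'s only lower-bounds the probability of full connection; for disconnection patterns one needs the RSW gluing arguments of the type in~\cite{CamiaNewman2009ising} to produce closed circuits separating the blocks. Without these two ingredients the lemma is only proved for $\parti=\parti_n$.
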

By standard RSW arguments (see, e.g., the proofs of Lemmas 2.1 and 2.2 of~\cite{CamiaNewman2009ising}), there exists a constant $c>0$, independent of $a$, such that 
\begin{equation}
c<\frac{\mathbb{P}^a\left[G(\parti;z_1^a,\ldots,z_{n}^a)\right]}{\mathbb{P}^a\left[z_j^a\longleftrightarrow \partial B_{\epsilon}(z_j^a), \enspace 1\leq j\leq n\right]}= \mathbb{P}^a\left[G(\parti;z_1^a,\ldots,z_{n}^a)| z_j^a\longleftrightarrow \partial B_{\epsilon}(z_j^a),\enspace 1\leq j\leq  n\right]\leq 1.
\end{equation}
Thus, any subsequential limit of $\mathbb{P}^a\left[G(\parti;z_1^a,\ldots,z_{n}^a)\right]/\mathbb{P}^a\big[z_j^a\longleftrightarrow \partial B_{\epsilon}(z_j^a), \enspace 1\leq j\leq n\big]$ must belong to $(0,\infty)$. We will prove Lemma~\ref{lem::cvg_proba_aux1} in two steps: first, we will prove it for general $n$ and $\parti=\parti_n=(\{1,2,\ldots,n\})$, that is, when all vertices belong to the same open cluster; then, we will give the proof for $n=4$ and $\parti=(\{1,2\},\{3,4\})$. All other cases can be treated similarly.

\begin{proof}[Proof of Lemma~\ref{lem::cvg_proba_aux1} for $\parti=\parti_n$]
Since the strategy is essentially the same as in~\cite[Proof of Theorem~1.1]{Cam24}, we only sketch the proof here. 


 For fixed $\delta_m$, choose $\eta>0$ such that $a<\eta<\delta_m<\epsilon$. Thanks to Lemma~\ref{lem::separation}, there exists a coupling, $\mathbb{P}^{a}_{\eta,\delta_m}$, between configurations $\tilde{\Lambda}^a$ and $\widehat{\Lambda}^a$ distributed to $\mathbb{P}^a\left[\;\cdot\; |z_j^a\longleftrightarrow \partial B_{\epsilon}(z_j^a),\enspace 1\leq j\leq n\right]$ and $\mathbb{P}^a\left[\;\cdot \;| \mathcal{A}_{\eta,\epsilon}(z_j^a),\enspace 1\leq j\leq n\right]$, respectively, and an event $\mathcal{S}_a$ such that 
 \begin{equation*}
 	\cap_{i=1}^n \widehat{\mathcal{O}}_{\eta,\delta_m}(z_j^a)\subseteq\mathcal{S}_a,
 \end{equation*}
where $\widehat{\mathcal{O}}_{\eta,\delta_m}(z_j^a)$ denotes the event that there exists an open circuit surrounding $z_j^a$ inside $B_{\delta_m}(z_j^a)\setminus B_{\eta}(z_j^a)$ in $\widehat{\Lambda}^a$, and such that if $\mathcal{S}_a$ happens, then the states of the edges outside $\cup_{j=1}^n B_{\delta_m}(z_j^a)$ are the same in $\tilde{\Lambda}^a$ and $\widehat{\Lambda}^a$. 
	Thanks to RSW estimates, we have 
	\begin{equation} \label{eqn::S_a_near1}
		\mathbb{P}^a_{\eta,\delta_m}\left[\mathcal{S}_a\right]\geq 1-nc_1\left(\frac{\eta}{\delta_m}\right)^{c_2},
	\end{equation}
	where $c_1$ and $c_2$ are constants in Lemma~\ref{lem::spatial}.
	
	Note that 
	\begin{align}
	&	\mathbb{P}^a\left[\cap_{j\neq k}\left(\left\{ B_{\delta_m}(z_j^a)\longleftrightarrow B_{\delta_m}(z_k^a)\right\}\cap \mathcal{O}_{\delta_m,\epsilon}(z_j^a)\cap \mathcal{O}_{\delta_m,\epsilon}(z_k^a)\right)| z_j^a\longleftrightarrow\partial B_{\epsilon}(z_j^a),\enspace 1\leq j\leq n\right] \label{eqn::reduc_1_aux1}\\
		\leq &\mathbb{P}^a\left[z_j^a\longleftrightarrow z_k^a,\enspace 1\leq j< k\leq n| z_j^a\longleftrightarrow\partial B_{\epsilon}(z_j^a),\enspace 1\leq j\leq n\right]\notag\\
		\leq& \mathbb{P}^a\left[ B_{\delta_m}(z_j^a)\longleftrightarrow \partial B_{\delta_m}(z_k^a),\enspace 1\leq j< k\leq n| z_j^a\longleftrightarrow\partial B_{\epsilon}(z_j^a),\enspace 1\leq j\leq n \right]. \notag
	\end{align}
On the one hand,  one can show that 
\begin{align*}
	&\limsup_{a\to 0} \mathbb{P}^a\left[ B_{\delta_m}(z_j^a)\longleftrightarrow  B_{\delta_m}(z_k^a),\enspace 1\leq j< k\leq n| z_j^a\longleftrightarrow\partial B_{\epsilon}(z_j^a),\enspace 1\leq j\leq n\right]\\
	\leq & \mathbb{P}\left[B_{\delta_m}(z_j)\longleftrightarrow B_{\delta_m}(z_k),\enspace 1\leq j<k\leq n| \mathcal{A}_{\eta,\epsilon}(z_k),\enspace 1\leq j\leq n\right]+\limsup_{a\to 0}\left(1-\mathbb{P}^a_{\eta,\delta_m}\left[\mathcal{S}_a\right]\right).
\end{align*}
Thanks to~\eqref{eqn::S_a_near1}, letting $\eta\to 0$ (along some subsequence $\{\eta_r\}_{r=1}^{\infty}$) yields
\begin{align*}
	&\limsup_{a\to 0} \mathbb{P}^a\left[ B_{\delta_m}(z_j^a)\longleftrightarrow  B_{\delta_m}(z_k^a),\enspace 1\leq j< k\leq n| z_j^a\longleftrightarrow\partial B_{\epsilon}(z_j^a),\enspace 1\leq j\leq n\right]\\
	\leq &\lim_{r\to \infty} \mathbb{P}\left[B_{\delta_m}(z_j)\longleftrightarrow B_{\delta_m}(z_k),\enspace 1\leq j<k\leq n| \mathcal{A}_{\eta_r,\epsilon}(z_j),\enspace 1\leq j\leq n\right].
\end{align*}
Similarly, one can also show that 
\begin{align*}
	&\liminf_{a\to 0} \mathbb{P}^a\left[ B_{\delta_m}(z_j^a)\longleftrightarrow  B_{\delta_m}(z_k^a),\enspace 1\leq j< k\leq n| z_j^a\longleftrightarrow\partial B_{\epsilon}(z_j^a),\enspace 1\leq j\leq n\right]\\
	\geq &\lim_{r\to \infty} \mathbb{P}\left[B_{\delta_m}(z_j)\longleftrightarrow B_{\delta_m}(z_k),\enspace 1\leq j<k\leq n| \mathcal{A}_{\eta_r,\epsilon}(z_j),\enspace 1\leq j\leq n\right].
\end{align*}
Thus, we have 
\begin{align}
	&	\lim_{a\to 0} \mathbb{P}^a\left[ B_{\delta_m}(z_j^a)\longleftrightarrow  B_{\delta_m}(z_k^a),\enspace 1\leq j< k\leq n| z_j^a\longleftrightarrow\partial B_{\epsilon}(z_j^a),\enspace 1\leq j\leq n\right]\notag\\
	=&\lim_{\eta\to 0} \mathbb{P}\left[B_{\delta_m}(z_j)\longleftrightarrow B_{\delta_m}(z_k),\enspace 1\leq j<k\leq n| \mathcal{A}_{\eta,\epsilon}(z_j),\enspace 1\leq j\leq n\right]\notag\\
	=:& \mathbb{P}\left[B_{\delta_m}(z_j)\longleftrightarrow B_{\delta_m}(z_k),\enspace 1\leq j<k\leq n| z_j\longleftrightarrow \partial B_{\epsilon}(z_j),\enspace 1\leq j\leq n\right]. \label{eqn::def_lemaux1_aux1}
\end{align}
Since the quantities in the above equation are decreasing in $m$, we have 
\begin{align}
	&\lim_{m\to \infty}	\lim_{a\to 0} \mathbb{P}^a\left[ B_{\delta_m}(z_j^a)\longleftrightarrow  B_{\delta_m}(z_k^a),\enspace 1\leq j< k\leq n| z_j^a\longleftrightarrow\partial B_{\epsilon}(z_j^a),\enspace 1\leq j\leq n\right]\notag\\
	=&\lim_{m\to \infty} \mathbb{P}\left[B_{\delta_m}(z_j)\longleftrightarrow B_{\delta_m}(z_k),\enspace 1\leq j<k\leq n| z_j\longleftrightarrow \partial B_{\epsilon}(z_j),\enspace 1\leq j\leq n\right]\notag\\
	=:&\mathbb{P}\left[z_j\longleftrightarrow z_k, \enspace 1\leq j<k\leq n| z_j\longleftrightarrow B_{\epsilon}(z_j),\enspace 1\leq j\leq n\right]. \label{eqn::def_lemaux1_aux2}
\end{align}

On the other hand, for the term in~\eqref{eqn::reduc_1_aux1}, one can use (thanks to the FKG inequality and RSW estimates) 
\begin{equation*}
	\lim_{m\to \infty}\liminf_{a\to0} \mathbb{P}^a\left[\cap_{j=1}^n \mathcal{O}_{\delta_m,\epsilon}|\cap_{j\neq k}\left(\{B_{\delta_m}(z_j)\longleftrightarrow B_{\delta_m}(z_k)\}\cap \{z_j\longrightarrow B_{\epsilon}(z_j)\}\cap\{z_k\longleftrightarrow B_{\epsilon}(z_k)\}\right)\right]=1,
\end{equation*}
to show that 
\begin{align*}
	&\lim_{m\to \infty}\liminf_{a\to 0}\mathbb{P}^a\left[\cap_{j\neq k}\left(\left\{ B_{\delta_m}(z_j^a)\longleftrightarrow B_{\delta_m}(z_k^a)\right\}\cap \mathcal{O}_{\delta_m,\epsilon}(z_j^a)\cap \mathcal{O}_{\delta_m,\epsilon}(z_k^a)\right)| z_j^a\longleftrightarrow\partial B_{\epsilon}(z_j^a),\enspace 1\leq j\leq n\right] \\
	=& \mathbb{P}\left[z_j\longleftrightarrow z_k, \enspace 1\leq j<k\leq n| z_j\longleftrightarrow B_{\epsilon}(z_j),\enspace 1\leq j\leq n\right].
\end{align*}

Combining the observations above, we obtain the desired result. 
\end{proof}
%
%

\begin{proof}[Proof of Lemma~\ref{lem::cvg_proba_aux1} for $\parti=(\{1,2\},\{3,4\})$]
One can proceed as above to show that for $k,m\geq 1$,  
\begin{align*}
	&\mathbb{P}\left[B_{\delta_m}(z_1)\longleftrightarrow B_{\delta_{m}}(z_2), B_{\delta_m}(z_3)\longleftrightarrow B_{\delta_m}(z_4), B_{\delta_k}(z_1)\centernot{\longleftrightarrow}B_{\delta_k}(z_3)|z_j\longleftrightarrow\partial B_{\epsilon}(z_j),\enspace 1\leq j\leq 4\right]\\
	:=&\lim_{\eta\to 0}\mathbb{P}\left[B_{\delta_m}(z_1)\longleftrightarrow B_{\delta_{m}}(z_2), B_{\delta_m}(z_3)\longleftrightarrow B_{\delta_m}(z_4), B_{\delta_k}(z_1)\centernot{\longleftrightarrow}B_{\delta_k}(z_3)|\mathcal{A}_{\eta,\epsilon}(z_j),\enspace 1\leq j\leq 4\right]
\end{align*}
exists. We define
\begin{align} 
	& \mathbb{P}\left[z_1\longleftrightarrow z_2\centernot{\longleftrightarrow} z_3\longleftrightarrow z_4 |z_j\longleftrightarrow \partial B_{\epsilon}(z_j),\enspace 1\leq j\leq 4\right]\notag \\
	& \quad :=\mathbb{P}\big[\cap_{m\geq 1}\cap_{k\leq m}\{B_{\delta_m}(z_1)\longleftrightarrow B_{\delta_{m}}(z_2), B_{\delta_m}(z_3)\longleftrightarrow B_{\delta_m}(z_4), B_{\delta_k}(z_1)\centernot{\longleftrightarrow}B_{\delta_k}(z_3)\} \\
    & \qquad \qquad \qquad \qquad \qquad \vert \, z_j\longleftrightarrow \partial B_{\epsilon}(z_j),\enspace 1\leq j\leq 4\big]\notag \\
	& \quad = \lim_{k\to \infty}\lim_{m\to\infty} \mathbb{P}\big[B_{\delta_m}(z_1)\longleftrightarrow B_{\delta_{m}}(z_2), B_{\delta_m}(z_3)\longleftrightarrow B_{\delta_m}(z_4), B_{\delta_k}(z_1)\centernot{\longleftrightarrow}B_{\delta_k}(z_3) \\
    & \qquad \qquad \qquad \qquad \qquad \vert \, z_j\longleftrightarrow\partial B_{\epsilon}(z_j),\enspace 1\leq j\leq 4 \big].\label{eqn::P_def_4}
\end{align} 
We denote by $\{ B_{\delta_m}(z_1)\longleftrightarrow B_{\delta_m}(z_2)\circ B_{\delta_m}(z_3)\longleftrightarrow B_{\delta_m}(z_4)\}$ the event that, outside $\cup_{j=1}^4 B_{\delta_m}(z_j)$, there are two disjoint open clusters connecting $B_{\delta_m}(z_1)$ to $B_{\delta_m}(z_2)$ and $B_{\delta_m}(z_3)$ to $B_{\delta_m}(z_4)$, respectively.
Then one can proceed as in~\cite[Proof of Theorem~1.5]{Cam24} to show that 
\begin{align*}
	& \lim_{a\to 0}\mathbb{P}^a\left[z_1^a\longleftrightarrow z_2^a\centernot{\longleftrightarrow}z_3^a\longleftrightarrow z_4^a | z_j^a\longleftrightarrow \partial B_{\epsilon}(z_j^a),\enspace 1\leq j\leq 4\right] \\
	& \quad = \mathbb{P}\left[z_1\longleftrightarrow z_2\centernot{\longleftrightarrow} z_3\longleftrightarrow z_4 |z_j\longleftrightarrow \partial B_{\epsilon}(z_j),\enspace 1\leq j\leq 4\right],
\end{align*}
with~\cite[Eq.~(2.56)]{Cam24} replaced by
\begin{align*}
	&\mathbb{P}\left[\{ B_{\delta_m}(z_1)\longleftrightarrow B_{\delta_m}(z_2)\circ B_{\delta_m}(z_3)\longleftrightarrow B_{\delta_m}(z_4)\}\cap \{B_{\delta_k}(z_1)\longleftrightarrow B_{\delta_k}(z_3)\} | z_j\longleftrightarrow \partial B_{\epsilon}(z_j),\enspace 1\leq j\leq 4\right]\\
	& = \lim_{\eta\to 0}\mathbb{P}\left[\{ B_{\delta_m}(z_1)\longleftrightarrow B_{\delta_m}(z_2)\circ B_{\delta_m}(z_3)\longleftrightarrow B_{\delta_m}(z_4)\}\cap \{B_{\delta_k}(z_1)\longleftrightarrow B_{\delta_k}(z_3)\} | \mathcal{A}_{\eta,\epsilon}(z_j),\enspace 1\leq j\leq 4\right]\\
	& \leq\lim_{\eta\to 0}\frac{\mathbb{P}\left[ (\cup_{j=1}^4 \mathcal{F}_{\sqrt{\delta_m},L}(z_j)) \cap \{\mathcal{A}_{\eta,\delta_m}(z_j),\enspace 1\leq j\leq 4\}\right]}{\mathbb{P}[\mathcal{A}_{\eta,\epsilon}(z_j),\enspace 1\leq j\leq 4]}\\
	& \leq c \, \mathbb{P}\left[\mathcal{F}_{\sqrt{\delta_m},L}(z_1)\right] \left(\frac{\delta_m}{\epsilon}\right)^{-\frac{1}{2}}\leq \tilde{c} \left(\frac{\sqrt{\delta_m}}{L}\right)^{\frac{35}{24}-\frac{1}{100}}\times \left(\frac{\delta_m}{\epsilon}\right)^{-1/2},
\end{align*}
for some $L>0$ independent of $m$ (when $m$ is large enough), where $c,\tilde{c}\in (0,\infty)$ are two constants that do not depend on $m$, the first inequality in the last line is due to the spatial mixing property in Lemma~\ref{lem::spatial} and the exponent in Theorem~\ref{thm::two_point_Ising}\footnote{Indeed, one can replace Theorem~\ref{thm::two_point_Ising} with Lemma~\ref{lem::scaling} below.}, and where the last inequality follows from the fact that $\mathbb{P}\left[\mathcal{F}_{\sqrt{\delta_m},L}(z_1)\right]\sim \left(\frac{\sqrt{\delta_m}}{L}\right)^{\frac{35}{24}+o(1)}$ as $\frac{\sqrt{\delta_m}}{L}\to 0$, which follows from~\cite[Theorems~3 and~4]{PolyFKIsing}.
\end{proof}

\subsubsection{Proper normalization and proof of part~\ref{item::thm_cvg_proba} of Theorem~\ref{thm::cvg_proba}}
Lemma~\ref{lem::cvg_proba_aux1} provides an intermediate convergence result for crossing probabilities.
In order to obtain part~\ref{item::thm_cvg_proba} of Theorem~\ref{thm::cvg_proba}, we need to replace the denominator in~\eqref{eqn::cvg_proba_aux1}, which depends on $\Omega^a$ and $z_1^a,\ldots,z_{n}^a$, with a normalization that is independent of $\Omega^a$ and $z_1^a,\ldots,z_{n}^a$.
This is the goal of the present section. We note that such a step, which is crucial for the FK-Ising model, is not needed for percolation because in the latter model independence implies that the analog of the denominator in~\eqref{eqn::cvg_proba_aux1} can be immediately written as the $n^{th}$ power of a one-arm probability.

Recall that we denote by $\mathbb{P}^a_{\Omega}$ the critical FK-Ising measure on $\Omega^a$ with free boundary condition, and by $\mathbb{P}_{\Omega}$ the law of the limiting FK-Ising loop ensemble in $\Omega$ with free boundary condition. For $M\geq 1$, let $\Box_M=[-M,M]^2$ and $\Box_M^a=\Box_M\cap a \mathbb{Z}^2$.
{
\begin{lemma} \label{lem::norma_1}
	 With the notation of Theorem~\ref{thm::cvg_proba}, for small enough $\epsilon>0$, we have 
	\begin{align*}
		&\lim_{a\to 0}\frac{\mathbb{P}^a_{\Omega}\left[z_j^a\longleftrightarrow \partial B_{\epsilon}(z_j^a),\enspace 1\leq j\leq n\right]}{\Big(\mathbb{P}^a_{\mathbb{Z}^2}\left[0\longleftrightarrow \partial B_1(0)\right]\Big)^n}\\
		&\quad=\lim_{m\to\infty} \frac{\mathbb{P}_{\Omega}\left[z_j\longleftrightarrow \partial B_{\epsilon}(z_j),\enspace 1\leq j\leq n | z_j\longleftrightarrow \partial B_{\delta_m}(z_j),\enspace 1\leq j\leq n\right]}{\Big(\mathbb{P}_{\mathbb{C}}\left[0\longleftrightarrow \partial B_{1}(0)| 0\longleftrightarrow \partial B_{\delta_m}(0)\right]\Big)^n},
	\end{align*}
	where the equation means that the limits on both sides exist in $(0,\infty)$ and that they are equal, and where
	\begin{align*}
	\mathbb{P}_{\mathbb{C}}\left[0\longleftrightarrow \partial B_{1}(0)| 0\longleftrightarrow \partial B_{\delta_m}(0)\right]:=\lim_{M\to \infty}	\mathbb{P}_{[-M,M]^2}\left[0\longleftrightarrow \partial B_{1}(0)| 0\longleftrightarrow \partial B_{\delta_m}(0)\right]. 
	\end{align*}
\end{lemma}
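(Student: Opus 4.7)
The plan is a two-scale decomposition at an intermediate mesoscopic scale $\delta_m$ with $a \ll \delta_m \ll \epsilon$. The idea is that at scales $\gtrsim \delta_m$ the dynamics is essentially continuum, and can be handled by CLE convergence and the conditional-probability argument already used in Lemma~\ref{lem::cvg_proba_aux1}; while at scales $\lesssim \delta_m$ (the one-arm probabilities up to $\delta_m$) one has approximate independence between the $n$ balls and approximate equality between $\Omega^a$ and $a\mathbb{Z}^2$, both delivered by the spatial mixing property of Lemma~\ref{lem::spatial}.

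Concretely, writing $\mathcal{A}_j^{(r)} := \{z_j^a \longleftrightarrow \partial B_r(z_j^a)\}$ and factorizing
\begin{equation*}
\mathbb{P}^a_\Omega\bigl[\cap_j \mathcal{A}_j^{(\epsilon)}\bigr] = \mathbb{P}^a_\Omega\bigl[\cap_j \mathcal{A}_j^{(\epsilon)} \,\big|\, \cap_j \mathcal{A}_j^{(\delta_m)}\bigr] \cdot \mathbb{P}^a_\Omega\bigl[\cap_j \mathcal{A}_j^{(\delta_m)}\bigr],
\end{equation*}
(and similarly for $\mathbb{P}^a_{\mathbb{Z}^2}[0 \longleftrightarrow \partial B_1(0)]$ with respect to $\{0 \longleftrightarrow \partial B_{\delta_m}(0)\}$), I would proceed in three steps. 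First, the conditional factor converges as $a \to 0$ to the corresponding continuum conditional probability by exactly the same argument as in the proof of Lemma~\ref{lem::cvg_proba_aux1} (one-arm coupling from Lemma~\ref{lem::separation} together with CLE convergence, Theorem~\ref{thm::CLE}). Second, iterating the spatial mixing estimate of Lemma~\ref{lem::spatial} on the $n$ pairwise well-separated balls gives
\begin{equation*}
\mathbb{P}^a_\Omega\bigl[\cap_j \mathcal{A}_j^{(\delta_m)}\bigr] = \prod_{j=1}^n \mathbb{P}^a_\Omega\bigl[\mathcal{A}_j^{(\delta_m)}\bigr]\,\bigl(1 + O(\delta_m^{c_4})\bigr).
\end{equation*}
Third, a further application of Lemma~\ref{lem::spatial}---comparing the free boundary condition on $\partial \Omega^a$ with those of the infinite-volume limit on $a\mathbb{Z}^2$---combined with translation invariance gives $\mathbb{P}^a_\Omega\bigl[\mathcal{A}_j^{(\delta_m)}\bigr] = \mathbb{P}^a_{\mathbb{Z}^2}\bigl[0 \longleftrightarrow \partial B_{\delta_m}(0)\bigr](1 + O(\delta_m^{c_4}))$, uniformly in $a$.

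Putting these three steps together, the ratio in the lemma equals
\begin{equation*}
\frac{\mathbb{P}^a_\Omega\bigl[\cap_j \mathcal{A}_j^{(\epsilon)} \,\big|\, \cap_j \mathcal{A}_j^{(\delta_m)}\bigr]}{\Bigl(\mathbb{P}^a_{\mathbb{Z}^2}\bigl[0 \longleftrightarrow \partial B_1(0) \,\big|\, 0 \longleftrightarrow \partial B_{\delta_m}(0)\bigr]\Bigr)^n}\,\bigl(1 + O(\delta_m^{c_4})\bigr).
\end{equation*}
Taking $a \to 0$ (with $m$ fixed) yields the continuum analogue with the same $O(\delta_m^{c_4})$ error, and then letting $m \to \infty$ removes that error; since the left-hand side of the lemma is $m$-independent, both limits exist and coincide, giving the claim. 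The existence of the $\mathbb{P}_{\mathbb{C}}$-quantity as the $M \to \infty$ limit of $\mathbb{P}_{[-M,M]^2}$-probabilities follows from the same spatial mixing argument (showing a Cauchy sequence in $M$).

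The main obstacle lies in the comparison $\mathbb{P}^a_\Omega\bigl[\mathcal{A}_j^{(\delta_m)}\bigr] \approx \mathbb{P}^a_{\mathbb{Z}^2}\bigl[0 \longleftrightarrow \partial B_{\delta_m}(0)\bigr]$: Lemma~\ref{lem::spatial} only compares different boundary conditions on a \emph{fixed} finite box $\Box_M$, so one must insert an intermediate box $\Box_M^a$ with $\delta_m \ll M \ll \min_j d(z_j, \partial \Omega)$, apply spatial mixing twice (once to pass from $\Omega^a$ to $\Box_M^a$, once from $\Box_M^a$ to $a\mathbb{Z}^2$), and invoke the standard existence of the FK-Ising infinite-volume limit on $\mathbb{Z}^2$ (cf.~\cite{Duminil-Copin-PIMS_lectures}). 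Controlling both applications with a single $O(\delta_m^{c_4})$ error, \emph{uniformly in} $a$, and checking that the exponent $c_4$ produced by iterating Lemma~\ref{lem::spatial} $n$ times is still positive, is the key technical point.
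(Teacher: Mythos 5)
Your proposal follows essentially the same route as the paper: a two-scale decomposition at the mesoscopic scale $\delta_m$, with the conditional factor handled by the coupling/CLE argument of Lemma~\ref{lem::cvg_proba_aux1} (passing through a large box $\Box_M$ and a Cauchy-in-$M$ argument, exactly the paper's terms $T_1^{(a,m)}$ and $T_2^{(a,m)}$), and the small-scale one-arm probabilities decoupled and compared with the full-plane quantity via the spatial mixing property (the paper's $T_3^{(a,m)}\to 1$), followed by the same sandwich argument in $m$. The only detail worth adding is the uniform two-sided RSW/FKG bound on the conditional-probability ratio, which the paper uses to guarantee that the common limit lies in $(0,\infty)$.
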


\begin{proof}
	Let $M\geq 10$. We write
	\begin{align} \label{eqn::norma_1}
	\begin{split}
			\frac{\mathbb{P}^a_{\Omega}\left[z_j^a\longleftrightarrow \partial B_{\epsilon}(z_j^a),\enspace 1\leq j\leq n\right]}{\Big(\mathbb{P}^a_{\mathbb{Z}^2}\left[0\longleftrightarrow \partial B_1(0)\right]\Big)^n}=&\underbrace{\frac{\mathbb{P}^a_{\Omega}\left[z_j^a\longleftrightarrow \partial B_{\epsilon}(z_j^a),\enspace 1\leq j\leq n| z_j^a\longleftrightarrow \partial B_{\delta_m}(z_j^a),\enspace 1\leq j\leq n\right]}{\Big(\mathbb{P}^a_{\Box_M}\left[0\longleftrightarrow \partial B_1(0)| 0\longleftrightarrow \partial B_{\delta_m}(0)\right]\Big)^n}}_{T_1^{(a,m)}}\\
		&\quad\times  \underbrace{\left(\frac{\mathbb{P}^a_{\Box_M}\left[0\longleftrightarrow \partial B_1(0)| 0\longleftrightarrow \partial B_{\delta_m}(0)\right]}{\mathbb{P}^a_{\mathbb{Z}^2}\left[0\longleftrightarrow \partial B_1(0)| 0\longleftrightarrow \partial B_{\delta_m}(0)\right]}\right)^n}_{T_2^{(a,m)}}\\
		&\quad\times \underbrace{\frac{\mathbb{P}^a_{\Omega}\left[z_j^a\longleftrightarrow \partial B_{\delta_m}(z_j^a),\enspace 1\leq j\leq n\right]}{\Big(\mathbb{P}^a_{\mathbb{Z}^2}[0\longleftrightarrow \partial B_{\delta_m}(0)]\Big)^n}}_{T_3^{(a,m)}}. 
	\end{split}
	\end{align}

	For the term $T_1^{(a,m)}$, one can proceed as in the proof of Lemma~\ref{lem::cvg_proba_aux1} to show that 
	\begin{align*}
		V_{m,M}:=\lim_{a\to 0} T_1^{(a,m)}=& \frac{\mathbb{P}_{\Omega}\left[z_j\longleftrightarrow \partial B_{\epsilon}(z_j),\enspace 1\leq j\leq n | z_j\longleftrightarrow \partial B_{\delta_m}(z_j),\enspace 1\leq j\leq n\right]}{\Big(\mathbb{P}_{\Box_M}\left[0\longleftrightarrow \partial B_1(0)|0\longleftrightarrow \partial B_{\delta_m}(0)\right]\Big)^n}\\
		:=&\lim_{k\to \infty} \lim_{\eta\to 0}\frac{\mathbb{P}_{\Omega}\left[\partial B_{\delta_k}(z_j)\longleftrightarrow \partial B_{\epsilon}(z_j),\enspace 1\leq j\leq n | \partial B_{\eta}(z_j)\longleftrightarrow \partial B_{\delta_m}(z_j),\enspace 1\leq j\leq n\right]}{\Big(\mathbb{P}_{\Box_M}\left[\partial B_{\delta_k}(0)\longleftrightarrow \partial B_1(0)|\partial B_{\eta}(0)\longleftrightarrow \partial B_{\delta_m}(0)\right]\Big)^n}.
	\end{align*}
	From the spatial mixing property in Lemma~\ref{lem::spatial}, we conclude that $\{V_{m,M}\}_{M=10}^{\infty}$ is a Cauchy sequence. Consequently, we can define 
	\begin{align*}
		V_m:=\lim_{M\to\infty} V_{m,M}:=\lim_{M\to \infty}\frac{\mathbb{P}_{\Omega}\left[z_j\longleftrightarrow \partial B_{\epsilon}(z_j),\enspace 1\leq j\leq n | z_j\longleftrightarrow \partial B_{\delta_m}(z_j),\enspace 1\leq j\leq n\right]}{\Big(\mathbb{P}_{\Box_M}\left[0\longleftrightarrow \partial B_1(0)|0\longleftrightarrow \partial B_{\delta_m}(0)\right]\Big)^n}.
	\end{align*}
	A direct application of RSW arguments and the FKG inequality (see, e.g., the proofs of Lemmas 2.1 and 2.2 of~\cite{CamiaNewman2009ising}) implies that there exist two constants $c_3\, c_3\in (0,\infty)$ that do not depend on $m$ such that 
	\begin{equation*}
		c_3\leq V_m\leq c_4,
	\end{equation*}
	which implies that any subsequential limit of the sequence $\{V_m\}_{m=1}^{\infty}$ must belong to $(0,\infty)$. Let $V$ be any subsequential limit of $\{V_m\}_{m=1}^{\infty}$.
	
	For the terms $T_2^{(a,m)}$ and $T_3^{(a,m)}$, it follows from the spatial mixing property in Lemma~\ref{lem::spatial} that 
	\begin{equation*}
		\lim_{m\to \infty}\lim_{a\to 0} T_2^{(a,m)} =1,\quad \lim_{m\to \infty}\lim_{a\to 0} T_3^{(a,m)}=1. 
	\end{equation*}
	
	Combining these observations with~\eqref{eqn::norma_1} yields 
	\begin{align*}
		V\leq \liminf_{a\to 0}     \frac{\mathbb{P}^a_{\Omega}\left[z_j^a\longleftrightarrow \partial B_{\epsilon}(z_j^a),\enspace 1\leq j\leq n\right]}{\Big(\mathbb{P}^a_{\mathbb{Z}^2}\left[0\longleftrightarrow \partial B_1(0)\right]\Big)^n}   \leq \limsup_{a\to 0}  \frac{\mathbb{P}^a_{\Omega}\left[z_j^a\longleftrightarrow \partial B_{\epsilon}(z_j^a),\enspace 1\leq j\leq n\right]}{\Big(\mathbb{P}^a_{\mathbb{Z}^2}\left[0\longleftrightarrow \partial B_1(0)\right]\Big)^n}   \leq V, 
	\end{align*}
	which implies that $V$ is independent of the choice of subsequence and that 
	\begin{align*}
		\lim_{a\to 0} \frac{\mathbb{P}^a_{\Omega}\left[z_j^a\longleftrightarrow \partial B_{\epsilon}(z_j^a),\enspace 1\leq j\leq n\right]}{\Big(\mathbb{P}^a_{\mathbb{Z}^2}\left[0\longleftrightarrow \partial B_1(0)\right]\Big)^n} =V=\lim_{m\to \infty} V_m. 
	\end{align*}
\end{proof}

\begin{lemma}\label{lem::norma_2}
Let $y_1^a, y_2^a\in a\mathbb{Z}^2$ satisfy $\lim_{a\to 0}y_1^a=0$ and $\lim_{a\to 0}y_2^a=1$. Then
\begin{align*}
	\lim_{a\to 0} \frac{\mathbb{P}_{\mathbb{Z}^2}^a\left[y_1^a\longleftrightarrow y_2^a\right]}{\Big(\mathbb{P}_{\mathbb{Z}^2}^a\left[0\longleftrightarrow \partial B_1(0)\right]\Big)^2}=C,
\end{align*}
for some constant $C\in (0,\infty)$.
\end{lemma}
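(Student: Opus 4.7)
The plan is to deduce the lemma as a direct combination of Lemma~\ref{lem::cvg_proba_aux1} (applied with $n=2$ and $\parti=\parti_2=(\{1,2\})$) and Lemma~\ref{lem::norma_1} (applied with $n=2$), both in the special case $\Omega=\mathbb{C}$, $z_1=0$, $z_2=1$, $z_j^a=y_j^a$.

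Concretely, I would fix any $\epsilon\in(0,1/100)$ and write the telescoping identity
\begin{align*}
    \frac{\mathbb{P}_{\mathbb{Z}^2}^a[y_1^a\longleftrightarrow y_2^a]}{\bigl(\mathbb{P}_{\mathbb{Z}^2}^a[0\longleftrightarrow \partial B_1(0)]\bigr)^2}
    &= \frac{\mathbb{P}_{\mathbb{Z}^2}^a[y_1^a\longleftrightarrow y_2^a]}{\mathbb{P}_{\mathbb{Z}^2}^a[y_1^a\longleftrightarrow \partial B_\epsilon(y_1^a),\ y_2^a\longleftrightarrow \partial B_\epsilon(y_2^a)]} \\
    &\qquad\times \frac{\mathbb{P}_{\mathbb{Z}^2}^a[y_1^a\longleftrightarrow \partial B_\epsilon(y_1^a),\ y_2^a\longleftrightarrow \partial B_\epsilon(y_2^a)]}{\bigl(\mathbb{P}_{\mathbb{Z}^2}^a[0\longleftrightarrow \partial B_1(0)]\bigr)^2}.
\end{align*}
Lemma~\ref{lem::cvg_proba_aux1} asserts that the first factor converges as $a\to 0$ to a quantity $W(\epsilon)\in(0,1]$, while Lemma~\ref{lem::norma_1} asserts that the second factor converges to some $V(\epsilon)\in(0,\infty)$. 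Multiplying, the ratio converges to $C:=W(\epsilon)V(\epsilon)\in(0,\infty)$, and this value is automatically independent of $\epsilon$ since the left-hand side does not depend on $\epsilon$.

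The main subtlety is that Lemmas~\ref{lem::cvg_proba_aux1} and~\ref{lem::norma_1} are stated in terms of a sequence of discrete simply connected domains $\Omega^a$ converging to $\Omega$ in the Hausdorff metric and the free-boundary critical FK-Ising measure $\mathbb{P}^a_\Omega$ on $\Omega^a$, whereas here we are working with the full-plane measure $\mathbb{P}^a_{\mathbb{Z}^2}$. The way I would handle this is by approximating $\mathbb{C}$ by boxes $\Omega^a_M=[-M,M]^2\cap a\mathbb{Z}^2$ and invoking the spatial mixing property (Lemma~\ref{lem::spatial}) to compare $\mathbb{P}^{a}_{\Omega^a_M}$ with $\mathbb{P}^{a}_{\mathbb{Z}^2}$ on events supported in a fixed compact region, with the comparison error going to zero as $M\to\infty$ uniformly in $a$. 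Uniqueness of the critical infinite-volume FK-Ising measure, which follows from~\cite{DuminilSidoraviciusTassionContinuityPhaseTransition}, makes this replacement unambiguous. Once this passage to the full plane is justified, the lemma follows essentially tautologically from the previously established Lemmas~\ref{lem::cvg_proba_aux1} and~\ref{lem::norma_1}; the two-sided bound $c\le W(\epsilon)\le 1$ coming from the RSW estimate already recorded after Lemma~\ref{lem::cvg_proba_aux1}, together with the fact that $V(\epsilon)\in(0,\infty)$, ensures that the limit $C$ lies strictly between $0$ and $\infty$.
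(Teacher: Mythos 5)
Your proposal is correct and follows essentially the same route as the paper: the paper's own proof simply says to ``proceed as in the proof of Lemma~\ref{lem::norma_1}'' for the full-plane two-point function, which amounts to exactly the combination of the coupling argument behind Lemma~\ref{lem::cvg_proba_aux1}, the spatial mixing property (Lemma~\ref{lem::spatial}), and the box approximation $\Box_M\uparrow\mathbb{C}$ that you describe. The only difference is cosmetic: you factor the ratio through an intermediate $\epsilon$-scale and invoke Lemmas~\ref{lem::cvg_proba_aux1} and~\ref{lem::norma_1} as black boxes with $\Omega=\mathbb{C}$, while the paper runs the Lemma~\ref{lem::norma_1} argument directly on $\mathbb{P}^a_{\mathbb{Z}^2}\left[y_1^a\longleftrightarrow y_2^a\right]$ and expresses the limit as $\lim_{m\to\infty}\mathbb{P}_{\mathbb{C}}\left[0\longleftrightarrow 1\,|\,\cdots\right]/\big(\mathbb{P}_{\mathbb{C}}\left[0\longleftrightarrow\partial B_1(0)\,|\,\cdots\right]\big)^2$, with the full-plane continuum quantities defined as $M\to\infty$ limits of $\Box_M$ quantities, exactly as in your handling of the infinite-volume measure.
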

\begin{proof}
	One can proceed as in the proof of Lemma~\ref{lem::norma_1} to show that
	\begin{align*}
		\lim_{a\to 0} \frac{\mathbb{P}_{\mathbb{Z}^2}^a\left[y_1^a\longleftrightarrow y_2^a\right]}{\Big(\mathbb{P}_{\mathbb{Z}^2}^a\left[0\longleftrightarrow \partial B_1(0)\right]\Big)^2}=&\lim_{m\to\infty} \frac{\mathbb{P}_{\mathbb{C}}\left[0\longleftrightarrow 1 | 0\longleftrightarrow \partial B_{\delta_m}(0) ,\,  1\longleftrightarrow \partial B_{\delta_m}(1)\right]}{\Big(\mathbb{P}_{\mathbb{C}}\left[0\longleftrightarrow \partial B_1(0)| 0\longleftrightarrow \partial B_{\delta_m}(0)\right]\Big)^2}=:C\in (0,\infty),
	\end{align*}
	where 
	\begin{align*}
	\frac{\mathbb{P}_{\mathbb{C}}\left[0\longleftrightarrow 1 | 0\longleftrightarrow \partial B_{\delta_m}(0) ,\,  1\longleftrightarrow \partial B_{\delta_m}(1)\right]}{\Big(\mathbb{P}_{\mathbb{C}}\left[0\longleftrightarrow \partial B_1(0)| 0\longleftrightarrow \partial B_{\delta_m}(0)\right]\Big)^2}:=\lim_{M\to \infty}\frac{\mathbb{P}_{\Box_M}\left[0\longleftrightarrow 1 | 0\longleftrightarrow \partial B_{\delta_m}(0) ,\,  1\longleftrightarrow \partial B_{\delta_m}(1)\right]}{\Big(\mathbb{P}_{\Box_M}\left[0\longleftrightarrow \partial B_1(0)| 0\longleftrightarrow \partial B_{\delta_m}(0)\right]\Big)^2}.
	\end{align*}
	This completes the proof. 
\end{proof}


With Lemmas~\ref{lem::cvg_proba_aux1}-\ref{lem::norma_2} and Corollary~\ref{coro::two_point_FK} at hand, the proof of part~\ref{item::thm_cvg_proba} of Theorem~\ref{thm::cvg_proba} is straightforward.
\begin{proof}[Proof of part~\ref{item::thm_cvg_proba} of Theorem~\ref{thm::cvg_proba}]
	Let $\epsilon\in (0,\frac{\min \{\min_{j\neq k}|z_j-z_k|,\min_{1\leq j\leq n}d(\Omega ,z_j)\}}{100}) $ and $y_1^a,y_2^a\in a\mathbb{Z}^2$ satisfy $\lim_{a\to 0} y_1^a=0$, $\lim_{a\to 0}y_2^a=1$. Write
	\begin{align*}
	a^{-\frac{n}{8}}\times \mathbb{P}^a_{\Omega}\left[G(\parti;z_1^a,\ldots,z_{n}^a)\right] =&\frac{\mathbb{P}^a_{\Omega}\left[G(\parti;z_1^a,\ldots,z_{n}^a)\right]}{\mathbb{P}^a_{\Omega}\left[z_j^a\longleftrightarrow \partial B_{\epsilon}(z_j^a), \enspace 1\leq j\leq n\right]}\times \frac{\mathbb{P}^a_{\Omega}\left[z_j^a\longleftrightarrow \partial B_{\epsilon}(z_j^a),\enspace 1\leq j\leq n\right]}{\Big(\mathbb{P}^a_{\mathbb{Z}^2}\left[0\longleftrightarrow \partial B_1(0)\right]\Big)^n}\\
	&\times \frac{\Big(\mathbb{P}^a_{\mathbb{Z}^2}\left[0\longleftrightarrow \partial B_1(0)\right]\Big)^n}{\Big(\mathbb{P}^a_{\mathbb{Z}^2}\left[z_1^a\longleftrightarrow z_2^a\right]\Big)^{\frac{n}{2}}} \times \Big(a^{-\frac{1}{4}}\times \mathbb{P}^a_{\mathbb{Z}^2}\left[y_1^a\longleftrightarrow y_2^a\right]\Big) ^{\frac{n}{2}}. 
	\end{align*}
	Then, as a consequence of Lemmas~\ref{lem::cvg_proba_aux1}-\ref{lem::norma_2} and Corollary~\ref{coro::two_point_FK}, we have 
	\begin{align*}
		\lim_{a\to 0} 	a^{-\frac{n}{8}}\times \mathbb{P}^a_{\Omega}\left[G(\parti;z_1^a,\ldots,z_{n}^a)\right]=&\lim_{m\to\infty} \frac{\mathbb{P}_{\Omega}\left[z_j\longleftrightarrow \partial B_{\epsilon}(z_j),\enspace 1\leq j\leq n | z_j\longleftrightarrow \partial B_{\delta_m}(z_j),\enspace 1\leq j\leq n\right]}{\Big(\mathbb{P}_{\mathbb{C}}\left[0\longleftrightarrow \partial B_{1}(0)| 0\longleftrightarrow \partial B_{\delta_m}(0)\right]\Big)^n}\\
		&\times	\big(C^{-1}C_8\big)^{\frac{n}{2}}\mathbb{P}_{\Omega}\left[G(\parti;z_1,\ldots,z_n) | z_j\longleftrightarrow \partial B_{\epsilon}(z_j),\enspace 1\leq j\leq n\right]
	 \in (0,\infty),
	\end{align*}
	where $C$ is the constant in Lemma~\ref{lem::norma_2} and $C_8$ is the constant in Theorem~\ref{thm::two_point_Ising} and Corollary~\ref{coro::two_point_FK}. This completes the proof. 
\end{proof}

\subsubsection{Proof of part~\ref{item::thm::cov} of Theorem~\ref{thm::cvg_proba}}
{\begin{lemma} \label{lem::scaling}
	For any $0<r<R$, we have 
	\begin{align*}
		\lim_{a\to 0 } \frac{\mathbb{P}^a_{\mathbb{Z}^2}\left[0\longleftrightarrow \partial B_{R}(0)\right]}{\mathbb{P}^a_{\mathbb{Z}^2}\left[0\longleftrightarrow \partial B_r(0)\right]} =\Big(\frac{r}{R}\Big)^{\frac{1}{8}}. 
	\end{align*}
\end{lemma}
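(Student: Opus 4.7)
The plan is to deduce this scaling identity from two inputs already established in the paper: Corollary~\ref{coro::two_point_FK} (Wu's result on the Ising/FK-Ising two-point function) and Lemma~\ref{lem::norma_2} (which identifies the two-point function in terms of the square of the one-arm probability at scale one). The argument has a simple structure: first pin down the asymptotic order of $\mathbb{P}^a_{\mathbb{Z}^2}[0\longleftrightarrow\partial B_1(0)]$ in $a$, then exploit the exact discrete scaling invariance of the full-plane model to transfer information between different radii.

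\textbf{Step 1: asymptotic order of the one-arm probability in the full plane.} Combining Lemma~\ref{lem::norma_2} with Corollary~\ref{coro::two_point_FK}, I will form the ratio
\begin{equation*}
\bigl(\mathbb{P}^a_{\mathbb{Z}^2}[0\longleftrightarrow\partial B_1(0)]\bigr)^2
= \frac{\mathbb{P}_{\mathbb{Z}^2}^a[y_1^a\longleftrightarrow y_2^a]}{\mathbb{P}_{\mathbb{Z}^2}^a[y_1^a\longleftrightarrow y_2^a]/\bigl(\mathbb{P}^a_{\mathbb{Z}^2}[0\longleftrightarrow\partial B_1(0)]\bigr)^2}
\end{equation*}
for $y_1^a\to 0$, $y_2^a\to 1$, and take $a\to 0$ to obtain
\begin{equation*}
\lim_{a\to 0} a^{-1/8}\,\mathbb{P}^a_{\mathbb{Z}^2}[0\longleftrightarrow\partial B_1(0)] = \sqrt{C_8/C} \;=:\; C^{\ast} \;\in\; (0,\infty),
\end{equation*}
where $C$ and $C_8$ are the constants appearing in Lemma~\ref{lem::norma_2} and Corollary~\ref{coro::two_point_FK}, respectively.

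\textbf{Step 2: exact discrete scaling identity and conclusion.} For any $\rho>0$, the bijection $z\mapsto z/\rho$ carries $a\mathbb{Z}^2$ onto $(a/\rho)\mathbb{Z}^2$ and the ball $B_\rho(0)$ onto $B_1(0)$, preserving edge-weights and cluster-weights of the critical FK-Ising model. Since the origin is a vertex of both lattices, this yields the identity
\begin{equation*}
\mathbb{P}^a_{\mathbb{Z}^2}[0\longleftrightarrow\partial B_\rho(0)] \;=\; \mathbb{P}^{a/\rho}_{\mathbb{Z}^2}[0\longleftrightarrow\partial B_1(0)]
\end{equation*}
for every fixed $\rho>0$ and every $a>0$. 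Applying this with $\rho=R$ and $\rho=r$ and using Step~1 on each factor,
\begin{equation*}
\frac{\mathbb{P}^a_{\mathbb{Z}^2}[0\longleftrightarrow\partial B_R(0)]}{\mathbb{P}^a_{\mathbb{Z}^2}[0\longleftrightarrow\partial B_r(0)]}
= \frac{\mathbb{P}^{a/R}_{\mathbb{Z}^2}[0\longleftrightarrow\partial B_1(0)]}{\mathbb{P}^{a/r}_{\mathbb{Z}^2}[0\longleftrightarrow\partial B_1(0)]}
\;\xrightarrow[a\to 0]{}\; \frac{C^{\ast}(a/R)^{1/8}/C^{\ast}(a/r)^{1/8}}{} \cdot \frac{C^{\ast}}{C^{\ast}} = \left(\frac{r}{R}\right)^{1/8},
\end{equation*}
which is the claim. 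Since both Step~1 and the scaling identity are essentially immediate given the earlier results, there is no real obstacle here; the only point requiring care is that the scaling identity holds exactly at finite $a$ (because the origin lies on both $a\mathbb{Z}^2$ and $(a/\rho)\mathbb{Z}^2$), so that no additional error terms need to be controlled when passing to the limit.
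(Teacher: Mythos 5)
Your proposal is correct, but it takes a genuinely different route from the paper. The paper proves Lemma~\ref{lem::scaling} without ever identifying the absolute order of the one-arm probability: it first shows (by the argument of Lemma~\ref{lem::cvg_proba_aux1}) that the ratio converges to a continuum quantity $\tau=\lim_m \mathbb{P}_{\mathbb{C}}[\partial B_r\longleftrightarrow\partial B_{\delta_m}]/\mathbb{P}_{\mathbb{C}}[\partial B_1\longleftrightarrow\partial B_{\delta_m}]$, and then pins down $\tau=r^{-1/8}$ by combining the $\mathrm{CLE}$ one-arm estimate $\mathbb{P}_{B_1}[\partial B_1\longleftrightarrow\partial B_\delta]\asymp\delta^{1/8}$ from~\cite{SchrammSheffieldWilsonConformalRadii} with spatial mixing, the scale invariance of $\mathbb{P}_{\mathbb{C}}$, and a telescoping/Ces\`aro argument. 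You instead extract the sharp asymptotics $\lim_{a\to 0}a^{-1/8}\mathbb{P}^a_{\mathbb{Z}^2}[0\longleftrightarrow\partial B_1(0)]=\sqrt{C_8/C}$ from Corollary~\ref{coro::two_point_FK} and Lemma~\ref{lem::norma_2}, and then conclude via the exact lattice identity $\mathbb{P}^a_{\mathbb{Z}^2}[0\longleftrightarrow\partial B_\rho(0)]=\mathbb{P}^{a/\rho}_{\mathbb{Z}^2}[0\longleftrightarrow\partial B_1(0)]$; both inputs are established independently of Lemma~\ref{lem::scaling} (the proof of Lemma~\ref{lem::norma_2} only relies on the arguments of Lemmas~\ref{lem::cvg_proba_aux1} and~\ref{lem::norma_1} and the spatial mixing property), so there is no circularity, and the scaling identity is indeed exact at finite mesh. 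What each approach buys: yours is shorter, avoids the $\mathrm{CLE}$ one-arm exponent, and in passing yields the stronger sharp one-arm asymptotics; the paper's version keeps Lemma~\ref{lem::scaling} independent of Wu's two-point result, which is deliberate — the remark after Theorem~\ref{thm::cvg_proba} states that, without Theorem~\ref{thm::two_point_Ising}, Theorem~\ref{thm::cvg_proba} can still be proved (with normalization $\mathbb{P}^a_{\mathbb{Z}^2}[0\longleftrightarrow\partial B_1(0)]$) using Lemmas~\ref{lem::cvg_proba_aux1},~\ref{lem::norma_1} and~\ref{lem::scaling}, a fallback that would be lost under your proof, and the $\mathrm{CLE}$-based proof is also the one that would transfer to other $q\in[1,4]$ (given convergence to $\mathrm{CLE}_\kappa$), where no Wu-type input is available. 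Two cosmetic points: your final display is typographically garbled (an empty denominator), though the intended computation $\mathbb{P}^{a/R}_{\mathbb{Z}^2}[0\longleftrightarrow\partial B_1(0)]/\mathbb{P}^{a/r}_{\mathbb{Z}^2}[0\longleftrightarrow\partial B_1(0)]\to (r/R)^{1/8}$ is clear, and you should state explicitly that the limit in Step~1 holds as the mesh tends to $0$ through arbitrary values, so it applies along both $a/R$ and $a/r$.
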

\begin{proof}
Throughout this proof, we write $B_r$ for $B_r(0)$. 	It suffices to show that, for any $r>0$, we have 
	\begin{align} \label{eqn::scaling_aux0}
	\lim_{a\to 0 } \frac{\mathbb{P}^a_{\mathbb{Z}^2}\left[0\longleftrightarrow \partial B_{r}\right]}{\mathbb{P}^a_{\mathbb{Z}^2}\left[0\longleftrightarrow \partial B_1\right]} =r^{-\frac{1}{8}}. 
	\end{align}
	Without loss of generality, we may assume that $r\in (0,1)$. To simplify the notation, we write $f \asymp g$ if $f/g$ is bounded by a finite constant from above and so does $g/f$.
	
	On the one hand, according to~\cite[Proof of Theorem~2]{SchrammSheffieldWilsonConformalRadii} (the first displayed equation in the proof), 
	\begin{align*}
		\mathbb{P}_{B_1} \left[\partial B_{1}\longleftrightarrow \partial B_{\delta}\right] \asymp \delta^{\frac{1}{8}},\quad \text{as }\delta\to 0.
	\end{align*}
	Then, a direct application of RSW arguments and the FKG inequality (see, e.g., the proofs of Lemmas 2.1 and 2.2 of~\cite{CamiaNewman2009ising}), combined with  the spatial mixing property in Lemma~\ref{lem::spatial}, leads to
	\begin{align} \label{eqn::scaling_aux1}
		\mathbb{P}_{\mathbb{C}}[\partial B_1\longleftrightarrow \partial B_{\delta}]:=\lim_{M\to \infty} \mathbb{P}_{\Box_M}\left[\partial B_1\longleftrightarrow \partial B_{\delta}\right] \asymp \mathbb{P}_{B_1} \left[\partial B_1\longleftrightarrow \partial B_{\delta}\right]\asymp \delta^{\frac{1}{8}},
	\end{align}
	as $\delta\to 0$. 
	
	On the other hand, one can proceed as in the proof of Lemma~\ref{lem::cvg_proba_aux1} to show that 
	\begin{align} \label{eqn::scaling_aux2}
	\lim_{a\to 0 } \frac{\mathbb{P}^a_{\mathbb{Z}^2}\left[0\longleftrightarrow \partial B_{r}\right]}{\mathbb{P}^a_{\mathbb{Z}^2}\left[0\longleftrightarrow \partial B_1\right]} = \lim_{m\to \infty} \frac{\mathbb{P}_{\mathbb{C}}[\partial B_r\longleftrightarrow \partial B_{\delta_m}]}{\mathbb{P}_{\mathbb{C}}\left[\partial B_{1}\longleftrightarrow \partial B_{\delta_m}\right]}.
	\end{align}
	We denote by $\tau$ the quantity in~\eqref{eqn::scaling_aux2}. Since $r\in (0,1)$, we have $\lim_{m\to \infty} r^m=0$. Using the scale invariance of $\mathbb{P}_{\mathbb{C}}$, we can write
	\begin{align*}
		\mathbb{P}_{\mathbb{C}}\left[\partial B_1\longleftrightarrow \partial B_{r^m}\right]= &\frac{	\mathbb{P}_{\mathbb{C}}\left[\partial B_1\longleftrightarrow \partial B_{r^m}\right]}{	\mathbb{P}_{\mathbb{C}}\left[\partial B_1\longleftrightarrow \partial B_{r^{m-1}}\right]}\cdot \frac{	\mathbb{P}_{\mathbb{C}}\left[\partial B_1\longleftrightarrow \partial B_{r^{m-1}}\right]}{	\mathbb{P}_{\mathbb{C}}\left[\partial B_1\longleftrightarrow \partial B_{r^{m-2}}\right]}\cdots \frac{\mathbb{P}_{\mathbb{C}}[\partial B_1\longleftrightarrow \partial B_r]}{1}\\
		=& \frac{\mathbb{P}_{\mathbb{C}}\left[\partial B_1\longleftrightarrow \partial B_{r^m}\right]}{\mathbb{P}_{\mathbb{C}}\left[\partial B_r\longleftrightarrow \partial B_{r^m}\right]}\cdot \frac{\mathbb{P}_{\mathbb{C}}\left[\partial B_1\longleftrightarrow \partial B_{r^{m-1}}\right]}{\mathbb{P}_{\mathbb{C}}\left[\partial B_r\longleftrightarrow \partial B_{r^{m-1}}\right]}\cdots \frac{\mathbb{P}_{\mathbb{C}}\left[\partial B_1\longleftrightarrow \partial B_{r}\right]}{\mathbb{P}_{\mathbb{C}}\left[\partial B_r\longleftrightarrow \partial B_{r}\right]}.
	\end{align*}
 Using~\eqref{eqn::scaling_aux2} and the convergence of the Ces\`aro mean gives 
	\begin{align} \label{eqn::scaling_aux3}
	\lim_{m\to \infty} \frac{1}{m}\log \mathbb{P}_{\mathbb{C}} [\partial B_1\longleftrightarrow \partial B_{r^m}] =-\log \tau.
	\end{align}
	
	Combing~\eqref{eqn::scaling_aux1} with~\eqref{eqn::scaling_aux3} gives $\tau =r^{-\frac{1}{8}}$. This yields~\eqref{eqn::scaling_aux0} and completes the proof. 
\end{proof}} }

\begin{proof}[Proof of part~\ref{item::thm::cov} of Theorem~\ref{thm::cvg_proba}]
According to Lemmas~\ref{lem::cvg_proba_aux1} and~\ref{lem::norma_1}, for small enough $\epsilon>0$, 
	\begin{align*}
		\hat{P}(\Omega;\bs{Q};z_1,\ldots, z_n):=&\lim_{a\to 0} \Big(\mathbb{P}^a_{\mathbb{Z}^2}\left[0\longleftarrow \partial B_1(0)\right]\Big)^{-n}\times \mathbb{P}^a_{\Omega}\left[G(\mathbf{Q};z_1^a,\ldots,z_n^a)\right]\\
		=&\lim_{m\to \infty}\frac{\mathbb{P}_{\Omega}\left[z_j\longleftrightarrow \partial B_{\epsilon}(z_j),\enspace 1\leq j\leq n | z_j\longleftrightarrow \partial B_{\delta_m}(z_j),\enspace 1\leq j\leq n\right]}{\Big(\mathbb{P}_{\mathbb{C}}\left[0\longleftrightarrow \partial B_{1}(0)| 0\longleftrightarrow \partial B_{\delta_m}(0)\right]\Big)^n}\\
		&\times	\mathbb{P}_{\Omega}\left[G(\parti;z_1,\ldots,z_n) | z_j\longleftrightarrow \partial B_{\epsilon}(z_j),\enspace 1\leq j\leq n\right] \in (0,\infty). 
	\end{align*}
Thanks to Corollary~\ref{coro::two_point_FK} and Lemma~\ref{lem::norma_2},
\begin{align*}
    P(\Omega;\bs{Q};z_1,\ldots,z_n)=\left(C^{-1}C_8\right)^{\frac{n}{2}} \hat{P}(\Omega;\bs{Q};z_1,\ldots,z_n),
\end{align*}
where $C$ is the constant in Lemma~\ref{lem::norma_2} and $C_8$ is the constant in Theorem~\ref{thm::two_point_Ising} and Corollary~\ref{coro::two_point_FK}.
 Therefore, it suffices to show that the function $\hat{P}$ satisfies the conformal covariance property expressed by~\eqref{eqn::conformal_covariance}.	
	
	Let $\varphi$ be a conformal map from $\Omega$ onto some $\Omega'$. Let $1\leq j\leq n$. Write $s_j=|\varphi'(z_j)|$ and let $B_{R_j(\epsilon)}(z_j)\setminus B_{r_j(\epsilon)}(z_j)$ be the thinnest annulus that contains the symmetric difference\footnote{If $\varphi^{-1}\big(B_{\epsilon}(\varphi(z_j))\big)=B_{\epsilon/s_j}(z_j)$, we then let $R_j(\epsilon)=r_j(\epsilon)=\epsilon/s_j$.} of $\varphi^{-1}\left(B_{\epsilon}(\varphi(z_j))\right)$ and $B_{\epsilon/s_j}(z_j)$. Then we have 
	\begin{equation} \label{eqn::cov_aux0}
		\lim_{\epsilon\to 0}\frac{r_j(\epsilon)}{\epsilon}=\lim_{\epsilon\to 0}\frac{R_j(\epsilon)}{\epsilon}=\frac{1}{s_j}.
	\end{equation}
Note that
	\begin{align} \label{eqn::cov_aux1}
\begin{split}
		&\qquad\hat{P}(\Omega'; \bs{Q};\varphi(z_1),\ldots,\varphi(z_n))\\
  =&\lim_{m\to \infty}\frac{\mathbb{P}_{\Omega'}\left[\varphi(z_j)\longleftrightarrow \partial B_{\epsilon}(\varphi(z_j)),\enspace 1\leq j\leq n | \varphi(z_j)\longleftrightarrow \partial B_{\delta_m}(\varphi(z_j)),\enspace 1\leq j\leq n\right]}{\Big(\mathbb{P}_{\mathbb{C}}\left[0\longleftrightarrow \partial B_{1}(0)| 0\longleftrightarrow \partial B_{\delta_m}(0)\right]\Big)^n}\\
	&\times	\mathbb{P}_{\Omega'}\left[G(\parti;\varphi(z_1),\ldots,\varphi(z_n)) | \varphi(z_j)\longleftrightarrow \partial B_{\epsilon}(\varphi(z_j)),\enspace 1\leq j\leq n\right] \\
=&\lim_{m\to \infty}\frac{\mathbb{P}_{\Omega}\left[z_j\longleftrightarrow \partial \varphi^{-1}\big(B_{\epsilon}(\varphi(z_j))\big),\enspace 1\leq j\leq n | z_j\longleftrightarrow \partial \varphi^{-1}\big(B_{\delta_m}(\varphi(z_j))\big),\enspace 1\leq j\leq n\right]}{\Big(\mathbb{P}_{\mathbb{C}}\left[0\longleftrightarrow \partial B_{1}(0)| 0\longleftrightarrow \partial B_{\delta_m}(0)\right]\Big)^n}\\
&\times \mathbb{P}_{\Omega}\left[G(\parti;z_1,\ldots,z_n) | z_j\longleftrightarrow \partial \varphi^{-1}\big(B_{\epsilon}(\varphi(z_j))\big)\enspace 1\leq j\leq n\right]\\
=&\underbrace{\lim_{m\to \infty}\frac{\mathbb{P}_{\Omega}\left[z_j\longleftrightarrow \partial \varphi^{-1}\big(B_{\epsilon}(\varphi(z_j))\big),\enspace 1\leq j\leq n | z_j\longleftrightarrow \partial \varphi^{-1}\big(B_{\delta_m}(\varphi(z_j))\big),\enspace 1\leq j\leq n\right]}{\prod_{j=1}^n\mathbb{P}_{\mathbb{C}}\left[z_j\longleftrightarrow \partial B_{1}(z_j)| z_j\longleftrightarrow \partial \varphi^{-1}\big(B_{\delta_m}(\varphi(z_j))\big)\right]}}_{T_1}\\
&\times \underbrace{\mathbb{P}_{\Omega}\left[G(\parti;z_1,\ldots,z_n) | z_j\longleftrightarrow \partial \varphi^{-1}\big(B_{\epsilon}(\varphi(z_j))\big)\enspace 1\leq j\leq n\right]}_{T_2}\\
&\times\underbrace{\lim_{m\to\infty}\frac{\prod_{j=1}^n\mathbb{P}_{\mathbb{C}}\left[z_j\longleftrightarrow \partial B_{1}(z_j)| z_j\longleftrightarrow \partial \varphi^{-1}\big(B_{\delta_m}(\varphi(z_j))\big)\right]}{\Big(\mathbb{P}_{\mathbb{C}}\left[0\longleftrightarrow \partial B_{1}(0)| 0\longleftrightarrow \partial B_{\delta_m}(0)\right]\Big)^n}}_{T_3},
\end{split}
	\end{align}
	where we used the conformal invariance of $\mathbb{P}_{\Omega}$ (Theorem~\ref{thm::CLE}) to get the second equality.
	
We treat the terms $T_1$-$T_3$ one by one. For the term $T_1$, according to Lemma~\ref{lem::norma_1} and its proof, we have
\begin{align*}
	T_1=&\lim_{a\to 0} \frac{\mathbb{P}^a_{\Omega}\left[z_j^a\longleftrightarrow \partial \varphi^{-1}\big(B_{\epsilon}(\varphi(z_j^a))\big),\enspace 1\leq j\leq n\right]}{\Big(\mathbb{P}^a_{\mathbb{Z}^2}\left[0\longleftrightarrow \partial B_1(0)\right]\Big)^n}\\
	\geq & \lim_{a\to 0}\frac{\mathbb{P}^a_{\Omega}\left[z_j^a\longleftrightarrow \partial B_{R_j(\epsilon)}(z_j^a),\enspace 1\leq j\leq n\right]}{\Big(\mathbb{P}^a_{\mathbb{Z}^2}\left[0\longleftrightarrow \partial B_1(0)\right]\Big)^n}\\
	=& \lim_{a\to 0}\frac{\mathbb{P}^a_{\Omega}\left[z_j^a\longleftrightarrow \partial B_{r_j(\epsilon)}(z_j^a),\enspace 1\leq j\leq n\right]}{\Big(\mathbb{P}^a_{\mathbb{Z}^2}\left[0\longleftrightarrow \partial B_1(0)\right]\Big)^n} \times  \frac{\mathbb{P}^a_{\Omega}\left[z_j^a\longleftrightarrow \partial B_{R_j(\epsilon)}(z_j^a),\enspace 1\leq j\leq n\right]}{\mathbb{P}^a_{\Omega}\left[z_j^a\longleftrightarrow \partial B_{r_j(\epsilon)}(z_j^a),\enspace 1\leq j\leq n\right]}\\
	=& \lim_{m\to \infty}\frac{\mathbb{P}_{\Omega}\left[z_j\longleftrightarrow \partial B_{r_j(\epsilon)}(z_j),\enspace 1\leq j\leq n | z_j\longleftrightarrow \partial \varphi^{-1}\big(B_{\delta_m}(\varphi(z_j))\big),\enspace 1\leq j\leq n\right]}{\prod_{j=1}^n\mathbb{P}_{\mathbb{C}}\left[z_j\longleftrightarrow \partial B_{1}(z_j)| z_j\longleftrightarrow \partial \varphi^{-1}\big(B_{\delta_m}(\varphi(z_j))\big)\right]}\\
	&\times \underbrace{\lim_{a\to 0} \frac{\mathbb{P}^a_{\Omega}\left[z_j^a\longleftrightarrow \partial B_{R_j(\epsilon)}(z_j^a),\enspace 1\leq j\leq n\right]}{\mathbb{P}^a_{\Omega}\left[z_j^a\longleftrightarrow \partial B_{r_j(\epsilon)}(z_j^a),\enspace 1\leq j\leq n\right]}}_{T_4}.
\end{align*}
For the new term $T_4$, note that 
\begin{align*}
    \lim_{\epsilon\to 0}T_4=\lim_{\epsilon\to 0}\lim_{a\to 0}\frac{\prod_{j=1}^n\mathbb{P}^a_{\mathbb{Z}^2}\left[z_j^a\longleftrightarrow \partial B_{R_j(\epsilon)}(z_j^a)\right]}{\prod_{j=1}^n\mathbb{P}^a_{\mathbb{Z}^2}\left[z_j^a\longleftrightarrow \partial B_{r_j(\epsilon)}(z_j^a)\right]}=\lim_{\epsilon\to 0}  \prod_{j=1}^n\left(\frac{r_j(\epsilon)}{R_j(\epsilon)}\right)^{\frac{1}{8}}=1,
\end{align*}
where we used the spatial mixing property in Lemma~\ref{lem::spatial} to get the first equality, Lemma~\ref{lem::scaling} to get the second equality and~\eqref{eqn::cov_aux0} to get the last equality. 
Similarly, for the term $T_2$, the proof of Lemma~\ref{lem::norma_1} can be used to show that 
\begin{align*}
T_2=& \lim_{a\to 0} \frac{\mathbb{P}^a_{\Omega}\left[G(\bs{Q};z_1^a,\ldots,z_n^a)\right]}{\mathbb{P}^a_{\Omega}\left[z_j^a\longleftrightarrow \partial\varphi^{-1}\big(B_{\delta_m}(\varphi(z_j^a))\big)\right]}\\
\geq& \lim_{a\to0} \frac{\mathbb{P}^a_{\Omega}\left[G(\bs{Q};z_1^a,\ldots,z_n^a)\right]}{\mathbb{P}^a_{\Omega}\left[z_j^a\longleftrightarrow \partial B_{r_j(\epsilon)}(z_j^a),\enspace 1\leq j\leq n\right]}=\mathbb{P}_{\Omega}\left[G(\bs{Q};z_1,\ldots,z_n)| z_j\longleftrightarrow \partial B_{r_j(\epsilon)}(z_j),\enspace 1\leq j\leq n\right].
\end{align*}
The proof of Lemma~\ref{lem::cvg_proba_aux1} can be used to show that, when $m$ is large enough,
\begin{align*}
\mathbb{P}_{\mathbb{C}}\left[z_j\longleftrightarrow \partial B_1(z_j)|z_j\longleftrightarrow \partial \varphi^{-1} \big(B_{\delta_m}(\varphi(z_j))\big)\right]=&\lim_{a\to 0} \frac{\mathbb{P}^a_{\mathbb{Z^2}}\left[z_j^a\longleftrightarrow \partial B_1(z_j^a)\right]}{\mathbb{P}^a_{\mathbb{Z^2}}\left[z_j^a\longleftrightarrow \partial \varphi^{-1}\big(B_{\delta_m}(\varphi(z_j^a))\big)\right]},\quad \text{for }1\leq j\leq n,\\
\mathbb{P}_{\mathbb{C}}\left[0\longleftrightarrow \partial B_1(0)| 0\longleftrightarrow \partial B_{\delta_m}(0)\right]=&\lim_{a\to 0} \frac{\mathbb{P}^a_{\mathbb{Z}^2}\left[0\longleftrightarrow \partial B_1(0)\right]}{\mathbb{P}^a_{\mathbb{Z}^2}\left[0\longleftrightarrow \partial B_{\delta_m}(0)\right]}.
\end{align*}
Consequently, for the term $T_3$, according to the proof of Lemma~\ref{lem::cvg_proba_aux1}, 
\begin{align*}
\lim_{m\to \infty}\lim_{a\to 0} \prod_{j=1}^n\frac{\mathbb{P}^a_{\mathbb{Z}^2}\left[z^a_j\longleftrightarrow \partial B_{\delta_m}(z^a_j)\right]}{\mathbb{P}^a_{\mathbb{Z}^2} \left[z^a_j\longleftrightarrow \partial B_{r_j(\delta_m)}(z^a_j)\right]}
\leq & \; T_3=\lim_{m\to \infty}\lim_{a\to 0} \prod_{j=1}^n\frac{\mathbb{P}^a_{\mathbb{Z}^2}\left[z^a_j\longleftrightarrow \partial B_{\delta_m}(z^a_j)\right]}{\mathbb{P}^a_{\mathbb{Z}^2} \left[z^a_j\longleftrightarrow \partial \varphi^{-1}\big(B_{\delta_m}(\varphi(z_j))\big)\right]}\\
\leq &\lim_{m\to \infty}\lim_{a\to 0} \prod_{j=1}^n\frac{\mathbb{P}^a_{\mathbb{Z}^2}\left[z^a_j\longleftrightarrow \partial B_{\delta_m}(z^a_j)\right]}{\mathbb{P}^a_{\mathbb{Z}^2} \left[z^a_j\longleftrightarrow \partial B_{R_j(\delta_m)}(z^a_j)\right]}.
\end{align*}
Combining this with Lemma~\ref{lem::scaling} and~\eqref{eqn::cov_aux0} gives 
\begin{equation*}
	T_3= \prod_{j=1}^n \vert\varphi'(z_j)\vert^{-\frac{1}{8}}.
\end{equation*}

Plugging these observations into~\eqref{eqn::cov_aux1} gives 
\begin{align} \label{eqn::cov_aux2}
\begin{split}
		\hat{P}(\Omega';\bs{Q};\varphi(z_1),\ldots,\varphi(z_n))\geq\prod_{j=1}^n \vert\varphi'(z_j)\vert^{-\frac{1}{8}} \times \hat{P}(\Omega;\bs{Q};z_1,\ldots, z_n). 
\end{split}
\end{align}
Similarly, one can show that 
\begin{align} \label{eqn::cov_aux3}
	\hat{P}(\Omega';\bs{Q};\varphi(z_1),\ldots,\varphi(z_n))\leq \prod_{j=1}^n \vert\varphi'(z_j)\vert^{-\frac{1}{8}} \times \hat{P}(\Omega;\bs{Q};z_1,\ldots, z_n). 
\end{align}
Combining~\eqref{eqn::cov_aux2} with~\eqref{eqn::cov_aux3} gives the desired conformal covariance property of the function $\hat{P}$ and completes the proof. 
\end{proof}

\subsection{Proof of Theorem~\ref{thm::onepoint_boundary}}
Now we consider the FK-Ising model on $\Omega^a$ with wired boundary conditions, whose measure is denoted by $\overline{\mathbb{P}}^a_{\Omega}$. 
\begin{proof}[Proof of Theorem~\ref{thm::onepoint_boundary}]
First, one can proceed as in the proof of part~\ref{item::thm_cvg_proba} of Theorem~\ref{thm::cvg_proba} to show that, for small enough $\epsilon>0$, 
\begin{align*}
g(\Omega;z) =& \lim_{a\to 0}a^{-\frac{1}{8}}\times \overline{\mathbb{P}}^a_{\Omega}\left[z^a
\longleftrightarrow \partial \Omega^a\right]\\
 = &\big(C^{-1}C_8\big)^{\frac{1}{2}}\lim_{m\to\infty} \frac{\overline{\mathbb{P}}_{\Omega}\left[z\longleftrightarrow \partial B_{\epsilon}(z) | z\longleftrightarrow \partial B_{\delta_m}(z)\right]}{\mathbb{P}_{\mathbb{C}}\left[0\longleftrightarrow \partial B_{1}(0)| 0\longleftrightarrow \partial B_{\delta_m}(0)\right]}\times	\overline{\mathbb{P}}_{\Omega}\left[z\longleftrightarrow \partial \Omega| z\longleftrightarrow \partial B_{\epsilon}(z)\right]
\in (0,\infty),
	\end{align*}
where  $C$ is the constant in Lemma~\ref{lem::norma_2},  $C_8$ is the constant in Theorem~\ref{thm::two_point_Ising},
\begin{align*}
		\overline{\mathbb{P}}_{\Omega}\left[z\longleftrightarrow \partial B_{\epsilon}(z)| z\longleftrightarrow \partial B_{\delta_m}(z)\right]:=&\lim_{ k\to \infty}\lim_{\eta\to 0} \overline{\mathbb{P}}\left[B_{\delta_k}(z)\longleftrightarrow \partial B_{\epsilon}(z) | \mathcal{A}_{\eta,\delta_m}(z)\right],
\end{align*}
and
\begin{align*}
	\overline{\mathbb{P}}_{\Omega}\left[z\longleftrightarrow \partial \Omega| z\longleftrightarrow \partial B_{\epsilon}(z)\right]:=&\lim_{ k\to \infty}\lim_{\eta\to 0} \overline{\mathbb{P}}\left[B_{\delta_k}(z)\longleftrightarrow \partial \Omega | \mathcal{A}_{\eta,\epsilon}(z)\right].
\end{align*}

Second, one can proceed as in the proof of part~\ref{item::thm::cov} of Theorem~\ref{thm::cvg_proba} to show that, for any conformal map $\varphi:\Omega\to \Omega'$, one has
\begin{equation*}
	g(\Omega';\varphi(z))=g(\Omega;z)\times |\varphi'(z)|^{-\frac{1}{8}}.
\end{equation*}
This conformal covariance property ensures that there exists a constant $C_3\in (0,\infty)$ such that 
\begin{equation*}
	g(\Omega;z)=C_3\mathrm{rad}(z,\Omega)^{-\frac{1}{8}}.
\end{equation*}
\end{proof}

\section{Connection probabilities involving boundary vertices}  \label{sec::con::bou}
We will sketch the proof Theorem~\ref{thm::main_boundary} {for two particular cases}: first, we will treat Theorem~\ref{thm::main_boundary} for $n+\ell\geq 2$ and $\parti=\parti_{n+\ell}=(\{1,2,\ldots,n+\ell\})$, that is, all vertices belong to the same cluster; second, we will treat Theorem~\ref{thm::main_boundary} for $n=\ell=2$ and $\parti=(\{1,2\},\{3,4\})$. All other cases can be treated similarly. 

\begin{proof}[Proof of Theorem~\ref{thm::main_boundary} for $\parti=\parti_{n+\ell}$]
First, we have to show the existence of nontrivial scaling limits. Write
\begin{align*}
	&a^{-\frac{n}{8}-\frac{\ell}{2}}\times \mathbb{P}_{\mathbb{H}}^a \left[z_1^a\longleftrightarrow \cdots\longleftrightarrow z_n^a\longleftrightarrow x_1^a \longleftrightarrow \cdots \longleftrightarrow x_{\ell}^a\right] \\
	=& \underbrace{\left(a^{-\frac{n}{8}}\times \left(\mathbb{P}_{\mathbb{Z}^2}^a\left[y_1^a\longleftrightarrow y_2^a\right]\right)^{\frac{n}{2}}\right)}_{ T^a_1} \times \underbrace{\left(a^{-\frac{\ell}{2}}\times \left(\mathbb{P}_*^a\left[u_3^a\longleftrightarrow (u_1^a u_2^a)\right]\right)^{\ell}\right)}_{T^a_2}\\
&\times \underbrace{\frac{\Big(\mathbb{P}^a_{\mathbb{Z}^2}\left[0\longleftrightarrow \partial B_1(0)\right]\Big)^n}{\Big(\mathbb{P}^a_{\mathbb{Z}^2}\left[y_1^a\longleftrightarrow y_2^a\right]\Big)^{\frac{n}{2}}}}_{T_3^a}\times \underbrace{\frac{\Big(\mathbb{P}^a_{\mathbb{H}}\left[0\longleftrightarrow \partial B_1(0)\right]\Big)^{\ell}}{\Big(\mathbb{P}^a_*\left[u_3^a\longleftrightarrow (u_1^au_2^a)\right]\Big)^{\ell}}}_{T_4^a}\times \underbrace{\frac{\mathbb{P}_{\mathbb{H}}^a \left[z_1^a\longleftrightarrow \cdots\longleftrightarrow z_n^a\longleftrightarrow x_1^a \longleftrightarrow \cdots \longleftrightarrow x_{\ell}^a\right]}{\left(\mathbb{P}_{\mathbb{Z}^2}^a\left[o\longleftrightarrow \partial B_1(0)\right]\right)^{n}\times \left(\mathbb{P}_{\mathbb{H}}^a\left[0\longleftrightarrow \partial B_1(0)\right]\right)^{\ell}}}_{T^a_5}.
\end{align*}
According to Corollary~\ref{coro::two_point_FK} and Lemma~\ref{lem::norma_2}, if $y^a_1 \to 0$ and $y^a_2 \to 1$, we have 
	\begin{equation*}
		\lim_{a\to 0}T^a_1=C_8^{\frac{n}{2}},\quad \lim_{a\to 0} T_3^a=C^{-\frac{n}{2}}
	\end{equation*}
	where $C_8$ is the constant in Theorem~\ref{thm::two_point_Ising} and $C$ is the constant in Lemma~\ref{lem::norma_2}.
Moreover, thanks to Proposition~\ref{prop::one_arm_bd}, we have
\begin{equation*}
	\lim_{a\to 0} T^a_2=C_9^{\ell},
\end{equation*}
where $C_9$ is the constant in Proposition~\ref{prop::one_arm_bd}.
For the term $T_4^a$, one can proceed as in the proof of Lemma~\ref{lem::norma_1} to show that
	\begin{equation*}
		\lim_{a\to 0}T_4^a=\hat{C}^{\ell}
	\end{equation*}
	for some constant $\hat{C}\in (0,\infty)$.

It remains to treat the term $T_5^a$. We write $\mathbf{N}=\{1,2,\ldots,n\}$ and $\mathbf{L}=\{1,2,\ldots,\ell\}$. One then can proceed as in the proof of Lemma~\ref{lem::cvg_proba_aux1} to show that, for small enough $\epsilon>0$,
\begin{align*}
	&\lim_{a\to 0}T_5^a\\
	&=\mathbb{P}_{\mathbb{H}}\left[ z_1\longleftrightarrow \cdots \longleftrightarrow z_n\longleftrightarrow x_1\longleftrightarrow \cdots \longleftrightarrow x_{\ell}| z_j\longleftrightarrow \partial B_{\epsilon}(z_j), x_k\longleftrightarrow \partial B_{\epsilon}(x_k),\enspace (j,k)\in \mathbf{N}\times \mathbf{L}\right]\\
&\times \lim_{m\to \infty}\frac{ \mathbb{P}_{\mathbb{H}}\left[z_j\longleftrightarrow \partial B_{\epsilon}(z_j), x_k\longleftrightarrow \partial B_{\epsilon}(x_k),\, (j,k)\in \mathbf{N}\times \mathbf{L}| z_j\longleftrightarrow \partial B_{\delta_m}(z_j), x_k\longleftrightarrow \partial B_{\delta_m}(x_k),\, (j,k)\in \mathbf{N}\times \mathbf{L}\right]}{\Big(\mathbb{P}_{\mathbb{C}}\left[0\longleftrightarrow \partial B_1(0) | 0\longleftrightarrow \partial B_{\delta_m}(0) \right]\Big)^{n}\times \Big(\mathbb{P}_{\mathbb{H}}\left[0\longleftrightarrow \partial B_1(0)| 0\longleftrightarrow \partial B_{\delta_m}(0)\right]\Big)^{\ell}},
\end{align*}
where the conditional probabilities can be defined as in the proof of Lemma~\ref{lem::cvg_proba_aux1} and the proof of Lemma~\ref{lem::norma_1}. Combining all of these observations, one derives the existence of the limit. 

Second, one can proceed as in the proof of part (2) of Theorem~\ref{thm::cvg_proba} to get the desired conformal covariance property of the limiting function $R(\parti;z_1,\ldots,z_n;x_1,\ldots,x_{\ell})$, with the additional help of~\eqref{eqn::scaling_bd}, which replaces Lemma~\ref{lem::scaling} for the $\ell$ boundary points.

Third, thanks to the conformal covariance property, the explicit expressions for $R(\bs{Q}_n;x_1,\ldots,x_n)$ with $n=2,3$ are almost immediate.

Now, let us derive the explicit expression for $R(\parti_2;z;0)$. Define
\begin{equation*}
	f(z):=\frac{|z-\overline{z}|^{\frac{3}{8}}}{|z|}.
\end{equation*}
A simple calculation shows that, for any M\"obius transformation $\varphi: \mathbb{H} \to \mathbb{H}$ with $\varphi(0)=0$, one has 
\begin{equation} \label{eqn::cov_f}
f\left(\varphi(z)\right)=f(z)\times |\varphi'(z)|^{-\frac{1}{8}}\times |\varphi'(0)|^{-\frac{1}{2}}.
\end{equation}
Combining~\eqref{eqn::cov_f} with the conformal covariance property of $R(\parti_2;z;0)$, we conclude that for any M\"obius transformation $\varphi:\mathbb{H}\to \mathbb{H}$ with $\varphi(0)=0$, one has 
\begin{equation*}
	\frac{R(\parti_2;z;0)}{f(z)}=\frac{R(\parti;\varphi(z);0)}{f(\varphi(z))}.
\end{equation*}
In particular, take such a map $\varphi$ with $\varphi(z)=\ii$ (which must exist); then we have 
\begin{equation*}
	R(\parti_2;z;0)=f(z)\times \frac{R(\parti_2;\ii;0)}{f(\ii)},
\end{equation*}
which completes the proof. 
\end{proof}

\begin{proof}[Proof of Theorem~\ref{thm::main_boundary} for $n=\ell=2$ and $\parti=(\{1,2\},\{3,4\})$]
	One can proceed as above and as in the proof of Lemma~\ref{lem::cvg_proba_aux1} for $\parti=(\{1,2\},\{3,4\})$ and in~\cite[Proof of Theorem~1.5]{Cam24} to show the existence of 
	\begin{equation}
	R(\parti;z_1,z_2;x_1,x_2):=\lim_{a\to 0}a^{-\frac{1}{4}-1} \times \mathbb{P}^a_{\mathbb{H}}\left[z_1^a\longleftrightarrow z_2^a \centernot{\longleftrightarrow}x_1^a\longleftrightarrow x_2^a\right],
	\end{equation}
with the following additional observation: we denote by $\mathcal{C}_m:=\{B_{\delta_m}(z_1)\longleftrightarrow B_{\delta_m}(z_2)\circ B_{\delta_m}(x_1)\longleftrightarrow B_{\delta_m}(x_2)\}$ the event that, if we declare closed all the edges inside $B_{\delta_m}(z_j)$, $j=1,2,3,4$, there are two disjoint open clusters connecting $B_{\delta_m}(x_1)$ to $B_{\delta_m}(x_2)$ and $B_{\delta_m}(z_1)$ to $B_{\delta_m}(z_2)$, respectively; and denote by $\mathcal{G}_{\delta_m^{\frac{99}{100}},L}(x_k)$ the event that there are three disjoint closed/open/closed arms crossing $\mathbb{H}\cap \left(B_{L}(x_k)\setminus B_{\delta_m^{\frac{99}{100}}}(x_k)\right)$; then we have (when $m$ is large enough) for some $L>0$ that is independent of $m$,  
\begin{align*}
	&\mathbb{P}_{\mathbb{H}}\left[\mathcal{C}_m\cap \{B_{\delta_k}(z_1)\longleftrightarrow B_{\delta_k}(x_1)\} | z_j\longleftrightarrow \partial B_{\epsilon}(z_j), x_r\longleftrightarrow \partial B_{\delta_k}(x_r),\enspace (j,r)\in \{1,2\}^2\right]\\
	\leq &  c \left(\frac{\sqrt{\delta_m}}{L}\right)^{\frac{35}{24}-\frac{1}{100}}\times \left(\frac{\delta_m}{\epsilon}\right)^{-1/4}+c\lim_{\eta\to 0} \frac{\mathbb{P}_{\mathbb{H}}\left[ \left(\cup_{r=1}^2 \mathcal{G}_{\delta_m^{\frac{99}{100}},L}(x_r)\right)\cap \left(\left\{ \mathcal{A}_{\eta,\delta_m}(x_r),\enspace 1\leq j\leq 2\right\}\right)\right]}{\mathbb{P}_{\mathbb{{H}}}\left[ \mathcal{A}_{\eta,\epsilon}(x_r),\enspace 1\leq j\leq 2\right]}\\
	\leq & c \left(\frac{\sqrt{\delta_m}}{L}\right)^{\frac{35}{24}-\frac{1}{100}}\times \left(\frac{\delta_m}{\epsilon}\right)^{-1/4}+c^*\mathbb{P}_{\mathbb{H}}\left[\mathcal{G}_{\delta_m^{\frac{99}{100}},L}(x_1)\right] \times \left(\frac{\delta_m}{\epsilon}\right)^{-1}\\
	\leq& c \left(\frac{\sqrt{\delta_m}}{L}\right)^{\frac{35}{24}-\frac{1}{100}}\times \left(\frac{\delta_m}{\epsilon}\right)^{-1/4}+c^{**} \left(\frac{\delta_m^{\frac{99}{100}}}{L}\right)^{\frac{5}{3}-\frac{1}{100}} \times \left(\frac{\delta_m}{\epsilon}\right)^{-1},
\end{align*}
where $c,c^*,c^{**}\in (0,\infty)$ are three constants that do not depend on $m$ and $k$. The first inequality follows from the spatial mixing property in Lemma~\ref{lem::spatial} and the proof of Lemma~\ref{lem::cvg_proba_aux1}, the second inequality uses the boundary one-arm exponent given in~\cite[Theorems~1 and 2]{PolyFKIsing} and the spatial mixing property, and the last inequality follows from the fact that $\mathbb{P}\left[\mathcal{F}_{\sqrt{\delta_m},L}(z_1)\right]\sim \left(\frac{\sqrt{\delta_m}}{L}\right)^{\frac{35}{24}+o(1)}$ as $\frac{\sqrt{\delta_m}}{L}\to 0$ and that $\mathbb{P}\left[\mathcal{G}_{\delta_m^{\frac{99}{100}},L}(x_1)\right]\sim \big(\frac{\delta_m^{\frac{99}{100}}}{L}\big)^{\frac{5}{3}+o(1)}$ as $\frac{\delta_m^{\frac{99}{100}}}{L}\to 0$, which are consequences of~\cite[Theorems~3 and 4]{PolyFKIsing} and~\cite[Theorems~1 and 2]{PolyFKIsing}, respectively.

The desired conformal covariance property of $R(\parti;z_1,z_2;x_1,x_2)$ can be derived as in the proof of part (2) of Theorem~\ref{thm::cvg_proba}, with the additional help of \eqref{eqn::scaling_bd}, which replaces Lemma~\ref{lem::scaling} for the boundary points.
\end{proof}

\appendix

\section{Exact formulas for some boundary correlation functions of the critical Ising model}

\subsection{Definitions and main results}
Suppose that $G=(V(G),E(G))$ is a finite subgraph of $\mathbb{Z}^2$.  
The \textit{Ising model} on $G$ is a random assignment $\sigma=(\sigma_v)_{v\in V(G)}\in \{\ominus,\oplus\}^{V(G)}$ of spins. The boundary condition $\tau$ is specified by three disjoint subsets $\{\oplus\}$, $\{\ominus\}$ and 
$\{\mathbf{f}\}$, which form a partition of the set of vertices in $\mathbb{Z}^2\setminus G$ that are adjacent to $\partial G$.
With boundary condition $\tau$,  and inverse-temperature $\beta>0$, the probability measure of the Ising model is given by
\[\phi_{\beta,G}^{\tau}[\sigma]=\frac{\exp\big(\beta \sum_{\langle v,w\rangle\in E(G)}\sigma_v\sigma_w+\beta\sum_{\stackrel{v\sim w}{v\in V(G),w\in\{\oplus\}}}\sigma_{v}-\beta\sum_{\stackrel{v\sim w}{v\in V(G),w\in\{\ominus\}}}\sigma_{v}\big)}{Z_{\beta,G}^{\tau}}\]
with
 \[ Z_{\beta,G}^{\tau}:=\sum_{\sigma}\exp\left(\beta \sum_{\langle v,w\rangle\in E(G)}\sigma_v\sigma_w+\beta\sum_{\stackrel{v\sim w}{v\in V(G),w\in\{\oplus\}}}\sigma_{v}-\beta\sum_{\stackrel{v\sim w}{v\in V(G),w\in\{\ominus\}}}\sigma_{v}\right). \]
In this article, we focus on the Ising model with critical inverse-temperature $\beta=\beta_c:=\frac{1}{2}\log(1+\sqrt{2})$. 

Let $x_1^a<\ldots<x_{N}^a<x_{N+1}^a<x_{N+2}^a$ be vertices in $a\mathbb{Z}$. We consider the Ising model on $a(\overline{\mathbb{H}}\cap \mathbb{Z}^2)$ with two types of boundary conditions:
\begin{itemize}
	\item free boundary condition $\{\mathbf{f}\}$, with expectation denoted by $\mathbb{E}_{\mathbb{H}}^{(a,\mathbf{f})}$;
	\item mixed free/$\oplus$ boundary condition $\{\mathbf{m}\}$:
	\begin{align} \label{eqn::Ising_bc_mixed}
		\oplus \text{ next to }[x_{N+1}^a x_{N+2}^a] ,\quad\text{and}\quad \text{free next to } \mathbb{R}\setminus [x_{N+1}^ax_{N+2}^a],
	\end{align}
where $[x_{N+1}^a x_{N+2}^a]:=[x_{N+1}^a,x_{N+2}^a]\cap  a\mathbb{Z}$; we denote by $\mathbb{E}_{\mathbb{H}}^{(a,\mathbf{m})}$ the corresponding expectation. 
\end{itemize}
Let $\#\in \{\mathbf{f},\mathbf{m}\}$. We are interested in the spin correlation $\mathbb{E}_{\mathbb{H}}^{(a,\#)}\left[\sigma_{x_1^a}\cdots \sigma_{x_N^a}\right]$. We will show that these boundary spin correlations (when normalized properly) have nontrivial conformally covariant scaling limits $\langle \sigma_{x_1}\cdots \sigma_{x_N}\rangle_{\mathbb{H}}^{\#}$, which have explicit expressions and satisfy certain BPZ equations \cite{BelavinPolyakovZamolodchikovInfiniteConformalSymmetry2D}, \cite{BelavinPolyakovZamolodchikovInfiniteConformalSymmetryCritical2D}.

We introduce some notation to present the formulas. For $n\geq 1$, we let $\Pi_n$ denote the set of all \textit{pair partitions} $\varpi=\{\{c_1,d_1\},\ldots,\{c_n,d_n\}\}$ of the set $\{1,2,\ldots,2n\}$, that is, partitions of this set into $n$ disjoint two-element subsets $\{c_j,d_j\}\subseteq \{1,2,\ldots,2n\}$, with the convention that 
\begin{align*}
	\text{$c_1<c_2<\cdots< c_N$ and $c_j<d_j$ for $j\in \{1,2,\ldots,n\}$.}
\end{align*}
We also denote by $\mathrm{sgn}(\varpi)$ the sign of the partition $\varpi$ defined as the sign of 
\begin{align*}
	\text{the product }	\prod (c-e)(c-f)(d-e)(d-f)\text{  over pairs of distinct elements $\{c,d\}, \{e,f\}\in \varpi$.}
\end{align*}

\begin{proposition}\label{prop::BPZ_free}
Suppose that $-\infty<x_1<\cdots<x_{2n}<\infty$ and let $x_1^a<\cdots<x_{2n}^a$ be $2n$ vertices in $a\mathbb{Z}$ satisfy $\lim_{a\to 0}x_j^a=x_j$ for $1\leq j\leq 2n$. Then we have, 
	\begin{equation} \label{eqn::Pfa_free}
\begin{split}
			\langle \sigma_{x_1}\cdots\sigma_{x_{2N}}\rangle_{\mathbb{H}}^{\mathbf{f}}:=&\lim_{a\to 0} a^{-2n}\times \mathbb{E}_{\mathbb{H}}^{(a,\mathbf{f})}\left[\sigma_{x_1^a}\cdots \sigma_{x_{2n}^a}\right]\\
	=&C^n_4\mathrm{Pf}\left[\frac{1}{x_k-x_j}\right]_{j,k=1}^{2n}=C^n_4\sum_{\varpi\in\Pi_n}\mathrm{sgn}(\varpi)\prod_{\{c,d\}\in \varpi} \frac{1}{x_d-x_c}, 
\end{split}
	\end{equation}
	where $C_4$ is the constant in~\eqref{eqn::FK_bd_two_three}. As a consequence, for each $j\in \{1,2,\ldots,2n\}$, the function $\langle \sigma_{x_1}\ldots \sigma_{x_{2N}}\rangle_{\mathbb{H}}^{\mathbf{f}}$ defined by~\eqref{eqn::Pfa_free} is annihilated by
	\begin{equation} \label{eqn::BPZ_free}
	\frac{3}{2}\partial^2_{j}+\sum_{k\neq j} \frac{2}{x_k-x_j}\partial_k-\frac{1}{(x_k-x_j)^2}.
\end{equation}
\end{proposition}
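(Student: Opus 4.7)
The plan is to combine the Edwards-Sokal expansion with the Pfaffian (Wick) structure of the free-fermion representation of the boundary Ising model. The starting observation is that, under free boundary conditions, the Edwards-Sokal coupling yields
\[
\mathbb{E}_{\mathbb{H}}^{(a,\mathbf{f})}\bigl[\sigma_{x_1^a}\cdots\sigma_{x_{2n}^a}\bigr]=\sum_{\varpi\in\mathcal{Q}} \mathbb{P}^a_{\mathbb{H}}\bigl[G(\varpi;x_1^a,\ldots,x_{2n}^a)\bigr],
\]
where $\mathcal{Q}$ is the set of partitions of $\{1,\ldots,2n\}$ whose blocks all have even size. Applying Theorem~\ref{thm::main_boundary} with no bulk points and $\ell=2n$, each FK connection probability admits a conformally covariant scaling limit after multiplication by a power of $a$ dictated by the boundary multi-arm exponents of~\cite{PolyFKIsing}. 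Pair partitions $\varpi\in\Pi_n$ contribute at the leading boundary scale, while partitions containing a block of size $\ge 4$ decay strictly faster, so only pair partitions survive in the limit.

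The crucial step is to identify the resulting sum over pair partitions with the Pfaffian. For this, I would invoke the Pfaffian identity valid at the lattice level for the boundary spin correlation with free boundary conditions, namely
\[
\mathbb{E}_{\mathbb{H}}^{(a,\mathbf{f})}\bigl[\sigma_{x_1^a}\cdots\sigma_{x_{2n}^a}\bigr]=\mathrm{Pf}\bigl[M^a\bigr],\qquad M^a_{jk}:=\mathrm{sgn}(k-j)\,\mathbb{E}_{\mathbb{H}}^{(a,\mathbf{f})}\bigl[\sigma_{x_j^a}\sigma_{x_k^a}\bigr].
\]
This identity is a consequence of the free-fermion representation of the planar Ising model and can be justified either via the Kac-Ward or dimer/Kasteleyn framework adapted to the half-plane, or by constructing a multi-point extension of Smirnov's fermionic observable in the spirit of Section~\ref{sec::FKObservable}, since in the boundary/free setting boundary spin insertions correspond to fermion insertions in the s-holomorphic framework. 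Combining this identity with the two-point case of Theorem~\ref{thm::main_boundary}, which gives $a^{-1}\mathbb{E}_{\mathbb{H}}^{(a,\mathbf{f})}[\sigma_{x_j^a}\sigma_{x_k^a}]\to C_4/(x_k-x_j)$ for $j<k$, and using that the Pfaffian of a $2n\times 2n$ antisymmetric matrix is homogeneous of degree $n$ in its entries, yields the claimed limit $C_4^n\,\mathrm{Pf}[1/(x_k-x_j)]$.

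The BPZ relation~\eqref{eqn::BPZ_free} then follows by a direct algebraic check on the Pfaffian. Expanding $\mathrm{Pf}[1/(x_k-x_j)]$ as the signed sum over $\varpi\in\Pi_n$ and applying the operator in~\eqref{eqn::BPZ_free} to $\prod_{\{c,d\}\in\varpi}1/(x_d-x_c)$, the terms group into the contribution from the pair containing $j$ (on which the relation reduces to the identity $\tfrac{3}{2}f''(y)+\tfrac{2}{y-x_j}f'(y)-\tfrac{1}{(y-x_j)^2}f(y)=0$ for $f(y)=1/(y-x_j)$) and cross-contributions between distinct pairs, which cancel after summing over $\varpi$ thanks to the three-term partial fraction identity
\[
\frac{1}{(x_j-x_k)(x_j-x_\ell)}+\frac{1}{(x_k-x_j)(x_k-x_\ell)}+\frac{1}{(x_\ell-x_j)(x_\ell-x_k)}=0.
\]
This is the standard computation showing that free-fermion Pfaffians built from the kernel $1/(x-y)$ are annihilated by the weight-$1/2$ BPZ operator.

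The main obstacle is establishing the lattice Pfaffian identity with full rigor in the half-plane free-boundary setting: while it is a classical expectation from the free-fermion representation of Ising, one needs either to adapt the Kac-Ward or Cimasoni-Reshetikhin/Chelkak-Cimasoni-Kassel formalism to the free half-plane, or to develop a multi-point fermionic observable generalizing the one in Section~\ref{sec::FKObservable} and to track its Pfaffian structure through the continuum limit. Once this identity is available, the remainder is bookkeeping: passing $a\to 0$ through the Pfaffian (a linear combination of products), controlling the subleading partitions via the boundary arm exponents in~\cite{PolyFKIsing}, and performing the algebraic verification of the BPZ equation.
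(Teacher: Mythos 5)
Your core route is the same as the paper's: invoke the lattice Pfaffian identity for boundary spins with free boundary conditions, feed in the two-point limit $a^{-1}\mathbb{E}_{\mathbb{H}}^{(a,\mathbf{f})}[\sigma_{x_j^a}\sigma_{x_k^a}]\to C_4/(x_k-x_j)$ obtained from Theorem~\ref{thm::main_boundary} via the Edwards--Sokal coupling, pass to the limit term by term in the finite, multilinear Pfaffian expansion, and then verify the BPZ equations for $\mathrm{Pf}\left[1/(x_k-x_j)\right]$. What you single out as the main obstacle is not one: the Pfaffian identity for boundary spin correlations with free boundary conditions is classical and is exactly what the paper cites (\cite{Paffian}, with a recent proof in \cite[Section~1.4]{EmergentPlanarity}), so no Kac--Ward/dimer adaptation or multi-point observable construction is needed; similarly, the paper obtains \eqref{eqn::BPZ_free} by quoting \cite[Proposition~4.6]{KytolaPeltolaPurePartitionFunctions}, while your direct partial-fraction verification is a workable alternative.

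There is, however, a genuine error in your opening paragraph, fortunately not load-bearing. After expanding via Edwards--Sokal over all even partitions, you claim that partitions containing a block of size at least four decay strictly faster than pair partitions, so that only pairings survive in the limit. That is false for boundary insertions: by Theorem~\ref{thm::main_boundary}, for every partition $\parti$ without singletons the quantity $a^{-\ell/2}\,\mathbb{P}^a_{\mathbb{H}}\left[G(\parti;x_1^a,\ldots,x_\ell^a)\right]$ converges to a limit in $(0,\infty)$; the order $a^{\ell/2}$ is set by the boundary one-arm exponent at each marked point and does not depend on the block structure, so all even partitions contribute at the same order. In particular, the (positive) Edwards--Sokal sum does not collapse asymptotically to a signed sum over pairings; the fact that the sum over all even partitions equals the Pfaffian of two-point functions is precisely the content of the lattice identity of \cite{Paffian}, which is why that identity, and not an arm-exponent comparison, is the right tool here. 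A final bookkeeping remark: with $2n$ boundary points the normalization consistent with Theorem~\ref{thm::main_boundary} (and with your two-point input) is $a^{-n}$, i.e.\ $a^{-\ell/2}$ with $\ell=2n$; the exponent $a^{-2n}$ appearing in the statement is a slip, and your accounting is the correct one.
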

\begin{proof}
	It is well-known that $\mathbb{E}_{\mathbb{H}}^{(a,\mathbf{f})}\left[\sigma_{x_1^a}\cdots\sigma_{x_{2N}^a}\right]$ has the following Pfaffian expression\footnote{The Pfaffian relation \eqref{eqn::Paffian_free} is valid for all $\beta\geq 0$.}~\cite{Paffian} (see also \cite[Section~1.4]{EmergentPlanarity} for a new proof):
	\begin{equation} \label{eqn::Paffian_free}
\mathbb{E}_{\mathbb{H}}^{(a,\mathbf{f})}\left[\sigma_{x_1^a}\cdots\sigma_{x_{2n}^a}\right]=\mathrm{Pf}\left[\mathbb{E}_{\mathbb{H}}^{(a,\mathbf{f})}\left[\sigma_{x_j^a}\sigma_{x_k^a}\right]\right]_{j,k=1}^{2n}=\sum_{\varpi\in\Pi_{n}}\mathrm{sgn}(\varpi) \sum_{\{c,d\}\in \varpi} \mathbb{E}_{\mathbb{H}}^{(a,\mathbf{f})}\left[\sigma_{x_c^a}\sigma_{x_d^a}\right].
	\end{equation}
	Combining Theorem~\ref{thm::main_boundary},~\eqref{eqn::Paffian_free} with Edwards-Sokal coupling (see~\cite{EdwardsSokal}), we obtain~\eqref{eqn::Pfa_free}. Combining~\eqref{eqn::Pfa_free} with~\cite[Proposition~4.6]{KytolaPeltolaPurePartitionFunctions}, we obtain~\eqref{eqn::BPZ_free}. 
\end{proof}

The situation for the mixed boundary condition~\eqref{eqn::Ising_bc_mixed} is more complicated, even though one still has the Pfaffian structure for the boundary spin correlations. Indeed, already for $N=2$, the two-point spin correlation $\langle \sigma_{x_1}\sigma_{x_2}\rangle_{\mathbb{H}}^{\mathbf{m}}$ in the continuum is a conformally covariant function of four variables, $x_1,x_2,x_3$ and $x_4$, whose functional form, however, is not fully determined by its conformal covariance property. Instead, we will figure out its expression by relating it to the $\mathrm{SLE}_3$ partition function via the high-temperature expansion of the Ising model (see Lemmas~\ref{lem::high_tempe} and~\ref{lem::low_tempe} below and~\cite[Theorem~3.1]{IzyurovObservableFree}). 
\begin{theorem} \label{thm::correlation_one_plus}
	Suppose that $-\infty<x_1<\ldots <x_{N}<x_{N+1}<x_{N+2}<\infty$ and let $x_1^a<\ldots<x_{N}^a<x_{N+1}^a<x_{N+2}^a$ be $N+2$ vertices on $a\mathbb{Z}$ which satisfy $\lim_{a\to 0}x_j^a=x_{j}$ for $1\leq j\leq N+2$. Then there exist constants $C_9,C_{10}\in (0,\infty)$ such that 
	\begin{equation} \label{eqn::scaling_corr_one_plus}
\begin{split}
			\langle \sigma_{x_1}\ldots\sigma_{x_N}\rangle_{\mathbb{H}}^{\mathbf{m}}:=&\lim_{a\to 0} a^{-\frac{N}{8}} \times \mathbb{E}^{(a,\mathbf{m})}_{\mathbb{H}}\left[\sigma_{x_1^a}\cdots\sigma_{x_N^a}\right]\\
			=&\begin{cases}
		C_9^n \mathcal{R}_{2n}(x_1,\ldots,x_{2n};x_{2n+1},x_{2n+2}) &\text{if }N=2n,\\
		C_{10} C_9^n \mathcal{R}_{2n+1}(x_1,\ldots,x_{2n+1};x_{2n+2},x_{2n+3}), &\text{if }N=2n+1,
	\end{cases}
\end{split}
	\end{equation}
	where $\mathcal{R}_{2n}$ and $\mathcal{R}_{2n+1}$ are defined by~\eqref{eqn::def_Ising_total_par_even} and~\eqref{eqn::def_Ising_total_par_odd} below. Moreover, for $j\in \{1,2,\ldots,N\}$, the function $\langle \sigma_{x_1}\ldots\sigma_{x_{N}}\rangle_{\mathbb{H}}^{\mathbf{m}}$ defined by~\eqref{eqn::scaling_corr_one_plus} is annihilated by the differential operator
	\begin{equation}\label{eqn::BPZ_one_plus}
			\frac{3}{2}\partial^2_{j}+\sum_{k\neq j} \frac{2}{x_k-x_j}\partial_j-\frac{2\Delta_k}{(x_k-x_j)^2},
	\end{equation}
	where $\Delta_1=\Delta_2=\ldots=\Delta_N=\frac{1}{2}$ and $\Delta_{N+1}=\Delta_{N+2}=0$. 
\end{theorem}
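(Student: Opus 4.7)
\medskip

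\noindent\textbf{Plan.} My approach runs through the Edwards--Sokal representation and Theorem~\ref{thm::main_boundary} to produce existence, and then invokes the high/low-temperature expansion together with Izyurov's observable to pin down the explicit formula and the BPZ equation.

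First, I would use the Edwards--Sokal coupling for the mixed boundary condition $\mathbf{m}$: each FK cluster not touching the wired arc $[x_{N+1}^a x_{N+2}^a]$ carries an independent uniform $\pm 1$ spin, while clusters touching the wired arc carry $+1$. Expanding $\prod_{j}\sigma_{x_j^a}$ and averaging over the independent spins gives
\begin{equation*}
\mathbb{E}^{(a,\mathbf{m})}_{\mathbb{H}}\bigl[\sigma_{x_1^a}\cdots\sigma_{x_N^a}\bigr]=\sum_{\parti\in\mathcal{Q}_N^{\mathbf{m}}}\mathbb{P}^{a}_{\mathbb{H}}\bigl[G(\parti;x_1^a,\ldots,x_N^a,[x_{N+1}^a x_{N+2}^a])\bigr],
\end{equation*}
where $\mathcal{Q}_N^{\mathbf{m}}$ is the set of partitions of $\{1,\ldots,N,\star\}$ (with $\star$ representing the wired arc) such that every block not containing $\star$ has even size. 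Each summand on the right is an FK-Ising connection probability between $N$ boundary lattice vertices and a macroscopic set. Applying (a mild extension of) Theorem~\ref{thm::main_boundary}, where the wired arc plays the role of a macroscopic target (contributing no additional power of $a$), each term has a nontrivial scaling limit under the normalization $a^{-N/2}$ (one $a^{-1/2}$ factor per boundary point). Summing over $\parti\in\mathcal{Q}_N^{\mathbf{m}}$ and tracking parities of $|S|:=\#\{i:\{i\}\in\parti\}$ — even for $N=2n$, odd for $N=2n+1$ — gives existence of the limit on the left of~\eqref{eqn::scaling_corr_one_plus}.

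Second, to identify this limit with the explicit quantities $\mathcal{R}_{2n}$ and $\mathcal{R}_{2n+1}$, I would invoke the high-temperature expansion of the critical Ising model with mixed boundary condition. As in Proposition~\ref{prop::BPZ_free}, a Pfaffian-type identity (the analog of~\eqref{eqn::Paffian_free} for mixed boundary, derivable via Kramers--Wannier duality or Kadanoff--Ceva disorder variables) reduces the $N$-point spin correlation to a Pfaffian of two-point blocks whose scaling limits are Izyurov's fermionic observables with $(\mathbf{f},+)$-changing boundary data at $x_{N+1},x_{N+2}$. Lemmas~\ref{lem::high_tempe} and~\ref{lem::low_tempe}, together with \cite[Theorem~3.1]{IzyurovObservableFree}, compute these limits explicitly and relate them to the $\mathrm{SLE}_3$ partition function for the interface from $x_{N+1}$ to $x_{N+2}$ passing through the spin insertions. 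Cross-checking the result of this computation with the Edwards--Sokal sum from the first step fixes the multiplicative constants $C_9$ (one per pair of spins) and $C_{10}$ (the single extra factor in the odd case) and produces the formulas $\mathcal{R}_{2n}, \mathcal{R}_{2n+1}$.

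Third, for the BPZ equation~\eqref{eqn::BPZ_one_plus}, I would use the $\mathrm{SLE}_3$ martingale approach: the explicit form obtained in the previous step is, up to scalar prefactors, proportional to the $\mathrm{SLE}_3$ partition function associated with a curve that visits the spin insertions and is anchored at $x_{N+1},x_{N+2}$. Since $\mathrm{SLE}_3$ corresponds to $\kappa=3$ and the boundary Ising spin has conformal weight $h_{1,2}=1/2$, the standard null-vector computation yields, for every $j\in\{1,\ldots,N\}$, the second-order differential operator in~\eqref{eqn::BPZ_one_plus} with $\Delta_k=1/2$ for the other spins and $\Delta_{N+1}=\Delta_{N+2}=0$, since $x_{N+1},x_{N+2}$ enter only as domain-parameterizing marked points in the partition function. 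Alternatively, for each $N$ the equation can be verified by plugging the explicit formula $\mathcal{R}_N$ into~\eqref{eqn::BPZ_one_plus} and using the Fuchsian equations already satisfied by Izyurov's observables.

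The main obstacle is step two: one must establish a discrete Pfaffian identity for the mixed-boundary Ising spin correlation strong enough to pass to the scaling limit together with sharp control on the normalization coming from Theorem~\ref{thm::main_boundary}. The odd-$N$ case is genuinely more delicate because the single unpaired spin must connect to the wired arc, so the corresponding Pfaffian block involves a boundary-to-macroscopic-arc observable with a different scaling exponent (captured by the extra factor $C_{10}$); reconciling this with the SLE$_3$ partition function and verifying that no extra partition functions or constants are needed will require a careful application of the convergence of Smirnov's FK-Ising observable (Proposition~\ref{prop::one_arm_bd}) and of the discrete holomorphic tools of~\cite{IzyurovObservableFree}.
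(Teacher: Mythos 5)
Your proposal follows essentially the same route as the paper's proof: existence of the limit via the connection-probability machinery behind Theorem~\ref{thm::main_boundary}, reduction of the $N$-point correlation to one- and two-point blocks through a Pfaffian identity for the mixed boundary condition (Lemma~\ref{lem::Paffian_mixed}), identification of the two-point block via the high-temperature expansion and Izyurov's free-boundary fermionic observable (Lemmas~\ref{lem::high_tempe} and~\ref{lem::low_tempe}) precisely because M\"obius covariance cannot fix a function of a cross-ratio, and the BPZ equations via the identification of $\mathcal{R}_N$ with a local multiple $\mathrm{SLE}_3$ partition function and the commutation relations. The only cosmetic discrepancy is your normalization $a^{-N/2}$ versus the $a^{-N/8}$ printed in the theorem statement; your exponent is the one consistent with the paper's own one- and two-point lemmas, so this does not affect the correctness of your argument.
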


Now, let us define the functions $\mathcal{R}_N$ in Theorem~\ref{thm::correlation_one_plus}. For $m\geq 1$, we write
\begin{equation*}
	\chamber_{m}:=\{(x_1,\ldots,x_m)\in \mathbb{R}^m: x_1<x_2<\ldots<x_m\}.
\end{equation*}
When $N=2n$ with $n\ge 1$, we define $\mathcal{R}_N: \chamber_{N+2}\to \R$ by
\begin{equation} \label{eqn::def_Ising_total_par_even}
	\begin{split}
		&	\mathcal{R}_{2n}(x_1,\ldots,x_{2n};x_{2n+1},x_{2n+2})=\prod_{k=1}^{2n}\frac{1}{\sqrt{(x_{2n+2}-x_k)(x_{2n+1}-x_k)}}\\
	&	\qquad\qquad\qquad\qquad\qquad\qquad\times \sum_{\varpi\in\Pi_n}\mathrm{sgn}(\varpi)\prod_{\{c,d\}\in\varpi}\frac{(x_{2n+1}-x_c)(x_{2n+2}-x_d)+(x_{2n+1}-x_d)(x_{2n+2}-x_c)}{x_d-x_c}. 
	\end{split}
\end{equation}
When $N=2n+1$ with $n\ge 0$, we define $\mathcal{R}: \chamber_{N+2}\to \R$ by
\begin{equation} \label{eqn::def_Ising_total_par_odd}
	\begin{split}
		&	\mathcal{R}_{2n+1}(x_1,\ldots,x_{2n+1};x_{2n+2},x_{2n+3})=(x_{2n+3}-x_{2n+2})^{\frac{1}{2}}\times \prod_{k=1}^{2n+1}\frac{1}{\sqrt{(x_{2n+2}-x_k)(x_{2n+3}-x_k)}}\\
		&\qquad\qquad\qquad\qquad\qquad\quad\times \sum_{\varpi\in\Pi_{n+1}}\mathrm{sgn}(\varpi)\prod_{\genfrac{}{}{0pt}{}{\{c,d\}\in \varpi}{d\neq 2n+2}} \frac{(x_{2n+2}-x_c)(x_{2n+3}-x_d)+(x_{2n+2}-x_d)(x_{2n+3}-x_c)}{x_d-x_c}.
	\end{split}
\end{equation}

\begin{remark}
	We emphasize that our arguments allow one to extend the boundary condition~\eqref{eqn::Ising_bc_mixed} to more general alternating $\mathrm{free}/\mathrm{wired}$ boundary conditions, where a ``$\mathrm{wired}$''  boundary segment means that the spins on this segment are conditioned to be the same. 
\end{remark}


We now proceed with the proof of~Theorem~\ref{thm::correlation_one_plus}.
\subsection{Proof of Theorem~\ref{thm::correlation_one_plus} modulo a key lemma}
We start by showing that, with mixed boundary conditions~\eqref{eqn::Ising_bc_mixed}, Ising boundary spin correlations have a Pfaffian structure analogous to \eqref{eqn::Paffian_free}, which is valid for free boundary conditions.
\begin{lemma} \label{lem::Paffian_mixed}
	Let $x_1^a<x_2^a<\ldots<x_{N}^a<x_{N+1}^a<x_{N+2}^a$ be vertices in $a\mathbb{Z}\cap \mathbb{R}$. Consider Ising model\footnote{The results in Lemma~\ref{lem::Paffian_mixed} hold for generic inverse temperature $\beta\geq 0$.} on $a(\overline{\mathbb{H}}\cap \mathbb{Z}^2)$ with the mixed boundary condition~\eqref{eqn::Ising_bc_mixed}. If $N=2n$, then we have 
	\begin{align*}
		\mathbb{E}_{\mathbb{H}}^{(a,\mathbf{m})}\left[\sigma_{x_{1}^a}\cdots \sigma_{x_{2n}^a}\right]=\sum_{\varpi\in\Pi_n}\mathrm{sgn}(\varpi)\prod_{\{c,d\}\in \varpi}  \mathbb{E}_{\mathbb{H}}^{(a,\mathbf{m})}\left[\sigma_{x_c^a}\sigma_{x_d^a}\right].
	\end{align*}
	If $N=2n+1$, then we have 
	\begin{align*}
		\mathbb{E}_{\mathbb{H}}^{(a,\mathbf{m})}[\sigma_{x_1^a}\cdots \sigma_{x_{2n+1}^a}]=\sum_{\varpi\in\Pi_{n+1}}\mathrm{sgn}(\varpi) \mathbb{E}_{\mathbb{H}}^{(a,\mathbf{m})}[\sigma_{x_{c'}^a}] \times\prod_{\{c,d\}\in \varpi\atop 
		d\neq 2n+2} \mathbb{E}_{\mathbb{H}}^{(a,\mathbf{m})}\left[\sigma_{x_c^a}\sigma_{x_d^a}\right],
	\end{align*}
	where $c'$ denotes the index paired to $2n+2$ in $\varpi$. 
\end{lemma}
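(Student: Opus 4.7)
The strategy is to realize the mixed boundary condition $\mathbf{m}$ as a conditioned free-boundary Ising measure on a slightly enlarged planar graph, so that the Pfaffian identity~\eqref{eqn::Paffian_free} (in its general form, valid on any finite planar graph) can be invoked directly. Concretely, I would attach to $a(\overline{\mathbb{H}}\cap\mathbb{Z}^2)$ a single auxiliary ``ghost'' vertex $\mathfrak{z}$ placed just below the real axis and connect $\mathfrak{z}$ by extra edges of coupling $J$ to every vertex of the segment $[x_{N+1}^a,x_{N+2}^a]$. These ghost edges can be routed through the (otherwise empty) lower half-plane so that the enlarged graph $\graph^{\mathrm{ext}}$ remains planar and $\mathfrak{z}$, together with $x_{N+1}^a,x_{N+2}^a$ and all real-line vertices outside $(x_{N+1}^a,x_{N+2}^a)$, lies on the outer face of $\graph^{\mathrm{ext}}$. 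In the limit $J\to\infty$ every spin in $[x_{N+1}^a,x_{N+2}^a]$ is forced to equal $\sigma_\mathfrak{z}$, so the free-boundary Ising measure $\mathbb{E}^{\mathrm{ext}}$ on $\graph^{\mathrm{ext}}$ conditioned on $\sigma_\mathfrak{z}=+$ coincides with $\mathbb{E}^{(a,\mathbf{m})}_{\mathbb{H}}$.

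I would then use the global $\mathbb{Z}_2$ spin-flip symmetry of the free-boundary measure $\mathbb{E}^{\mathrm{ext}}$. The elementary identity $\mathbb{E}^{\mathrm{ext}}[X\mid \sigma_\mathfrak{z}=+]=\mathbb{E}^{\mathrm{ext}}[X]+\mathbb{E}^{\mathrm{ext}}[\sigma_\mathfrak{z}\,X]$ together with the vanishing of any $\mathbb{E}^{\mathrm{ext}}$-correlator containing an odd number of spins yields
\begin{align*}
\mathbb{E}^{(a,\mathbf{m})}_{\mathbb{H}}[\sigma_{x_1^a}\cdots\sigma_{x_N^a}]=
\begin{cases}
\mathbb{E}^{\mathrm{ext}}[\sigma_{x_1^a}\cdots\sigma_{x_{2n}^a}], & N=2n,\\
\mathbb{E}^{\mathrm{ext}}[\sigma_\mathfrak{z}\,\sigma_{x_1^a}\cdots\sigma_{x_{2n+1}^a}], & N=2n+1,
\end{cases}
\end{align*}
and, specialized to two and one insertions, also gives the translations $\mathbb{E}^{\mathrm{ext}}[\sigma_{x_c^a}\sigma_{x_d^a}]=\mathbb{E}^{(a,\mathbf{m})}_{\mathbb{H}}[\sigma_{x_c^a}\sigma_{x_d^a}]$ and $\mathbb{E}^{\mathrm{ext}}[\sigma_\mathfrak{z}\sigma_{x_c^a}]=\mathbb{E}^{(a,\mathbf{m})}_{\mathbb{H}}[\sigma_{x_c^a}]$. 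All of the spin vertices appearing in these $\mathbb{E}^{\mathrm{ext}}$-correlators lie on the outer face of $\graph^{\mathrm{ext}}$.

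Finally, I would apply the Pfaffian identity underlying~\eqref{eqn::Paffian_free} on the planar graph $\graph^{\mathrm{ext}}$ with free boundary condition and the (non-uniform) ferromagnetic couplings described above. Expanding the correlator as $\sum_{\varpi}\mathrm{sgn}(\varpi)\prod_{\{c,d\}\in\varpi}\mathbb{E}^{\mathrm{ext}}[\sigma_{y_c}\sigma_{y_d}]$ and substituting the two identifications from the previous step into each two-point factor produces exactly the two formulas stated in the lemma; in the odd case the pair $\{c',2n+2\}$, with $2n+2$ labeling the ghost vertex $\mathfrak{z}$, contributes the one-point factor $\mathbb{E}^{(a,\mathbf{m})}_{\mathbb{H}}[\sigma_{x_{c'}^a}]$ as written. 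The main technical point is that the Pfaffian identity must apply to $\graph^{\mathrm{ext}}$ with the non-uniform couplings and survive the $J\to\infty$ limit; this is handled by the classical fact that the Pfaffian structure holds on any finite planar graph with arbitrary ferromagnetic edge weights and free boundary condition, so the identity is valid for every finite $J\ge 0$, and since both sides are continuous in $J$ the limit inherits the identity.
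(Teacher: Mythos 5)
Your overall route is the one the paper itself points to as an alternative: realize the mixed boundary condition as a limit of free-boundary Ising measures on an enlarged planar graph, use the global spin-flip symmetry to convert the conditioning on the ghost spin into an insertion of $\sigma_{\mathfrak z}$ in odd correlators, and then invoke the Groeneveld--Boel--Kasteleyn Pfaffian identity for outer-face spins of a finite planar graph with free boundary conditions and arbitrary ferromagnetic couplings. The bookkeeping with $\mathbb{E}^{\mathrm{ext}}[X\mid\sigma_{\mathfrak z}=+]=\mathbb{E}^{\mathrm{ext}}[X]+\mathbb{E}^{\mathrm{ext}}[\sigma_{\mathfrak z}X]$, the parity argument, the identification of the pair $\{c',2n+2\}$ with the one-point factor, and the planarity/outer-face/cyclic-order considerations are all correct.

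There is, however, one genuine mismatch: your gadget does not reproduce the boundary condition $\mathbf m$ of~\eqref{eqn::Ising_bc_mixed}. In the paper's convention, ``$\oplus$ next to $[x_{N+1}^a x_{N+2}^a]$'' fixes to $+$ the vertices of the lattice \emph{outside} the domain that are adjacent to the segment; the segment spins themselves still fluctuate, subject to a boundary field of strength $\beta$ per fixed external neighbor. Connecting a single ghost vertex $\mathfrak z$ to every segment vertex with coupling $J\to\infty$ and conditioning on $\sigma_{\mathfrak z}=+$ instead forces every segment spin to equal $+$, which is a strictly stronger (wired-to-$+$) condition and a different finite-$a$ measure: for $v$ in the segment one has $\mathbb{E}_{\mathbb{H}}^{(a,\mathbf m)}[\sigma_v]<1$, while your limiting measure gives $1$. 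So, as written, you prove the Pfaffian identity for the wrong measure. The fix is cosmetic and is exactly what the paper does in~\eqref{eqn::high_tempe_one_two_aux0}: adjoin the external row below the segment with couplings $\beta_c$ on the vertical edges joining it to the domain, send to infinity only the couplings \emph{within} that added row (equivalently, interpose auxiliary vertices between $\mathfrak z$ and the segment, with coupling $\beta_c$ toward the segment and $J\to\infty$ toward $\mathfrak z$), and condition one added spin to be $+$. Planarity, the outer-face property, the spin-flip identity, and the continuity-in-$J$ argument all survive this modification unchanged.
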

\begin{proof}
One can basically mimic the proof of the Pfaffian structure of the boundary spin correlations for the free boundary condition in~\cite[Section~1.4]{EmergentPlanarity}. Alternatively, one can use the same trick as in~\eqref{eqn::high_tempe_one_two_aux0} below to express the spin correlations for the mixed boundary condition as the limit of a sequence of spin correlations for free boundary conditions and then utilize the known Pfaffian structure for the latter.
\end{proof}

\begin{lemma} \label{lem::correlation_one_plus_aux1}
With the notation of Theorem~\ref{thm::correlation_one_plus}, suppose that $N=1$, then there exists a constant $C_{10}\in (0,\infty)$ such that 
	\begin{align}\label{eqn::correlation_one_plus_aux1}
		\lim_{a\to 0} a^{-\frac{1}{2}}\times \mathbb{E}_{\mathbb{H}}^{(a,\mathbf{m})}\left[\sigma_{x_1^a}\right]=C_{10}\frac{\sqrt{x_3-x_2}}{\sqrt{x_3-x_1}\sqrt{x_2-x_1}}. 
	\end{align}
\end{lemma}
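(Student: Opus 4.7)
The plan is to combine the Edwards--Sokal coupling with the boundary-to-boundary connection machinery developed in the proof of Theorem~\ref{thm::main_boundary}, after which the explicit form is forced by conformal covariance alone, since ordered triples of boundary points of $\mathbb{H}$ carry no M\"obius invariant. Concretely, under the mixed boundary condition the FK cluster containing the wired arc $[x_2^a, x_3^a]$ receives spin $+1$ almost surely while all other clusters are assigned $\pm 1$ uniformly, so
\[
\mathbb{E}_{\mathbb{H}}^{(a,\mathbf{m})}[\sigma_{x_1^a}] \;=\; \widetilde{\mathbb{P}}_{\mathbb{H}}^a\bigl[x_1^a \longleftrightarrow [x_2^a, x_3^a]\bigr],
\]
where $\widetilde{\mathbb{P}}_{\mathbb{H}}^a$ is the critical FK-Ising measure on $a(\overline{\mathbb{H}} \cap \mathbb{Z}^2)$ that is wired on $[x_2^a, x_3^a]$ and free on $\mathbb{R} \setminus [x_2^a, x_3^a]$. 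The normalization $a^{1/2}$ is the boundary one-arm exponent at $x_1$.

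Next I would prove existence of $F(x_1; x_2, x_3) := \lim_{a \to 0} a^{-1/2}\, \widetilde{\mathbb{P}}_{\mathbb{H}}^a[x_1^a \longleftrightarrow [x_2^a, x_3^a]] \in (0,\infty)$ by mimicking the existence part of Theorem~\ref{thm::main_boundary} in the case $n=0, \ell=1$. Fixing $\epsilon>0$ small compared with the distances from $x_1$ to $\{x_2,x_3\}$, I would decompose
\[
a^{-1/2}\,\widetilde{\mathbb{P}}_{\mathbb{H}}^a\bigl[x_1^a \longleftrightarrow [x_2^a, x_3^a]\bigr]
\;=\; \frac{\widetilde{\mathbb{P}}_{\mathbb{H}}^a[x_1^a \longleftrightarrow [x_2^a, x_3^a]]}{\widetilde{\mathbb{P}}_{\mathbb{H}}^a[x_1^a \longleftrightarrow \partial B_\epsilon(x_1^a)]}
\cdot \frac{\widetilde{\mathbb{P}}_{\mathbb{H}}^a[x_1^a \longleftrightarrow \partial B_\epsilon(x_1^a)]}{\mathbb{P}_*^a[u_3^a \longleftrightarrow (u_1^a u_2^a)]}
\cdot a^{-1/2}\,\mathbb{P}_*^a[u_3^a \longleftrightarrow (u_1^a u_2^a)]
\]
and treat the three factors in turn: the last one converges to $C_9$ by Proposition~\ref{prop::one_arm_bd}; the middle one converges to a positive constant (depending on $\epsilon$) by the argument behind Lemma~\ref{lem::norma_1} adapted to boundary vertices with the help of \eqref{eqn::scaling_bd}, using spatial mixing (Lemma~\ref{lem::spatial}) to decouple the microscopic neighborhood of $x_1^a$ from the wired arc $[x_2^a, x_3^a]$; and the first converges to a continuum conditional probability defined via the SLE$_{16/3}$ interface from $x_2$ to $x_3$ and the CLE$_{16/3}$ loops in the complementary free region, via the separation-of-arms coupling of Lemma~\ref{lem::separation} and the convergence results in Theorems~\ref{thm::SLE} and \ref{thm::CLE}, as in the proof of Lemma~\ref{lem::cvg_proba_aux1}.

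Conformal covariance of $F$ with weight $1/2$ at $x_1$ and weight $0$ at $x_2,x_3$, i.e.
\[
F(\varphi(x_1); \varphi(x_2), \varphi(x_3)) \;=\; F(x_1; x_2, x_3)\, |\varphi'(x_1)|^{-1/2}
\]
for every M\"obius automorphism $\varphi$ of $\mathbb{H}$ with $\varphi(x_j)$ finite, follows by essentially the same route as part (2) of Theorem~\ref{thm::cvg_proba} and the conformal covariance portion of Theorem~\ref{thm::main_boundary}, combining the conformal invariance of CLE$_{16/3}$ and the interface from $x_2$ to $x_3$ with the scaling identity \eqref{eqn::scaling_bd} at $x_1$; no arm exponent is needed at $x_2$ or $x_3$ because these are boundary-condition-change points rather than spin insertions. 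Since the M\"obius group of $\mathbb{H}$ acts transitively on ordered triples of boundary points, the covariance law determines $F$ up to a positive multiplicative constant. The explicit function
\[
G(x_1, x_2, x_3) \;:=\; \frac{\sqrt{x_3-x_2}}{\sqrt{x_3-x_1}\sqrt{x_2-x_1}}
\]
satisfies the same transformation law, as one checks directly from $\varphi(x_i) - \varphi(x_j) = (x_i - x_j)\sqrt{\varphi'(x_i)\varphi'(x_j)}$ (valid for orientation-preserving $\varphi$ and real $x_i, x_j$); therefore $F = C_{10}\,G$ for some $C_{10} \in (0,\infty)$, which yields \eqref{eqn::correlation_one_plus_aux1}.

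The main obstacle will be the existence step: unlike the setting of Theorem~\ref{thm::main_boundary}, one endpoint of the connection event is a wired boundary arc of macroscopic length rather than a single vertex, so the coupling-and-separation argument at the microscopic one-arm near $x_1$ must be carried out in tandem with the control of the macroscopic influence of the arc $[x_2^a, x_3^a]$. Spatial mixing is the essential tool for this decoupling, and identifying the first-factor limit above requires a careful description of the joint scaling limit of the SLE$_{16/3}$ interface and the surrounding CLE$_{16/3}$ loops in the neighborhood of the fixed boundary point $x_1$.
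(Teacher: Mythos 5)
Your proposal is correct and follows essentially the same route as the paper: the paper's proof likewise establishes existence of the limit and its M\"obius covariance with weight $\tfrac{1}{2}$ at $x_1$ and weight $0$ at $x_2,x_3$ by invoking the machinery of Theorem~\ref{thm::main_boundary}, and then concludes by transitivity of the M\"obius group on ordered boundary triples. Your additional details --- the Edwards--Sokal reduction of $\mathbb{E}_{\mathbb{H}}^{(a,\mathbf{m})}[\sigma_{x_1^a}]$ to a boundary-to-arc FK connection probability, the three-factor decomposition, and the explicit verification that $\sqrt{x_3-x_2}/(\sqrt{x_3-x_1}\sqrt{x_2-x_1})$ obeys the same covariance --- are exactly the steps the paper leaves implicit.
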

\begin{proof}
	One can proceed as in the proof of Theorem~\ref{thm::main_boundary} to show that 
	\begin{align*}
	f_{\mathbb{H}}(x_1;x_2,x_3):=	\lim_{a\to 0} a^{-\frac{1}{2}}\times \mathbb{E}_{\mathbb{H}}^{(a,\mathbf{m})}\left[\sigma_{x_1^a}\right]\in (0,\infty);
	\end{align*}
	moreover, for any M\"obius map $\varphi$ of the upper half-plane with $\varphi(x_j)\neq \infty$ for $1\leq j\leq 3$, we have 
	\begin{align*}
		f_{\mathbb{H}}(\varphi(x_1);\varphi(x_2),\varphi(x_3))=\vert\varphi'(x_1)\vert^{-\frac{1}{2}}\times f_{\mathbb{H}}(x_1;x_2,x_3). 
	\end{align*}
	This M\"obius covariance of $f_{\mathbb{H}}$ implies that there exists a constant $C_{10}\in (0,\infty)$ such that~\eqref{eqn::correlation_one_plus_aux1} holds. 
\end{proof}

\begin{lemma}\label{lem::correlation_one_plus_aux2}
	With the notation of Theorem~\ref{thm::correlation_one_plus}, suppose that $N=2$, then there exists a constant $C_9\in (0,\infty)$ such that 
	\begin{align}  \label{eqn::correlation_one_plus_aux2}
	\lim_{a\to 0} a^{-1}\times \mathbb{E}_{\mathbb{H}}^{(a,\mathbf{m})}\left[\sigma_{x_1^a}\sigma_{x_2^a}\right]=C_9\frac{(x_4-x_1)(x_3-x_2)+(x_4-x_2)(x_3-x_1)}{(x_2-x_1)\sqrt{x_3-x_1}\sqrt{x_4-x_1}\sqrt{x_3-x_2}\sqrt{x_4-x_2}}.
	\end{align}
\end{lemma}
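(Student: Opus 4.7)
The plan is to extend the strategy of Theorem~\ref{thm::main_boundary} and Lemma~\ref{lem::correlation_one_plus_aux1} to this four-point situation, proceeding in three steps: existence, M\"obius covariance, and explicit identification of the remaining cross-ratio function. First, via the Edwards-Sokal coupling, $\mathbb{E}_{\mathbb{H}}^{(a,\mathbf{m})}[\sigma_{x_1^a}\sigma_{x_2^a}]$ equals the probability, in the FK-Ising model on $a(\overline{\mathbb{H}}\cap \mathbb{Z}^2)$ with $[x_3^a,x_4^a]$ wired and free boundary elsewhere, that $x_1^a$ and $x_2^a$ lie in a common cluster after identifying the wired arc to a single super-vertex. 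This event decomposes as the disjoint union of $\{x_1^a\leftrightarrow x_2^a \text{ without using the arc}\}$ and $\{x_1^a\leftrightarrow [x_3^a,x_4^a],\; x_2^a\leftrightarrow[x_3^a,x_4^a]\}$, both of which are boundary-to-boundary crossing events involving four boundary points. Adapting the proof of Theorem~\ref{thm::main_boundary} --- using the spatial mixing property of Lemma~\ref{lem::spatial}, the boundary RSW estimates of~\cite{DuminilCopinHonglerNolinRSWFKIsing}, the convergence to $\mathrm{CLE}_{16/3}$ (Theorem~\ref{thm::CLE}), and the sharp boundary one-arm estimate~\eqref{eqn::scaling_boundary} --- one verifies that each rescaled probability $a^{-1}\mathbb{P}$ admits a well-defined positive limit.

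Next, arguing as in part~\ref{item::thm::cov} of Theorem~\ref{thm::cvg_proba}, with Lemma~\ref{lem::scaling} replaced at the boundary points by~\eqref{eqn::scaling_bd}, one establishes the M\"obius covariance
\[f(\varphi(x_1),\varphi(x_2);\varphi(x_3),\varphi(x_4)) = |\varphi'(x_1)|^{-1/2}|\varphi'(x_2)|^{-1/2} f(x_1,x_2;x_3,x_4)\]
for the limit $f$ in~\eqref{eqn::correlation_one_plus_aux2}. The endpoints $x_3,x_4$ carry no covariance weight because they enter only through the wired arc $[x_3,x_4]$, which itself transforms intrinsically under $\varphi$. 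Consequently, $f$ must factor as
\[f(x_1,x_2;x_3,x_4) = \frac{g(\eta)}{x_2-x_1},\qquad \eta = \frac{(x_2-x_1)(x_4-x_3)}{(x_3-x_1)(x_4-x_2)},\]
where $g$ is a single function of the unique M\"obius invariant.

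The key remaining task is to identify $g$ explicitly; here is where the main obstacle lies, since M\"obius covariance alone leaves a whole function undetermined. To close this gap, I would use the high-temperature (Kac-Ward) expansion of the Ising boundary spin correlation with mixed boundary condition, in the spirit of~\cite[Theorem~3.1]{IzyurovObservableFree} and the high/low-temperature expansion lemmas (Lemmas~\ref{lem::high_tempe}--\ref{lem::low_tempe}) established later in the paper. The expansion rewrites $\mathbb{E}_{\mathbb{H}}^{(a,\mathbf{m})}[\sigma_{x_1^a}\sigma_{x_2^a}]$ in terms of a discrete $s$-holomorphic observable on the medial lattice, whose scaling limit is an explicit holomorphic function on $\mathbb{H}$ satisfying a martingale property along the $\mathrm{SLE}_3$ interface connecting $x_3$ to $x_4$. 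This, equivalently, can be phrased as a BPZ-type hypergeometric ODE for $g(\eta)$ (natural because $\sigma$ is a level-two degenerate Virasoro primary of boundary weight $1/2$), with boundary data at $\eta\to 0$ and $\eta\to 1$ fixed by the OPE of two spin operators and by the fusion of a spin with a boundary-condition-changing insertion, respectively. Solving this ODE yields
\[g(\eta) = C_9\cdot\frac{2-\eta}{\sqrt{1-\eta}},\]
which, after re-expressing $\eta$ in terms of $x_1,x_2,x_3,x_4$, reproduces the right-hand side of~\eqref{eqn::correlation_one_plus_aux2}. The genuine difficulty, and the only place where the proof is not a straightforward extension of the techniques already developed in Sections~\ref{sec::con::int}--\ref{sec::con::bou}, is the rigorous identification of the scaling limit of the discrete $s$-holomorphic observable with the continuum $\mathrm{SLE}_3$ partition function; this is precisely the point at which Smirnov-type discrete complex analysis must enter the argument.
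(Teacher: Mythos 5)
Your first two steps (existence of the limit via the Theorem~\ref{thm::main_boundary} machinery, and M\"obius covariance with weight $\tfrac12$ at $x_1,x_2$ and weight $0$ at $x_3,x_4$, leaving an undetermined function $g$ of the cross-ratio) coincide with the paper's, and you correctly identify the crux: covariance alone cannot pin down $g$, and some discrete complex analysis must enter. The gap is in how you propose to close this. Your plan --- derive a BPZ-type hypergeometric ODE for $g$ from a martingale property of an s-holomorphic observable along the $\mathrm{SLE}_3$ interface, fix the two integration constants ``by the OPE of two spin operators and by the fusion of a spin with a boundary-condition-changing insertion,'' and solve --- is not carried out, and as sketched it is not a proof. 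The ODE is never derived from the lattice model (that derivation is itself the hard analytic content you defer), and the boundary data at $\eta\to0,1$ is pure CFT heuristics: to make it rigorous you would need actual asymptotics of the limiting function as the marked points collide, uniformly enough to justify exchanging the $a\to0$ limit with the degeneration, and none of that is supplied. So the identification of $g(\eta)=C_9(2-\eta)/\sqrt{1-\eta}$ (which does match the stated formula) is asserted rather than proved.

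The paper closes the argument by a different and entirely exact mechanism, with no ODE, no martingale, and no OPE input. Working in a finite rectangle $\Omega_M$ (and transferring back to $\mathbb{H}$ by covariance), the high-temperature expansion (Lemma~\ref{lem::high_tempe}) gives the lattice identities $\mathbb{E}^{(a,\mathbf{m})}_{\Omega_M}[\sigma_{y_1^a}\sigma_{y_2^a}]=Z(\overline{\Omega}_M^a;\{y_1^a,y_2^a\})/Z(\overline{\Omega}_M^a)$ and $\mathbb{E}^{(a,\mathbf{m})}_{\Omega_M}[\sigma_{y_1^a}]=Z(\overline{\Omega}_M^a;\{y_1^a,y_3^a-\ii a\})/Z(\overline{\Omega}_M^a)$, so the two-point function is the one-point function (already explicit from Lemma~\ref{lem::correlation_one_plus_aux1}, where three marked points make covariance sufficient) multiplied by the ratio $Z(\overline{\Omega}_M^a;\{y_1^a,y_2^a\})/Z(\overline{\Omega}_M^a;\{y_1^a,y_3^a-\ii a\})$. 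By Izyurov's identity this ratio equals, up to an explicit constant, $|F^a(b_2^a)|$, the free-boundary fermionic observable evaluated at the boundary point $y_2^a$, and its scaling limit is the \emph{explicit} function of \cite{IzyurovObservableFree} (boundary convergence handled as in Chelkak--Izyurov); this is Lemma~\ref{lem::low_tempe}, and the formula drops out directly. In short, where you would re-derive and solve a differential equation with heuristic boundary conditions, the paper quotes an already-established explicit scaling limit of a lattice observable and an exact combinatorial identity relating it to the correlation ratio; that substitution is what makes the proof complete.
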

We note that the expression on the right-hand side of~\eqref{eqn::correlation_one_plus_aux2} is the partition function of some $\mathrm{SLE}_3$ variant (see~\cite[Section~3]{IzyurovObservableFree}).
The proof of Lemma~\ref{lem::correlation_one_plus_aux2} is more involved and we postpone it to the next section.

\begin{proof}[Proof of Theorem~\ref{thm::correlation_one_plus}]
The relation~\eqref{eqn::scaling_corr_one_plus} follows directly from Lemmas~\ref{lem::Paffian_mixed}-\ref{lem::correlation_one_plus_aux2}. 

It remains to show that the function $\LR_N$ defined by~\eqref{eqn::def_Ising_total_par_even}-\eqref{eqn::def_Ising_total_par_odd} satisfies the PDEs~\eqref{eqn::BPZ_one_plus}. Indeed, as a special case of~\cite[Theorem~3.1]{IzyurovObservableFree}, the function $\mathcal{R}_N$ is the partition function of certain local multiple $\mathrm{SLE}_3$ paths. Then, the PDEs~\eqref{eqn::BPZ_one_plus} follow from the commutation relations~\cite[Theorem~7]{DubedatCommutationSLE}, see also~\cite[Appendix~A]{KytolaPeltolaPurePartitionFunctions}. 
\end{proof}

\subsection{Proof of Lemma~\ref{lem::correlation_one_plus_aux2}}
With the notation of Theorem~\ref{thm::correlation_one_plus}, suppose that $N=2$. One can proceed as in the proof of Theorem~\ref{thm::main_boundary} to show that 
\begin{align*}
	f_{\mathbb{H}}(x_1,x_2;x_3,x_4):=\lim_{a\to 0}a^{-1}\times\mathbb{E}_{\mathbb{H}}^{(a,\mathbf{m})}\left[\sigma_{x_1^a}\sigma_{x_2^a}\right]\in(0,\infty); 
\end{align*}
moreover, for any M\"obius map $\varphi$ of the upper half-plane with $\varphi(x_j)\neq \infty$ for $1\leq j\leq 4$, we have 
\begin{align*}
	f_{\mathbb{H}}(\varphi(x_1),\varphi(x_2);\varphi(x_3),\varphi(x_4))=\vert\varphi'(x_1)\vert^{-\frac{1}{2}} \vert\varphi'(x_2)\vert^{-\frac{1}{2}}\times f_{\mathbb{H}}(x_1,x_2;x_3,x_4).
\end{align*}
However, this M\"obius covariance property is not sufficient to specify the functional form of $f_{\mathbb{H}}(x_1,x_2;x_3,x_4)$. Instead, we adopt the following strategy:
\begin{itemize}
	\item First, at the critical point, using the high-temperature expansion, we relate the correlation $\mathbb{E}_{\mathbb{H}}^{(a,\mathbf{m})}[\sigma_{x_1^a}\sigma_{x_2^a}]$ to the low-temperature expansion of the Ising model on the dual graph;
	\item Second, using the integrability result of Smirnov's Ising fermionic observable for free boundary conditions studied in~\cite{IzyurovObservableFree}, we figure out the scaling limit of the low-temperature expansion of the Ising model on the dual graph in the first step. 
\end{itemize}
To this end, we need to consider the Ising model on a finite domain first. 

Let $M>0$ and let $\varphi$ be a conformal map from $\mathbb{H}$ onto $[-M,M]\times [0,M]$ with $-M<\varphi(x_1)<\varphi(x_2)<\varphi(x_3)<\varphi(x_4)<M$. Write $y_j=\varphi(x_j)$ for $1\leq j\leq 4$.  Define $\Omega_M:=[-M,M]\times [0,M]$ and $\Omega^a_M:=\Omega_M\cap a\mathbb{Z^2}$. Let $y_j^a\in \partial \Omega^a_M\cap \mathbb{R}$ satisfy $\lim_{a\to 0}y_j^a=y_j$ for $1\leq j\leq 4$. We consider the critical Ising model on $\Omega^a_M$ with the following mixed boundary conditions:
\begin{align*}
	\oplus\text{ next to }[y_3^ay_4^a],\quad \text{and}\quad \text{free next to }\partial\Omega_M^a\setminus [y_3^ay_4^a],
\end{align*}
and we denote by $\mathbb{E}_{\Omega_M}^{(a,\mathbf{f})}$ the corresponding expectation. 

We now introduce some notation that will be used to define the high-temperature expansion and the Ising fermionic observable for free boundary conditions initially introduced in~\cite[Section~2]{IzyurovObservableFree}. 
Define $\overline{\Omega}_{M}^a$ to be the graph whose vertex set $V(\overline{\Omega}_M^a)$ equals 
\[ 
V(\Omega_{M}^a)\cup \big([y_3^ay_4^a]-\ii a\big), \; \text{ where } \big([y_3^ay_4^a]-\ii a\big) := \{w\notin V(\Omega_M^a): \exists v\in [y_3^ay_4^a]\text{ such that }v\sim w\}, \]
and whose edge set consists of edges in $a\mathbb{Z}^2$ connecting vertices in $V(\overline{\Omega}_M^a)$. 

For each vertex $v$ of $\overline{\Omega}_M^{a}$, we add four vertices $c_j$ at $v+\frac{\sqrt{2}a}{4}\exp(\frac{\ii \pi}{4}+\frac{\ii \pi}{2}j)$, $j=0,1,2,3$, and connect each $c_j$ by an edge to $v$; the four vertices are called \textit{corners} and the corresponding edges are called \textit{corner edges}. We add a vertex to the midpoint of each edge on $a\mathbb{Z}^2$. We will often identify a corner edge with the corresponding corner, and identify an edge of $\overline{\Omega}_M^a$ with its midpoint. By a \textit{discrete outer normal} at a vertex $v\in \partial \overline{\Omega}_M^a$, we mean an oriented edge connecting $v$ to a corner or to a midpoint adjacent to $v$ but not in $\overline{\Omega}^a_M$, pointing away from $v$. We will often identify a discrete outer normal with the corresponding corner or midpoint. Denote by $\tilde{V}(\overline{\Omega}^a_M)$  the set of vertices in $\overline{\Omega}^a_M$, together with the midpoints and corners adjacent to $\overline{\Omega}^a_M$. Denote by $\tilde{E}(\overline{\Omega}^a_M)$ the set of primal edges, half-edges, corners, and discrete outer normals of $\overline{\Omega}^a_M$. Define the weights $\mathrm{w}_e$  for $e\in \tilde{E}(\overline{\Omega}^a_M)$ by
\[\mathrm{w}_e:=\begin{cases}
	\sqrt{2}-1, & \text{if $e$ is an edge in $a\mathbb{Z}^2$};\\
	(\sqrt{2}-1)^{\frac{1}{2}},  & \text{if $e$ is a half-dege};\\
	(\sqrt{2}-1)^{\frac{1}{2}}\cos(\frac{\pi}{8}),  & \text{if $e$ is a corner edge.}
\end{cases}\] 
For $m\geq 0$ and distinct elements $z_1,\ldots,z_{2m}\in \tilde{V}(\overline{\Omega}_M^a)$, denote by $\text{Conf}(\overline{\Omega}_M^a;\{z_1,z_2,\ldots,z_{2m}\})$  the set of all subsets $S$ of $\tilde{E}(\overline{\Omega}_M^a)$ such that all generalized vertices in $\tilde{V}(\overline{\Omega}_M^a)$, except for $z_1,\ldots,z_{2m}$,  have an even degree in $S$, and write
\begin{align*}
	Z(\overline{\Omega}_M^a;\{z_1,z_2,\ldots,z_m\}):=\sum_{S\in \text{Conf}(\overline{\Omega}_M^a;\{z_1,z_2,\ldots,z_{2m}\})} \prod_{e\in S\setminus \big([y_3^ay_4^a]-\ii a\big)} \mathrm{w}_e. 
\end{align*}

\paragraph*{High-temperature expansion for the mixed boundary condition.} 
Let $A\subseteq V(\Omega_M^a)$ with cardinality $\#A\geq 1$ and write $\sigma_A:=\prod_{v\in A}\sigma_v$. It follows from our definitions that 
\begin{align} \label{eqn::high_tempe_one_two_aux}
	\mathbb{E}_{\Omega_M}^{(a,\mathbf{m})} [\sigma_A]=\frac{\sum_{\sigma\in \{\pm 1\}^{V(\Omega_M^a)}}\sigma_A\exp\big(\beta\sum_{\langle v,w\rangle \in E(\Omega_M^a)}\sigma_v\sigma_w+\beta\sum_{v\sim w\atop v\in \Omega_M^a,w\in ([y^a_3,y^a_4]-\ii a)}\sigma_v\big)}{\sum_{\sigma\in \{\pm1\}^{V(\Omega_M^a)}}\exp\Big(\beta\sum_{\langle v,w\rangle \in E(\Omega_M^a)}\sigma_v\sigma_w+\beta\sum_{v\sim w\atop v\in \Omega_M^a,w\in ([y^a_3,y^a_4]-\ii a)}\sigma_v\Big)}.
\end{align}

\begin{lemma} \label{lem::high_tempe}
	Let $A\subseteq V(\Omega_M^a)$, then we have 
	\begin{align}\label{eqn::high_tempe}
\mathbb{E}_{\Omega_{M}}^{(a,\mathbf{m})}[\sigma_A]= \begin{cases}
	Z(\overline{\Omega}_M^a;A)/Z(\overline{\Omega}_M^a),& \text{if }\#A \text{ is even},\\
	Z(\overline{\Omega}_M^a;A\cup \{y_3^a-\ii a\})/Z(\overline{\Omega}_M^a),& \text{if }\#A \text{ is odd}.
\end{cases}
	\end{align}
In particular, we have 
	\begin{align*}
				\mathbb{E}_{\Omega_M}^{(a,\mathbf{m})} [\sigma_{y_1^a}]= \frac{Z(\overline{\Omega}_M^a;\{y_1^a,y_3^a-\ii a\})}{Z(\overline{\Omega}_M^a)},\quad
				\mathbb{E}_{\Omega_M^a}^{(a,\mathbf{m})} [\sigma_{y_1^a}\sigma_{y_2^a}]= \frac{Z(\overline{\Omega}_M^a;\{y_1^a,y_2^a\})}{Z(\overline{\Omega}_M^a)}.
	\end{align*}
\end{lemma}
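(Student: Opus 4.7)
The identity is the standard high-temperature (even-subgraph) expansion of the Ising model, adapted to the mixed $\oplus$/free boundary condition. The first step is to implement the $\oplus$-wiring along $[y_3^a y_4^a]$ by introducing ghost vertices $w \in [y_3^a y_4^a] - \ii a$ with fixed spin $\sigma_w \equiv +1$, converting the boundary field $\exp(\beta_c \sigma_v)$ along the wired arc into an ordinary two-body coupling $\exp(\beta_c \sigma_v \sigma_w)$ on the extended graph $\overline{\Omega}_M^a$. This is precisely the setup of~\cite{IzyurovObservableFree}, whose weights $\mathrm{w}_e$ on $\tilde E(\overline{\Omega}_M^a)$ I will reproduce.

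Next, I apply the identity $\exp(\beta \sigma \sigma') = \cosh(\beta)(1 + \tanh(\beta)\sigma \sigma')$ to every edge factor, expand the resulting product as a sum over edge subsets $S$, and sum out the real spins via $\sum_{\sigma = \pm 1} \sigma^k = 2 \cdot \mathbb{1}_{\{k \text{ even}\}}$. This forces $\deg_S(v) + \mathbb{1}_{v \in A}$ to be even at every real vertex $v \in V(\Omega_M^a)$. At the critical inverse temperature $\beta_c$, $\tanh(\beta_c) = \sqrt{2} - 1$, matching the full-edge weight $\mathrm{w}_e$; the half-edge and corner-edge refinements arise from the canonical combinatorial splitting of full edges and vertices used in the s-holomorphic framework of~\cite{IzyurovObservableFree}. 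The exclusion of ghost edges in $\prod_{e \in S \setminus ([y_3^a y_4^a] - \ii a)} \mathrm{w}_e$ is natural because these edges encode the fixed-spin boundary interaction, and their contribution cancels between numerator and denominator together with the overall prefactor $(2\cosh\beta_c)^{|V(\Omega_M^a)|}$. This yields the even-$|A|$ case
\begin{equation*}
\mathbb{E}_{\Omega_M}^{(a,\mathbf{m})}[\sigma_A] = \frac{Z(\overline{\Omega}_M^a; A)}{Z(\overline{\Omega}_M^a)}.
\end{equation*}

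The odd-$|A|$ case reduces to the even case via the trivial identity $\sigma_{y_3^a - \ii a} \equiv +1$: I write $\mathbb{E}_{\Omega_M}^{(a,\mathbf{m})}[\sigma_A] = \mathbb{E}_{\Omega_M}^{(a,\mathbf{m})}\bigl[\sigma_A \, \sigma_{y_3^a - \ii a}\bigr]$, and apply the previous step to the even-cardinality source set $A \cup \{y_3^a - \ii a\}$. This gives the second line of~\eqref{eqn::high_tempe}, and the particular choice of $y_3^a - \ii a$ is immaterial: any other ghost vertex would yield the same expectation by the same argument, which is consistent with the equality of the corresponding $Z$-values obtained by flipping ghost-edge occupancies along the wired strip.

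\textbf{The main obstacle} lies in matching the refined weights $\mathrm{w}_e$ on $\tilde E(\overline{\Omega}_M^a)$ — in particular the factor $\cos(\pi/8)$ at corner edges and the $(\sqrt{2}-1)^{1/2}$ at half-edges — with the naive high-temperature expansion on $\overline{\Omega}_M^a \subset a\mathbb{Z}^2$. The refined graph is designed so that the sum over even-subgraph configurations weighted by the $\mathrm{w}_e$ reproduces exactly the high-temperature partition function on the original graph; the key trigonometric identity at each vertex is the same one used to define Smirnov's fermionic observable~\cite{SmirnovHolomorphicFermion, IzyurovObservableFree}, and reconciling it with the even-degree constraint imposed on ghost vertices by $\mathrm{Conf}$ requires careful bookkeeping of the four-choice decomposition at each vertex of $\overline{\Omega}_M^a$.
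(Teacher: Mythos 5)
Your overall strategy (ghost vertices along the wired arc, the identity $e^{\beta\sigma\sigma'}=\cosh\beta\,(1+\tanh\beta\,\sigma\sigma')$, summation over spins to impose parity constraints, and the reduction of the odd case via $\sigma_{y_3^a-\ii a}\equiv +1$) is the same high-temperature expansion used in the paper, and the corner/half-edge weights you flag as the ``main obstacle'' are in fact harmless here: since the marked points in \eqref{eqn::high_tempe} are all primal or ghost vertices, no corner edge or outer normal can appear in an admissible configuration, and the two half-edges of an unmarked edge carry total weight $\big((\sqrt2-1)^{1/2}\big)^2=\sqrt2-1$, so the refined graph reduces to the ordinary one for this lemma.

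The genuine gap is elsewhere. With all ghost spins frozen at $+1$, summing out the spins imposes the parity constraint only at the \emph{real} vertices $v\in V(\Omega_M^a)$, and the resulting sum runs over subsets of $E(\Omega_M^a)$ together with the vertical arc-to-ghost edges; no ghost-row (horizontal) edges occur at all. By contrast, $\mathrm{Conf}(\overline{\Omega}_M^a;\cdot)$ demands even degree at \emph{every} unmarked generalized vertex, including the ghost vertices, and the configurations $S$ may contain the ghost-row edges, which are excluded from the weight product (equivalently, carry weight $1$). These two sums are not termwise identical, and the identification is not a ``cancellation between numerator and denominator'': one must check that, for each admissible choice of the vertical-edge occupancies, there is \emph{exactly one} subset of the ghost-row edges meeting the parity constraints at the ghost vertices (the boundary map on edge subsets of a path is injective), that this forces the number of occupied vertical edges to have the parity of $\#A$ (handshake lemma), and that the odd-degree defect then lands precisely at the marked ghost vertex $y_3^a-\ii a$ when $\#A$ is odd. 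The paper sidesteps this bookkeeping with a different device: it makes the ghost spins dynamical, couples them to one another at inverse temperature $\beta$, fixes only $\sigma_{y_3^a-\ii a}=+1$, and lets $\beta\to+\infty$ (see \eqref{eqn::high_tempe_one_two_aux0}); the spin sum then produces the even-degree constraint at the ghost vertices automatically, the ghost-row edges acquire weight $\tanh\beta\to 1$ as required by the definition of $Z$, and the $\beta\to\infty$ limit recovers the $\oplus$ boundary condition. Your argument can be completed by supplying the combinatorial step above, but as written it asserts the identification with $Z(\overline{\Omega}_M^a;\cdot)$ without justifying it.
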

\begin{proof}
Throughout the proof, we let $\sigma_{y_3^a-\ii a}=1$. Now we express~\eqref{eqn::high_tempe_one_two_aux} in a different way:
\begin{align} \label{eqn::high_tempe_one_two_aux0}
	\mathbb{E}_{\Omega_{M}}^{(a,\mathbf{m})}[\sigma_A]=\lim_{\beta\to +\infty}\frac{\sum_{\sigma\in \{\pm 1\}^{V(\overline{\Omega}_M^a\setminus \{y_3^a-\ii a\})}}\sigma_A \exp \big(\beta_c \sum_{\langle v,w\rangle \in E(\overline{\Omega}_M^a)\setminus ([y_3^ay_4^a]-\ii a )}\sigma_v\sigma_w+\beta \sum_{\langle v,w\rangle\in ([y_3^ay_4^a]-\ii a)}\sigma_{v}\sigma_w\big)}{\sum_{\sigma\in \{\pm 1\}^{V(\overline{\Omega}_M^a\setminus \{y_3^a-\ii a\})}}\exp \big(\beta_c \sum_{\langle v,w\rangle \in E(\overline{\Omega}_M^a)\setminus ([y_3^ay_4^a]-\ii a )}\sigma_v\sigma_w+\beta \sum_{\langle v,w\rangle\in ([y_3^ay_4^a]-\ii a)}\sigma_{v}\sigma_w\big)}.
\end{align} 

Note that for $\sigma_v,\sigma_w\in \{\pm 1\}$, we have
\begin{align}\label{eqn::high_tempe_aux1}
	\exp(\beta\sigma_v\sigma_w)= \cosh(\beta)\left[1+\tanh(\beta)\sigma_v\sigma_w\right],\quad \tanh(\beta_c)=\sqrt{2}-1,\quad \lim_{\beta\to +\infty}\tanh(\beta)=1. 
\end{align}
As a consequence of~\eqref{eqn::high_tempe_aux1}, we have
\begin{align} \label{eqn::high_tempe_one_two_aux1}
\begin{split}
		&\sum_{\sigma\in \{\pm 1\}^{V(\overline{\Omega}_M^a\setminus \{y_3^a-\ii a\})}}\sigma_A \exp \big(\beta_c \sum_{\langle v,w\rangle \in E(\overline{\Omega}_M^a)\setminus ([y_3^ay_4^a]-\ii a )}\sigma_v\sigma_w+\beta \sum_{\langle v,w\rangle\in ([y_3^ay_4^a]-\ii a)}\sigma_{v}\sigma_w\big)\\
	&\quad= \sum_{S\subseteq E(\overline{\Omega}_M^a)} (\sqrt{2}-1)^{\# S\setminus ([y_3^ay_4^a]-\ii a)} \tanh(\beta)^{\# S\cap ([y_3^ay_4^a]-\ii a)}\sum_{\sigma\in \{\pm1\}^{V(\overline{\Omega}_M^a)\setminus \{y_3^a-\ii a\}}}\sigma_A\prod_{\langle v,w\rangle}\sigma_v\sigma_w\\
	&\quad\quad\quad\times \cosh(\beta_c)^{\# E(\overline{\Omega}_M^a)\setminus ([y_3^ay_4^a]-\ii a)} \cosh(\beta)^{\#([y_3^ay_4^a]-\ii a)}. 
\end{split}
\end{align}
Note that, if $\# A$ is even, then
\begin{align*}
	\sum_{\sigma\in \{\pm1\}^{V(\overline{\Omega}_M^a\setminus \{y_3^a-\ii a\})}}\sigma_A\prod_{\langle v,w\rangle}\sigma_v\sigma_w=\begin{cases}
		2^{\# V(\overline{\Omega}_M^a\setminus \{y_3^a-\ii a\})},&\text{if }S\in \mathrm{Conf}(\overline{\Omega}_M^a; A),\\
		0,& \text{otherwise};
	\end{cases}
\end{align*}
if $\# A$ is odd, then 
\begin{align*}
	\sum_{\sigma\in \{\pm1\}^{V(\overline{\Omega}_M^a\setminus \{y_3^a-\ii a\})}}\sigma_A\prod_{\langle v,w\rangle}\sigma_v\sigma_w=\begin{cases}
		2^{\# V(\overline{\Omega}_M^a\setminus \{y_3^a-\ii a\})},&\text{if }S\in \mathrm{Conf}(\overline{\Omega}_M^a; A,y_3^a-\ii a),\\
		0,& \text{otherwise}.
	\end{cases}
\end{align*}
Plugging these two observations into~\eqref{eqn::high_tempe_one_two_aux1} shows that, if $\# A$ is even, then 
\begin{align} \label{eqn::high_tempe_one_two_aux2}
	\begin{split}
		&\sum_{\sigma\in \{\pm 1\}^{V(\overline{\Omega}_M^a\setminus \{y_3^a-\ii a\})}}\sigma_A \exp \big(\beta_c \sum_{\langle v,w\rangle \in E(\overline{\Omega}_M^a)\setminus ([y_3^ay_4^a]-\ii a )}\sigma_v\sigma_w+\beta \sum_{\langle v,w\rangle\in ([y_3^ay_4^a]-\ii a)}\sigma_{v}\sigma_w\big)\\
		&\quad=\cosh(\beta_c)^{\# E(\overline{\Omega}_M^a)\setminus ([y_3^ay_4^a]-\ii a)} \cosh(\beta)^{\#([y_3^ay_4^a]-\ii a)} 2^{\# V(\overline{\Omega}_M^a\setminus \{y_3^a-\ii a\})}\\
		&\quad\quad \times \sum_{S\in \mathrm{Conf}(\overline{\Omega}_M^a;A)} (\sqrt{2}-1)^{\# S\setminus ([y_3^ay_4^a]-\ii a)} \tanh(\beta)^{\# S\cap([y_3^ay_4^a]-\ii a)};
	\end{split}
\end{align}
if $\# A$ is odd, then 
\begin{align} \label{eqn::high_tempe_one_two_aux3}
	\begin{split}
		&\sum_{\sigma\in \{\pm 1\}^{V(\overline{\Omega}_M^a\setminus \{y_3^a-\ii a\})}}\sigma_A \exp \big(\beta_c \sum_{\langle v,w\rangle \in E(\overline{\Omega}_M^a)\setminus ([y_3^ay_4^a]-\ii a )}\sigma_v\sigma_w+\beta \sum_{\langle v,w\rangle\in ([y_3^ay_4^a]-\ii a)}\sigma_{v}\sigma_w\big)\\
		&\quad=\cosh(\beta_c)^{\# E(\overline{\Omega}_M^a)\setminus ([y_3^ay_4^a]-\ii a)} \cosh(\beta)^{\#([y_3^ay_4^a]-\ii a)} 2^{\# V(\overline{\Omega}_M^a\setminus \{y_3^a-\ii a\})}\\
		&\quad\quad \times \sum_{S\in \mathrm{Conf}(\overline{\Omega}_M^a;A,y_3^a-\ii a)} (\sqrt{2}-1)^{\# S\setminus ([y_3^ay_4^a]-\ii a)} \tanh(\beta)^{\# S\cap([y_3^ay_4^a]-\ii a)}.
	\end{split}
\end{align}
Plugging~\eqref{eqn::high_tempe_one_two_aux2} and~\eqref{eqn::high_tempe_one_two_aux3} into~\eqref{eqn::high_tempe_one_two_aux0} gives~\eqref{eqn::high_tempe} and completes the proof. 
\end{proof}

Recall that $\varphi$ is a conformal map from $\mathbb{H}$ onto $[-M,M]\times [0,M]$ with $y_j=\varphi(x_j)$ for $1\leq j\leq 4$. 
\begin{lemma} \label{lem::low_tempe}
There exists a constant $C_{11}\in (0,\infty)$ such that
	\begin{align} \label{eqn::low_tempe}
		\lim_{a\to 0} a^{-\frac{1}{2}} \times \frac{Z(\overline{\Omega}_M^a;\{y_1^a,y_2^a\})}{Z(\overline{\Omega}_M^a;\{y_1^a,y_3^a-\ii a\})}= C_{11} \vert\varphi'(x_2) \vert^{-\frac{1}{2}}\times\frac{(x_4-x_1)(x_3-x_2)+(x_4-x_2)(x_3-x_1)}{(x_2-x_1)\sqrt{x_3-x_2}\sqrt{x_4-x_2}\sqrt{x_4-x_3}}.
	\end{align}
\end{lemma}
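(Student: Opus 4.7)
The plan is to recognize the left-hand side of~\eqref{eqn::low_tempe} as the scaling limit of a discrete Ising fermionic observable on the enlarged domain $\overline{\Omega}_M^a$, then apply Izyurov's convergence theorem to obtain an explicit continuum limit on $\Omega_M$, and finally transfer the identity to the upper half-plane via the conformal map $\varphi$. Concretely, fixing $y_1^a$ as a source and letting the second marked vertex vary, I would consider
\[
F^a(z) \;:=\; \frac{Z(\overline{\Omega}_M^a;\{y_1^a,z\})}{Z(\overline{\Omega}_M^a;\{y_1^a,y_3^a-\ii a\})},\qquad z\in \tilde V(\overline{\Omega}_M^a).
\]
By Lemma~\ref{lem::high_tempe}, this is (up to the explicit weights $\mathrm{w}_e$ on edges, half-edges and corners introduced above) the ratio of free-boundary spin correlations $\mathbb{E}^{(a,\mathbf{f})}_{\overline{\Omega}_M}[\sigma_{y_1^a}\sigma_z]/\mathbb{E}^{(a,\mathbf{f})}_{\overline{\Omega}_M}[\sigma_{y_1^a}\sigma_{y_3^a-\ii a}]$, and as such it should coincide with the discrete Ising fermionic observable of~\cite{SmirnovHolomorphicFermion,IzyurovObservableFree} with free boundary conditions, source at $y_1^a$ and reference point $y_3^a-\ii a$; in particular, $F^a$ is expected to be discrete s-holomorphic on $\overline{\Omega}_M^{a,\diamond}$ away from $y_1^a$ and $y_3^a-\ii a$ and to satisfy the usual half-integer Riemann boundary conditions on the free arc of $\partial\overline{\Omega}_M^a$.

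Next, I would apply the convergence theorem~\cite[Theorem~3.1]{IzyurovObservableFree} (adapted to the present two-insertion setup in the spirit of Chelkak--Smirnov) to obtain
\[
\lim_{a\to 0}\, a^{-1/2}\,F^a(y_2^a) \;=\; F_{\Omega_M}\bigl(y_2;\,y_1,y_3,y_4\bigr),
\]
where $F_{\Omega_M}$ is the continuum Ising fermionic observable: the unique (up to a multiplicative constant) holomorphic function on $\Omega_M$ satisfying the appropriate Riemann-type boundary conditions on $\partial\Omega_M$ with prescribed singular behavior at $y_1,y_3,y_4$; equivalently, $F_{\Omega_M}$ is proportional to the $\mathrm{SLE}_3$ partition function associated with two boundary spins on the free arc together with a wired arc joining $y_3$ to $y_4$. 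Boundary regularity at $y_2^a$ is automatic because $\partial\overline{\Omega}_M^a$ is flat there, so convergence extends up to (and including) $y_2^a$ by the same argument as in the proof of Lemma~\ref{lem::one_arm_bd_aux2}. Since $F_{\Omega_M}$ transforms as a holomorphic $1/2$-form in its first argument, we have
\[
F_{\Omega_M}(\varphi(x_2);\varphi(x_1),\varphi(x_3),\varphi(x_4)) \;=\; |\varphi'(x_2)|^{-1/2}\,F_{\HH}(x_2;x_1,x_3,x_4),
\]
and the explicit half-plane formula for the corresponding $\mathrm{SLE}_3$ partition function on $\HH$ with alternating free/wired/free boundary conditions (see~\cite[Section~3]{IzyurovObservableFree}) reads
\[
F_{\HH}(x_2;x_1,x_3,x_4) \;=\; C_{11}\,\frac{(x_4-x_1)(x_3-x_2)+(x_4-x_2)(x_3-x_1)}{(x_2-x_1)\sqrt{x_3-x_2}\sqrt{x_4-x_2}\sqrt{x_4-x_3}}
\]
for a normalization constant $C_{11}\in(0,\infty)$ determined by the choice of reference at $y_3^a-\ii a$. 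Combining these three steps then yields~\eqref{eqn::low_tempe}.

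The main technical obstacle will be the first step: carefully matching the combinatorial ratio $F^a$, with its specific weights $\mathrm{w}_e$ on edges, half-edges, corners and discrete outer normals, with Izyurov's normalized fermionic observable, so as to simultaneously identify the correct $a^{-1/2}$ scaling and pin down the constant $C_{11}$. Once this bookkeeping is complete, the convergence and the conformal-covariance steps will follow essentially directly from the results of~\cite{IzyurovObservableFree}, and the scaling consistency can be cross-checked against the known ratio $\mathcal{R}_2/\mathcal{R}_1$ arising from Lemmas~\ref{lem::correlation_one_plus_aux1} and~\ref{lem::Paffian_mixed}.
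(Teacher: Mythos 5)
Your proposal follows essentially the same route as the paper: identify the ratio of partition functions with (the boundary value of) Izyurov's free-boundary fermionic observable, invoke his convergence result together with the flat-boundary regularity argument at $y_2^a$ in the style of Chelkak--Izyurov, and read off the explicit half-plane formula using the $|\varphi'(x_2)|^{-1/2}$ covariance of the observable. The one point to be careful about --- precisely the bookkeeping you flag --- is that the combinatorial ratio $Z(\overline{\Omega}_M^a;\{y_1^a,z\})/Z(\overline{\Omega}_M^a;\{y_1^a,y_3^a-\ii a\})$ does \emph{not} coincide with the fermionic observable in the bulk (the observable carries the winding phase $\exp(-\ii W(S)/2)$ and is complex-valued); the identification holds only through the modulus at the boundary discrete outer normal $b_2^a$, via Izyurov's Eq.~(1.7), which is exactly how the paper closes the argument.
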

We postpone the proof of Lemma~\ref{lem::low_tempe} to the end of this section. With Lemmas~\ref{lem::correlation_one_plus_aux1},~\ref{lem::high_tempe} and~\ref{lem::low_tempe} at hand, we are ready to prove Lemma~\ref{lem::correlation_one_plus_aux2}.

\begin{proof}[Proof of Lemma~\ref{lem::correlation_one_plus_aux2}]
On the one hand, one can proceed as in the proof of Theorem~\ref{thm::main_boundary} to show that 
\begin{align}
	\lim_{a\to 0}a^{-\frac{1}{2}}\times \mathbb{E}_{\Omega_M}^{(a,\mathbf{m})}[\sigma_{y_1^a}]=&\vert \varphi'(x_1)\vert^{-\frac{1}{2}} \times \lim_{a\to 0} a^{-\frac{1}{2}}\times \mathbb{E}_{\mathbb{H}}^{(a,\mathbf{m})} [\sigma_{x_1^a}], \label{eqn::cov_one_point}\\
	\lim_{a\to 0}a^{-1}\times \mathbb{E}_{\Omega_M}^{(a,\mathbf{m})}[\sigma_{y_1^a}\sigma_{y_2^a}]=&\vert \varphi'(x_1)\vert^{-\frac{1}{2}} \vert \varphi'(x_2)\vert^{-\frac{1}{2}} \times \lim_{a\to 0} a^{-1}\times \mathbb{E}_{\mathbb{H}}^{(a,\mathbf{m})} [\sigma_{x_1^a}\sigma_{x_2^a}].  \label{eqn::cov_two_point}
\end{align}	
	
On the other hand, thanks to Lemma~\ref{lem::high_tempe}, we can write
\begin{align*}
a^{-1}\times \mathbb{E}_{\Omega_M}^{(a,\mathbf{m})}[\sigma_{y_1^a}\sigma_{y_2^a}]= a^{-\frac{1}{2}}\times \mathbb{E}_{\Omega_M^a}^{(a,\mathbf{m})}[\sigma_{y_1^a}]\times a^{-\frac{1}{2}} \times \frac{Z(\overline{\Omega}_M^a;\{y_1^a,y_2^a\})}{Z(\overline{\Omega}_M^a;\{y_1^a,y_3^a-\ii a\})}.
\end{align*}
According to Lemma~\ref{lem::correlation_one_plus_aux1}, we have 
\begin{align*}
\lim_{a\to 0}a^{-\frac{1}{2}}\times \mathbb{E}_{\mathbb{H}}^{(a,\mathbf{m})}[\sigma_{x_1^a}]= C_{10} \frac{\sqrt{x_4-x_3}}{\sqrt{x_4-x_1}\sqrt{x_3-x_1}},
\end{align*}
where $C_{10}$ is the constant in Lemma~\ref{lem::correlation_one_plus_aux1}.
Combining these with~\eqref{eqn::cov_one_point}, Lemma~\ref{lem::low_tempe} gives 
\begin{align} \label{eqn::two_point_finite_domain}
	\lim_{a\to 0} a^{-1}\times \mathbb{E}_{\Omega_M}^{(a,\mathbf{m})}[\sigma_{y_1^a}\sigma_{y_2^a}]=C_{10}C_{11} \vert\varphi'(x_1)\vert^{-\frac{1}{2}}\vert\varphi'(x_2)\vert^{-\frac{1}{2}}\times \frac{(x_4-x_1)(x_3-x_2)+(x_4-x_2)(x_3-x_1)}{(x_2-x_1)\sqrt{x_3-x_1}\sqrt{x_4-x_1}\sqrt{x_3-x_2}\sqrt{x_4-x_2}},
\end{align}
where $C_{11}$ is the constant in Lemma~\ref{lem::low_tempe}. 
Combining~\eqref{eqn::cov_two_point} with~\eqref{eqn::two_point_finite_domain} gives~\eqref{eqn::correlation_one_plus_aux2} with $C_9=C_{10}C_{11}$. This completes the proof. 
\end{proof}

The remaining goal is to prove Lemma~\ref{lem::low_tempe}. 
\paragraph*{Ising fermionic observable for free boundary conditions} We will use the observable initially introduced in~\cite[Section~2]{IzyurovObservableFree}. We briefly recall its construction in our setup. 

We denote by $b_1^a$ the discrete outer normal pointing from $y_1^a$ to $y_1^a-\ii a$, by $b_2^a$ the discrete outer normal pointing from $y_2^a$ to $y_2^a-\ii a$, and by $b_3^a$ the corner edge pointing from $y_3^a-\ii a$ to $y_3^a-\ii a-\frac{\sqrt{2}a}{4}\exp(\frac{\ii \pi}{4})$. 
 For each oriented edge $e$, view it as a complex number, and associate another number $\varkappa(e)\in \mathbb{C}$ to it defined by
\[\varkappa(e):=\left(\frac{\ii e}{|e|}\right)^{-1/2},\]
where $e$ is interpreted as a complex number.
Note that $\varkappa(e)$ is defined up to a sign.
We define $F^{a}$ on $\tilde{V}(\overline{\Omega}_M^a)\setminus\{y_1^a-\ii a\}$, except for the midpoints on $([y_3^ay_4^a]-\ii a)$, as
\begin{equation}
	F^{a}(z):=
	\ii \varkappa(b_1^a)\frac{\sum_{S\in \text{Conf}(\overline{\Omega}_M^a;b_1^{\delta},z)}\left(\prod_{e\in S\setminus \big([y_3^ay_4^a]-\ii a\big)}w_e\right)\exp(-\ii W(S)/2)}{(\sqrt{2}-1)^{\frac{3}{2}}\cos \frac{\pi}{8}\times Z(\overline{\Omega}_M^a;\{y_1^a,y_3^a-\ii a\})}, 
\end{equation}
where $W(S)$ is defined as follows: $S$  can be decomposed into a union of loops and a path $\gamma$ from $y_1^a-\ii a$ to $z$ in such a way that no edge is traced twice, and the loops and $\gamma$ do not cross each other or themselves transversally; the number $W(S)$ is defined to be the winding of the path $\gamma$; the winding factor $\exp(\ii W(S)/2)$ does not depend on the decomposition of $S$. Note that $F^{a}$ is only defined up to a sign.

Define
\begin{align} \label{eqn::def_obser_free_con_1}
	F(z;\mathbb{H};x_1,x_3,x_4)=\frac{1}{\sqrt{\pi}}\times \frac{(x_4-x_1)(x_3-x_1)}{\sqrt{x_4-x_3}}\times \frac{\left(\frac{1}{x_4-x_1}+\frac{1}{x_3-x_1}\right)(z-x_1)-2}{\sqrt{z-x_3}\sqrt{z-x_4}(z-x_1)},\quad z\in \mathbb{H},
\end{align}
and 
\begin{align}\label{eqn::def_obser_free_con_2}
	F(z;y_1,y_3,y_4):= \vert(\varphi^{-1})'(z) \vert^{\frac{1}{2}}\times  F(\varphi^{-1}(z);\mathbb{H};x_1,x_2,x_3),\quad z\in [-M,M]\times [0,M]\setminus \big(\{-M,M\}\times \{0,M\}\big). 
\end{align}
Note that the function $F$ is defined up to a sign. 
\begin{lemma}{\textnormal{\cite[Proposition~1.1]{IzyurovObservableFree}}}
	We have the following convergence of the scaled  observable (here $F^a$ is viewed as a function on midpoints of $\overline{\Omega}_M^a$):
	\begin{align*}
		2^{-\frac{1}{4}}a^{-\frac{1}{2}} F^a(\cdot) \to F(\cdot;y_1,y_3,y_4) \quad \text{locally uniformly as }a\to 0,
	\end{align*}
	where both sides are defined up to a sign and where $F(\cdot;y_1,y_3,y_4)$ is defined by~\eqref{eqn::def_obser_free_con_1} and~\eqref{eqn::def_obser_free_con_2}. 
\end{lemma}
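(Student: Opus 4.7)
The plan is to prove the stated convergence by the discrete-complex-analysis scheme now standard for Ising fermionic observables, following Smirnov, Chelkak-Smirnov, and Izyurov. First I would verify that $F^a$ is s-holomorphic on $\tilde{V}(\overline{\Omega}_M^a)$ away from the marked points $y_1^a, y_3^a-\ii a$ and the wired strip $[y_3^a y_4^a]-\ii a$. Writing $F^a$ as a sum over configurations in $\mathrm{Conf}(\overline{\Omega}_M^a;b_1^a,z)$ weighted by the spin-weight $\prod w_e$ and the fermionic phase $\exp(-\ii W(S)/2)$, the combinatorial bijection between configurations that differ at a single vertex (swap/flip of local connections) produces exact cancellations equivalent to the requirement that the projections $\mathrm{Proj}\bigl[F^a(z);\sqrt{\eta(e)}\mathbb{R}\bigr]$ agree on the two endpoints of each edge $e$; this is the s-holomorphicity relation of \cite{ChelkakSmirnovIsing}.

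Second I would identify the Riemann-Hilbert boundary values of $F^a$. On the free portion of $\partial\overline{\Omega}_M^a$, the spin-sum combinatorics force $F^a(z)\in\sqrt{\nu(z)}\,\mathbb{R}$, where $\nu(z)$ is the inward discrete normal, while on the dual strip $[y_3^a y_4^a]-\ii a$ (whose edge weights were removed in the definition of $Z$), the observable satisfies the orthogonal condition $F^a(z)\in\sqrt{\ii\nu(z)}\,\mathbb{R}$. Near $y_1^a$, the insertion of the outer normal $b_1^a$ produces a prescribed singularity of order $a^{1/2}/(z-y_1)$, and near the boundary-condition changes at $y_3^a$ the corner edge $b_3^a$ dictates a $(z-y_3)^{-1/2}$ branch (with analogous behavior at $y_4^a$). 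These are exactly the Riemann-Hilbert data set up in \cite[Section~2]{IzyurovObservableFree}.

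Third I would establish precompactness of the family $\{2^{-1/4}a^{-1/2}F^a\}$ via the Kemppainen-Smirnov regularity theorem applied to s-holomorphic functions with fixed boundary conditions away from the singular points, and obtain uniform bounds near the singular points by comparison with discrete harmonic measure. Any subsequential limit $G$ is then a holomorphic function on $[-M,M]\times[0,M]$ satisfying the continuum Riemann-Hilbert data described above with the prescribed singularities and normalization. Uniqueness of the solution to this boundary value problem forces $G=F(\cdot;y_1,y_3,y_4)$. To check that the explicit formula \eqref{eqn::def_obser_free_con_1}--\eqref{eqn::def_obser_free_con_2} is indeed this solution, pull back to $\mathbb{H}$ via $\varphi^{-1}$, use the fact that the limit transforms as a $(1/2)$-differential, and verify directly that the right-hand side of \eqref{eqn::def_obser_free_con_1} is holomorphic on $\mathbb{H}$, has a simple pole at $x_1$ with the correct residue, is real on $(-\infty,x_3)\cup(x_4,\infty)$, purely imaginary on $(x_3,x_4)$, and decays at infinity.

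The main obstacle is the careful verification of the Riemann-Hilbert boundary conditions at the transition points $y_3^a$ and $y_4^a$, where the boundary type changes from free to wired: here one must match delicate winding contributions of the combinatorial configurations with the continuum branch structure of $1/\sqrt{(z-x_3)(z-x_4)}$, and one must pin down the universal prefactor $2^{-1/4}$ (as opposed to a subsequence-dependent scalar) from the corner-edge normalization. This matching, which is the technical heart of \cite[Proposition~1.1]{IzyurovObservableFree}, is what upgrades compactness and uniqueness into the stated quantitative scaling limit.
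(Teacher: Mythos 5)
The paper gives no proof of this statement at all: it is imported verbatim as Proposition~1.1 of~\cite{IzyurovObservableFree}, which is why the lemma header carries that citation. Your proposal is therefore not comparable to anything in the paper itself; rather, it is a reconstruction of the proof strategy of the cited reference, and as such it is a faithful outline of the standard Smirnov/Chelkak--Smirnov/Izyurov scheme: s-holomorphicity of $F^a$ away from the source and the wired strip via the local configuration bijections, identification of the discrete Riemann--Hilbert boundary data (with the sign of the boundary condition flipping across $[y_3^a y_4^a]-\ii a$ because the edge weights on that strip are removed from $Z$), precompactness from the regularity theory for s-holomorphic functions, and uniqueness of the continuum boundary value problem to pin down the limit as $F(\cdot;y_1,y_3,y_4)$. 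The genuinely hard points you flag --- the behavior at the free/wired transition points $y_3$, $y_4$, the boundary regularity needed to pass to the limit near the singularities, and extracting the universal constant $2^{-1/4}$ rather than a subsequence-dependent one --- are exactly the technical content of Izyurov's proof, and your sketch acknowledges rather than executes them; so as written it is a correct plan, not a complete argument. For the purposes of this paper that is immaterial, since the authors' intent is clearly to quote the result, not to reprove it; the only additional check the paper implicitly relies on (and handles in the \emph{next} lemma) is the extension of the locally uniform interior convergence to the boundary midpoint $b_2^a$, which is not covered by the interior statement you are proving and requires the separate boundary-regularity argument of~\cite[Lemma~4.8]{ChelkakIzyHolomorphic}.
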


\begin{lemma}
	We have the convergence
	\begin{align} \label{eqn::low_tempe_aux0}
	\lim_{a\to 0}	\vert 2^{-\frac{1}{4}} a^{-\frac{1}{2}} F^a(b_2^a) \vert= \vert F(y_2;y_1,y_3,y_4) \vert,
	\end{align}
where $F(\cdot;y_1,y_3,y_4)$ is defined by~\eqref{eqn::def_obser_free_con_1} and~\eqref{eqn::def_obser_free_con_2}. 
\end{lemma}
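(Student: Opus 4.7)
The plan is to extend the locally uniform interior convergence $2^{-1/4}a^{-1/2}F^a(\cdot) \to F(\cdot;y_1,y_3,y_4)$ to the outer-normal boundary edge $b_2^a$, taking advantage of the flatness of $\partial\Omega_M$ near $y_2$. This mirrors exactly the strategy used earlier in the paper for Smirnov's FK-Ising observable (see Lemma~\ref{lem::one_arm_bd_aux2}), where boundary values of the rescaled discrete observable were matched to boundary values of an explicit holomorphic limit.

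First, I would verify that the continuous limit extends continuously to $y_2$. Since $y_2\in\partial\Omega_M$ is distinct from $y_1,y_3,y_4$ and is a smooth boundary point of the rectangle $[-M,M]\times[0,M]$, the explicit formulas~\eqref{eqn::def_obser_free_con_1}--\eqref{eqn::def_obser_free_con_2} show that $F(\cdot;y_1,y_3,y_4)$ is smooth at $y_2$ and $|F(y_2;y_1,y_3,y_4)|<\infty$. In particular, $|F(\cdot;y_1,y_3,y_4)|$ has a well-defined continuous boundary value there.

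Second, because $y_2$ lies on a flat horizontal segment of $\partial\Omega_M$, the discrete boundary is likewise flat in a macroscopic neighborhood of $y_2^a$, so it satisfies the regularity assumption of \cite[Definition~3.14]{ChelkakIzyHolomorphic}. Together with the discrete s-holomorphicity of $F^a$ established in~\cite{IzyurovObservableFree}, this allows one to repeat verbatim the argument in \cite[Proof of Lemma~4.8]{ChelkakIzyHolomorphic}: the uniform interior convergence, combined with the local linear relations between $F^a$ at boundary outer normals and at nearby interior midpoints/corners, upgrades to the convergence of $F^a$ at $b_2^a$. Since the boundary is flat and $b_2^a$ is a true perpendicular outer normal, the change of local frame is trivial and the relevant prefactor $\varkappa(b_2^a)$ contributes only a bounded factor that matches the boundary value of the continuous observable.

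The main (minor) obstacle is the usual sign ambiguity: both $F^a$ and $F$ are defined only up to a global sign, and the definition of $F^a(b_2^a)$ involves $\varkappa(b_1^a)$, so an absolute matching of the two sides requires some care. This is precisely why the statement is formulated with absolute values, which sidesteps the sign bookkeeping entirely. With this convention, the convergence $|2^{-1/4}a^{-1/2}F^a(b_2^a)|\to|F(y_2;y_1,y_3,y_4)|$ follows from the interior convergence applied at a midpoint within $O(a)$ of $y_2^a$, together with the flat-boundary regularity argument above.
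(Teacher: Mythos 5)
Your argument is correct and matches the paper's proof, which likewise invokes the flatness of the boundary near $y_2^a$ to verify the regularity assumption of \cite[Definition~3.14]{ChelkakIzyHolomorphic} and then repeats the argument of \cite[Proof of Lemma~4.8]{ChelkakIzyHolomorphic} to upgrade the interior convergence to the boundary point. Your additional remarks on the sign ambiguity and the continuity of the limiting observable at $y_2$ are consistent with, though more detailed than, what the paper records.
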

\begin{proof}
	The boundary of $\Omega_M^a$ near $y_2^a$ satisfies the regularity assumption in~\cite[Definition~3.14]{ChelkakIzyHolomorphic}. Thus, we can repeat the argument in~\cite[Proof of Lemma~4.8]{ChelkakIzyHolomorphic} to obtain the desired convergence on the boundary. 
\end{proof}
Now, we are ready to prove Lemma~\ref{lem::low_tempe}.
\begin{proof}[Proof of Lemma~\ref{lem::low_tempe}]
According to~\cite[Eq.~(1.7)]{IzyurovObservableFree}, we have 
\begin{align}\label{eqn::low_tempe_aux1}
	\vert F^a(b_2^a)\vert =\frac{1}{(\sqrt{2}-1)^{\frac{1}{2}}\cos\frac{\pi}{8}} \times \frac{Z(\overline{\Omega}_M^a;\{y_1^a,y_2^a\})}{Z(\overline{\Omega}_M^a;\{y_1^a,y_3^a-\ii a\})}.
\end{align}
Combining~\eqref{eqn::low_tempe_aux0} with~\eqref{eqn::low_tempe_aux1} gives~\eqref{eqn::low_tempe} and completes the proof. 
\end{proof}

\bigskip
\paragraph*{Declaration of competing interest.} The authors declare that they have no known competing financial interests or personal relationships that could have appeared
to influence the work reported in this paper.

\paragraph*{Acknowledgments.}
The authors thank Xin Sun and Baojun Wu for explaining their work~\cite{IntegraCLE}.
F. C.'s research is supported by NYU Abu Dhabi via a personal research grant. Y. F. thanks NYUAD for its hospitality during two visits in the fall of 2023 and of 2024.
The first visit was partially supported by the Short-Term Visiting Fund for Doctoral Students of Tsinghua University.

\end{document}